\documentclass[11pt]{article}

\makeatletter
\providecommand{\@msc}{}
\makeatother
\usepackage[margin=1in]{geometry}

\usepackage[
  backend=biber,
  style=alphabetic,
  maxalphanames=4,
  minalphanames=4,
  maxbibnames=99
]{biblatex}

\usepackage{amsmath,amssymb,amsthm}
\usepackage{hyperref}
\usepackage[pdftex,dvipsnames]{xcolor}
\definecolor{rptupink}{RGB}{209,56,150}

\hypersetup{
    colorlinks=true,
    citecolor=rptupink,
    linkcolor=rptudunkelblau,
    urlcolor=rptuviolett
}

\usepackage{amsmath,amssymb,amsthm}
\usepackage{hyperref}
\numberwithin{equation}{section} % Eqns numbered per section
\allowdisplaybreaks              % Allow multi-line displays to break across pages
\usepackage{mathrsfs}
\definecolor{rptu1}{RGB}{4,44,88}
\definecolor{rptu2}{RGB}{106,178,231}
\definecolor{rptublaugrau}{RGB}{80,114,137}
\definecolor{rptugruengrau}{RGB}{119,182,186}
\definecolor{rptudunkelblau}{RGB}{4,44,88}
\definecolor{rptuhellblau}{RGB}{106,178,231}
\definecolor{rptudunkelgruen}{RGB}{0,107,107}
\definecolor{rptuhellgruen}{RGB}{38,208,124}
\definecolor{rptuviolett}{RGB}{76,53,117}
\definecolor{rptupink}{RGB}{209,56,150}
\definecolor{rpturot}{RGB}{227,27,76}
\definecolor{rptuorange}{RGB}{255,162,82}
\definecolor{rptuschwarz}{RGB}{0,0,0}
\definecolor{rptuweiss}{RGB}{255,255,255}
\definecolor{darkpurple}{RGB}{75,0,130}

\usepackage{nameref}

\usepackage{cancel}

\title{Langevin-type dynamics and hypocoercivity on \texorpdfstring{$\mathrm{\textbf{SE}}(3)$}{SE(3)}}

\author{
  Martin Grothaus\\
  \texttt{grothaus@rptu.de}\\[1em]
  Andrea V. Hurtado-Quiceno\\
  \texttt{avanessa.hquiceno@edu.rptu.de}}

\usepackage{enumitem}

\def\A{A}
\def\L{L}
\def\I{I}
\def\S{S}
\def\B{B}
\def\Bor{\mathscr{B}}

\def\D{\mathcal{D}}

\def\SE{\textbf{$\mathrm{SE}$}}
\def\SO{\textbf{$\mathrm{SO}$}}

\def\P{\mathrm{\Pi}}
\def\trace{\mathrm{\trace}}

\def\Hi{H}

\usepackage{cancel}
\newtheorem{theorem}{Theorem}[section]
\newtheorem{lemma}[theorem]{Lemma}

\newtheorem{proposition}[theorem]{Proposition}
\newtheorem{corollary}[theorem]{Corollary}

\newtheorem{definition}[theorem]{Definition}

\newtheorem{remark}[theorem]{Remark}

\theoremstyle{definition}

\newtheorem{observation}[theorem]{Observation}

\newtheorem{notation}[theorem]{Notation}

\numberwithin{equation}{section}

\usepackage{xargs}                      % Use more than one 
\usepackage[colorinlistoftodos,prependcaption,textsize=tiny]{todonotes}
\newcommandx{\unsure}[2][1=]{\todo[linecolor=red,backgroundcolor=red!25,bordercolor=red,#1]{#2}}
\newcommandx{\change}[2][1=]{\todo[linecolor=blue,backgroundcolor=blue!25,bordercolor=blue,#1]{#2}}
\newcommandx{\info}[2][1=]{\todo[linecolor=OliveGreen,backgroundcolor=OliveGreen!25,bordercolor=OliveGreen,#1]{#2}}
\newcommandx{\improvement}[2][1=]{\todo[linecolor=Plum,backgroundcolor=Plum!25,bordercolor=Plum,#1]{#2}}
\newcommandx{\thiswillnotshow}[2][1=]{\todo[disable,#1]{#2}}

\newcommand{\AuthorInfo}[3]{%
  \noindent\textbf{#1}\\#2\\\textit{E-mail address:} \texttt{#3}\vspace{1em}\\
}

\date{\today}

\begin{document}

\maketitle

\begin{abstract}
\noindent
We construct hypocoercive Kolmogorov-type operators on the Euclidean motion group $\SE(3)=\mathbb{R}^3\rtimes\SO(3)$ using ZXZ Euler angles and study their long-time behaviour using the abstract hypocoercivity method. The operator $\L=\S-\A$ decomposes into a symmetric part $\S$, corresponding to a degenerate diffusion on the rotation 
component $\SO(3)$, and an antisymmetric part $\A$, which captures transport and torque effects induced by the Lie group structure. The associated invariant probability measure is $\mu_\Phi=\gamma_\Phi\otimes\nu_3$, where $\gamma_\Phi$ is a probability measure on $\mathbb{R}^3$ with density $Z(\Phi)^{-1}e^{-\Phi}$ with respect to the Lebesgue measure, 
and $\nu_3$ is the normalized Haar measure on $\SO(3)$.~\\

\noindent
As a consequence, if $\Phi\in C^\infty(\mathbb{R}^3)$ is bounded from below, $\gamma_\Phi$ satisfies a Poincar\'e inequality with constant 
$\Lambda>0$, and there exists $c<\infty$ such that
\begin{equation*}
    |\nabla_\xi^2\Phi(\xi)|
    \leq c\bigl(1+|\nabla_\xi\Phi(\xi)|\bigr),
    \quad \xi\in\mathbb{R}^3,
\end{equation*}
then the semigroup generated by the closure of $\L$ converges 
exponentially fast to equilibrium in $L^2(\SE(3),\mu_\Phi)$.~
%As a consequence, under suitable assumptions \improvement{Instead of suitable assumptions, just mention the assumptions. $\Phi \in C^{\infty}(\mathbb{R}^3)$} on the potential $\Phi$, the semigroup generated by the closure of $\L$ converges exponentially fast to equilibrium in $L^{2}(\SE(3),\mu_{\Phi})$.~
\end{abstract}

\begingroup
\renewcommand{\thefootnote}
\footnotetext{%
\textbf{Keywords:} 
Hypocoercivity, Lie groups, Special Euclidean group,
Kolmogorov operators.

%\improvement{Discuss the Keywords and AMS.}
\textbf{2020 MSC:}
primary 53C17;  secondary 37A25, 58J65.}
\endgroup

%----------- This show us the table of contents...
%\tableofcontents
%\bigskip

%%%%%%%%%%%%%%%%%%%%%%%%%%%%%%%%%%%%%%%%%%%%%%%%%%%%%%%%%%%%%%%%%%%%%%%%%%%%%%%%%%%%%%%%%%%%%%%%%%%%%%%%%%%%%%%%%%%%%%%%%
\section{Introduction}

In this article, we study hypocoercive Kolmogorov-type operators on the Lie group $\SE(3)$, the group of orientation-preserving Euclidean motions in  $\mathbb{R}^3$. This Lie group carries a natural sub-Riemannian structure, making it a canonical state space for degenerate diffusion processes arising in kinetic theory, geometric mechanics, and stochastic dynamics on manifolds.

%The term “hypocoercivity” was introduced by Cedric Villani in \cite{villani2009hypocoercivity} after Thierry Gallay suggested it to him, based on the analogy with the term “hypoellipticity”. 
In this setting, the dissipation operator acts only on a subset of variables, and additional structure is required to satisfy the Hörmander-type commutator relations and the abstract hypocoercivity method. 

In the geometric setting, however, the commutator structure required by Villani's approach is often not directly available. Grothaus and Klar observed this phenomenon in their study of kinetic equations on $\mathbb{R}^2 \times \mathbb{S}^1$ in \cite{GrotKlar}. This motivates the use of the abstract Hilbert space framework developed by Dolbeault, Mouhot and Schmeiser \cite{DMS15}, which is particularly well suited to geometric settings.  Grothaus and Stilgenbauer \cite{GROT-STIL-2014}, later translated the framework to the Kolmogorov backward setting, with particular emphasis on domain questions and essential m-dissipativity.

Beyond its intrinsic geometric interest, stochastic dynamics on Lie groups have recently attracted considerable attention in geometric mechanics and stochastic analysis. In particular, Langevin-type diffusions on matrix Lie groups provide natural models for rigid body motion subject to noise and damping, and are also increasingly relevant in geometric deep learning and optimization on manifolds. This connection motivates, in particular, the exploration of hypocoercivity as a tool for understanding convergence properties of stochastic gradient-based algorithms evolving on Lie groups and related homogeneous spaces. Recent developments include symplectic techniques for stochastic differential equations on reductive Lie groups and their associated Langevin dynamics; see, for example, Luesink and Street \cite{LUESINK2026114034}. These developments further motivate the study of convergence to equilibrium for degenerate diffusion processes on Lie groups, as well as potential applications of hypocoercive techniques to geometric optimization and learning algorithms.

The motivation for the present work is to study the long-time behaviour of solutions to a Langevin-type stochastic differential equation on the Euclidean motion group $\SE(3)=\mathbb{R}^3\rtimes \SO(3)$.~Let $(\xi_t,R_t)_{t\geq 0}$ be a stochastic process taking values in $\SE(3)$, where $\xi_t\in\mathbb{R}^3$ denotes the translation component and $R_t\in\SO(3)$ the rotation component. We consider the intrinsic Stratonovich SDE 

\begin{equation*}
\begin{aligned}
d\xi_t &= v_{3}(R_t) \,dt,\\
dR_t
&=
\frac{1}{2}\left(\nabla_{\xi} \Phi\left(\xi_t\right) \cdot v_{2}\left(R_t\right) \widetilde{X}_1\left(R_t\right)- \nabla_{\xi} \Phi\left(\xi_t\right) \cdot v_{1}\left(R_t\right) \widetilde{X}_2\left(R_t\right)\right) dt +\sigma\sum_{i=1}^{3}\widetilde{X}_i (R_t) \,dB_t^i,
\end{aligned}
\end{equation*}
where $\sigma \in (0, \infty)$, $\widetilde{X}_1, \widetilde{X}_2, \widetilde{X}_3$ are the right-invariant rotation vector fields on $\SO(3)$. The vectors $v_{i}(R) := Re_{i}$, where $R^{} \in \SO(3)$ is parametrized using the ZXZ Euler angles $\alpha, \beta, \gamma$, with $0 \leq \alpha, \gamma \leq 2\pi$ and $0 \leq \beta \leq \pi$ (see \cite[Section 9.4]{Chi-Alex}), applied to the canonical orthonormal basis $e_{i}, i = 1, 2, 3$, of $\mathbb{R}^3$ and $(B_t^1, B_t^2, B_t^3)$ is a standard Brownian motion in $\mathbb{R}^3$, see \cite{McKean1960}. Furthermore,  $\Phi \in C^\infty(\mathbb{R}^3)$ is a real valued potential such that there exists a finite normalization constant $$Z(\Phi):=\int_{\mathbb{R}^3}e^{-\Phi(\xi)} d\xi <\infty.$$

The diffusion acts only in the rotation directions, while the translational component is transported with the help of the antisymmetric operator $\A$.~ Consequently, the noise is degenerate, and the resulting generator is not coercive. Applying the Stratonovich-Itô correspondence yields the Kolmogorov operator $\L = \S - \A$, acting on smooth compactly supported functions $f \in \mathcal{D} := C_c^\infty(\SE(3))$, where the symmetric operator is given by
\begin{equation*}
\begin{aligned}
\S &=  \frac{\sigma^2}{2}\Delta_{\SO(3)} = -\frac{\sigma^2}{2}
\sum_{i=1}^{3}
\widetilde{X}_{i}^2,
\end{aligned}
\end{equation*}
and the antisymmetric operator is
\begin{equation*}
\A = - \widetilde{X}_6 -\frac{1}{2}
\Big((\nabla_{\xi}\Phi\cdot v_{2})\widetilde{X}_1  - (\nabla_{\xi}\Phi\cdot v_{1})\widetilde{X}_2 \Big).
\end{equation*}

The natural invariant probability measure on $\SE(3)$ is the product measure
\begin{equation*}
    \mu_{\Phi}:=\gamma_{\Phi}\otimes\nu_3,
\end{equation*}
where $d\gamma_{\Phi}(\xi):= Z(\Phi)^{-1}e^{-\Phi(\xi)}\,d\xi$ is the weighted probability measure on $\mathbb{R}^3$, and $\nu_3$ is the
normalized Haar measure on $\SO(3)$.~Thus,
$\mu_{\Phi}(\SE(3))=1$.~

This article demonstrates exponential convergence to equilibrium for the semigroup generated by the closure of $(\L,\mathcal{D})$.~Specifically, under appropriate conditions on the potential $\Phi: \mathbb{R}^3 \to \mathbb{R}$, including a Poincaré inequality for the measure $\gamma_\Phi$ and a growth condition on its Hessian, we confirm all requirements of the abstract hypocoercivity framework provided in \cite{GROT-STIL-2014}. Consequently, the closure of $(\L,\mathcal{D})$ generates a strongly continuous contraction semigroup $(T_t)_{t\geq0}$ on $L^2(\SE(3),\mu_\Phi)$, and there exist constants $0 < \kappa_1,\kappa_2 < \infty$ such that
\begin{equation*}
\|T_t f-(f,1)_{\Hi}\|_{\Hi} \leq
\kappa_1 e^{-\kappa_2 t}
\|f-(f,1)_{\Hi}\|_{\Hi}, \quad t\geq0.
\end{equation*}

This work builds on our recent construction of hypocoercive Langevin dynamics on the Euclidean motion group $\SE(2)$, see \cite{GHQ26}, and represents the first generalization of that framework. 

Moving from $\SE(2)$ to $\SE(3)$ introduces significant geometric complexity: unlike the abelian group $\SO(2) \simeq \mathbb{S}^1$, the rotation group $\SO(3)$ is non-commutative. This requires a more careful construction of the antisymmetric operator, which must generate the full Lie algebra $\mathfrak{se}(3)$ and remain compatible with the invariant probability measure $\mu_{\phi}$. In contrast, the symmetric operator involves only the rotation variables, i.e., $\S = \frac{\sigma^2}{2}\Delta_{\SO(3)}$.~Hence, the main challenge is to constructs an antisymmetric operator $A$ which is compatible with the invariant measure $\mu_\Phi$, verifies the corresponding macroscopic coercivity, and for which the necessary boundedness estimates within the abstract hypocoercivity method can be established.

Geometrically, this model extends the fiber lay-down dynamics on $\mathbb{R}^2 \times \mathbb{S}^1$ studied in \cite{GrotKlar} to the Lie group $\SE(3)$.~ While the fiber lay-down process represents orientation by a point on the sphere $\mathbb{S}^1$, our setting tracks the full orientation matrix in $\SO(3)$.~As a result, the state space is the Euclidean motion group $\SE(3) = \mathbb{R}^3 \rtimes \SO(3)$, resulting in a genuinely non-commutative geometric framework.

The paper is organized as follows. Section 2 reviews the geometric structure of $\SE(3)$ and summarizes the Hilbert space hypocoercivity method of \cite{GROT-STIL-2014}. In Section 3 we construct hypocoercive Kolmogorov operators on the Euclidean motion group $\SE(3)$ and verify them to satisfy all assumptions of the abstract hypocoercivity method. The main results of this article are as follows:

\begin{itemize}

   \item We construct a splitting $\L = \S - \A$ on $\mathcal{D} = C_c^\infty(\SE(3))$, where $\S$ is symmetric but degenerate and $\A$ is antisymmetric, both with respect to the probability measure $$\mu_{\Phi} = \gamma_\Phi\otimes\nu_3,$$
   such that Hörmander's condition is fulfilled.

   \item We proof essential m-dissipativity of $(L, \mathcal{D})$ in $L^2\left(\SE(3), \mu_{\Phi}\right)$.
   
   \item We verify all hypotheses of the abstract hypocoercivity method. As a consequence, we establish exponential convergence to equilibrium of the corresponding stochastic dynamics in $L^2\left(\SE(3), \mu_{\Phi}\right)$, assuming natural geometric and analytic conditions on the potential $\Phi$.~
\end{itemize}
%========================
% Preliminaries
%========================

\section{Preliminaries}

The Lie group $\SE(3)$ describes rigid motions in three dimensions and provides the natural geometric setting for coupled translational and rotation dynamics. An element of $\SE(3)$ consists of a position variable together with an orientation, and the corresponding invariant vector fields encode the interaction between translation and rotation. This structure plays a fundamental role in constructing the generator of the Kolmogorov backward operator, below, which naturally generalizes the $\SE(2)$ case in \cite{GHQ26}.

\begin{notation}\label{Notation_1}
The superscript ${}^{\top}$ denotes the transpose for matrices. In this article, vectors in $\mathbb{R}^3$ are considered as column matrices ($3 \times 1$ matrices). For $v, w \in \mathbb{R}^3$, the Euclidean inner product is denoted by $v \cdot w := v^{\top} w$, and the Euclidean norm by $|v|$.~

For matrices $A \in \mathbb{R}^{3 \times 3}$, $|A|$ denotes the Frobenius norm and $\operatorname{tr}(A)$ denotes the trace. For $v, w \in \mathbb{R}^3$, the outer product is defined as follows $v\otimes w:=vw^{\top}$, that is, $(v\otimes w)_{ij}=v_{i}w_j$.~
\end{notation}

%%%%%%%%%%%%%%%%%%%%%%%%
\subsection{Geometrical Preliminaries and Notations}\label{Geometrical-Preliminaries}

\begin{definition}[Euclidean Motion Lie group]
The Euclidean motion group $\SE(3)$ is the semidirect product of $\mathbb{R}^{3}$ with the special orthogonal group $\SO(3)$, denoted by $\SE(3)= \mathbb{R}^3 \rtimes \SO(3)$, and for elements $(\xi, R_1), (\eta, R_2 ) \in \SE(3)$ the group law $\circ$ is given by
\begin{equation*}
    (\xi, R_1) \circ (\eta, R_2 ) = (\xi+R_1\eta, R_1 R_2), \quad R_1,R_2 \in \SO(3), \quad \xi,\eta \in \mathbb{R}^3. 
\end{equation*}
\end{definition}

The geometric construction of the Kolmogorov operator relies on the right-invariant vector fields of $\SE(3)$.~Following \cite[Section 9.4]{Chi-Alex}, the rotation vector fields on $\SO(3)$ are denoted by $X_1, X_2, X_3$, while the translational directions, denoted by $\widetilde{X}_4, \widetilde{X}_5, \widetilde{X}_6$, are obtained through the action of $R^{\top}$ on the Euclidean gradient, where 

\begin{equation*}
\begin{aligned}
R&=\left(\begin{array}{ccc}
c_\alpha c_\gamma-c_\beta s_\alpha s_\gamma & -c_\alpha s_\gamma-c_\beta c_\gamma s_\alpha & s_\alpha s_\beta \\
c_\gamma s_\alpha+c_\alpha c_\beta s_\gamma & c_\alpha c_\beta c_\gamma-s_\alpha s_\gamma & -c_\alpha s_\beta \\
s_\beta s_\gamma & c_\gamma s_\beta & c_\beta
\end{array}\right).
%R^{\top} &=  \left(\begin{array}{ccc}
%c_\alpha c_\gamma-c_\beta s_\alpha s_\gamma & c_\gamma s_\alpha+c_\alpha c_\beta s_\gamma & s_\beta s_\gamma \\
%-c_\alpha s_\gamma-c_\beta c_\gamma s_\alpha &  c_\alpha c_\beta c_\gamma-s_\alpha s_\gamma & c_\gamma s_\beta \\
%s_\alpha s_\beta & -c_\alpha s_\beta & c_\beta
%\end{array}\right).
\end{aligned}
\end{equation*}

The matrix $R^{} \in \SO(3)$ is parametrized using the ZXZ Euler angles $\alpha, \beta, \gamma$, with $0 \leq \alpha, \gamma \leq 2\pi$ and $0 \leq \beta \leq \pi$.~The notation $c_j$ and $s_j$ refers to $\cos(j)$ and $\sin(j)$, respectively, for $j \in \{\alpha, \beta, \gamma\}$.~This convention is maintained throughout the article for lengthy computations. Otherwise, trigonometric terms are written explicitly. The corresponding right-invariant vector fields on $\SE(3)$ are as follows:

\begin{equation}\label{eq:vector_fields_SE(3)_compactified_version}
\widetilde{X}_i :=\left\{\begin{array}{cc}
X_i
& \text { for } i=1,2,3 \\
\left(R^{\top} \nabla_{\xi}\right)_{i-3} & \text { for } i=4,5,6
\end{array}\right.,
\end{equation}

where $\left(\nabla_{\xi}\right)_i= \frac{\partial}{\partial\xi_i}$, $\nabla_{\xi}=(\partial_{\xi_{1}}, \partial_{\xi_{2}}, \partial_{\xi_{3}})^{\top}$.~Therefore, the vector fields given in Equation \eqref{eq:vector_fields_SE(3)_compactified_version} are given in coordinates as follows
%\begin{center}
\begin{equation}\label{eq:vector_fields_on_SE(3)}
\begin{aligned}
\widetilde{X}_1 &=-\cot \beta \sin \gamma \frac{\partial}{\partial \gamma}+\frac{\sin \gamma}{\sin \beta} \frac{\partial}{\partial \alpha}+\cos \gamma \frac{\partial}{\partial \beta}, \\
\widetilde{X}_2 &=-\cot \beta \cos \gamma \frac{\partial}{\partial \gamma}+\frac{\cos \gamma}{\sin \beta} \frac{\partial}{\partial \alpha}-\sin \gamma \frac{\partial}{\partial \beta},\\
\widetilde{X}_3 &=\frac{\partial}{\partial \gamma},\\
\widetilde{X}_4 &= (c_\alpha c_\gamma-c_\beta s_\alpha s_\gamma) \nabla_{\xi_1} +  (c_\gamma s_\alpha+c_\alpha c_\beta s_\gamma) \nabla_{\xi_2} + s_\beta s_\gamma \nabla_{\xi_3},\\
\widetilde{X}_5 &= (-c_\alpha s_\gamma-c_\beta c_\gamma s_\alpha)\nabla_{\xi_1} + (c_\alpha c_\beta c_\gamma-s_\alpha s_\gamma) \nabla_{\xi_2} +  c_\gamma s_\beta  \nabla_{\xi_3},\\
\widetilde{X}_6 &= s_\alpha s_\beta \nabla_{\xi_1} -c_\alpha s_\beta \nabla_{\xi_2} + c_\beta \nabla_{\xi_3}.      
\end{aligned}
\end{equation}
%\end{center}

\begin{remark}
The expressions in Equation \eqref{eq:vector_fields_on_SE(3)} provide the coordinate representations of the right-invariant vector fields in ZXZ Euler angles, valid for $0<\beta<\pi$.~The apparent singularities at $\beta=0$ and $\beta=\pi$, due to the factors $1 / \sin \beta$ and $\cot \beta$, result from the Euler-angle parametrization, not from the vector fields themselves. The fields $\widetilde{X}_1, \widetilde{X}_2, \widetilde{X}_3$ are globally defined, smooth, right-invariant vector fields on $\operatorname{SO}(3)$.~The sets $\{\beta=0\}$ and $\{\beta=\pi\}$ have zero measure with respect to the Haar measure $\nu_3$ on $\SO(3)$, so the coordinate expressions are well defined $\nu_3$-almost everywhere for the integral computations below.    
\end{remark}

The commutator relations describing the Lie algebra structure of $\SE(3)$ are given by
\begin{equation}\label{eq:commutator-relations}
\begin{aligned}
\relax[\widetilde{X}_i, \widetilde{X}_j]  & =\varepsilon_{i j k} \widetilde{X}_k, \quad [ \widetilde{X}_i, \widetilde{X}_{j+3}]   =\varepsilon_{i j k} \widetilde{X}_{k+3}, \quad  i, j, k \in\{1,2,3\}, \\
%[\widetilde{X}_i^r, \widetilde{X}_{j+3}^r]  & =\varepsilon_{i j k} \widetilde{X}_{k+3}^r, & i, j, k \in\{1,2,3\},\\
&[\widetilde{X}_{i+3}, \widetilde{X}_{j+3}]=0, \quad i, j \in\{1,2,3\}.
\end{aligned}
\end{equation}

Let us define 
\begin{equation}\label{def:v_{i}_and_w_{i}}
w_{i} := v_{i} \cdot \nabla_{\xi}, \quad \text{where} \quad v_{i} := Re_{i}, \quad \text{for} \quad i=1,2,3.  
\end{equation}

According to Notation \ref{Notation_1}, the inner product should be interpreted as $w_{i} = v_{i} \cdot \nabla_{\xi} = v_{i}^{\top}\nabla_{\xi}$.~The vectors $v_{i}$ form an orthonormal frame in $\mathbb{R}^3$ defined by the rotation matrix $R \in \SO(3)$.~

\begin{remark}\label{rem:important-relations}
We observe that 

\begin{equation}
w_{i}= (R e_{i}) \cdot \nabla_{\xi} = \widetilde{X}_{i+3}, \quad i=1,2,3. \end{equation}
\end{remark}

\begin{observation}\label{observation_1}
For the vectors $v_{i}= Re_{i}$ defined above in Equation \eqref{def:v_{i}_and_w_{i}}, the following expressions hold:
\begin{equation}
v_{1} \times v_{2} =v_{3}, \quad v_{1} \times v_{3}= - v_{2}, \,\, \text{and} \,\, v_{2} \times v_{3}=v_{1}.  
\end{equation}

Here, $\times$ denotes the standard cross product in $\mathbb{R}^3$.~We use these expressions throughout the paper.
\end{observation}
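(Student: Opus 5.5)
The three identities in Observation \ref{observation_1} follow from a single structural fact: $v_i = Re_i$ are the columns of a matrix $R\in\SO(3)$. They therefore form a positively oriented orthonormal frame, and any such frame satisfies the same cross-product table as the canonical basis. No Euler-angle computation is needed, although it can serve as a check.

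First, orthonormality. Since $R^\top R = I$, we have $v_i\cdot v_j = e_i^\top R^\top R e_j = \delta_{ij}$.

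Second, the cross product is rotation-equivariant: $(Ra)\times(Rb) = \det(R)\,R(a\times b)$ for all $a,b\in\mathbb{R}^3$. To see this, fix an arbitrary $c$ and compute the scalar triple product
\[
(Ra\times Rb)\cdot Rc=\det[Ra,Rb,Rc]=\det(R)\det[a,b,c]=\det(R)\,(a\times b)\cdot c .
\]
Writing $(a\times b)\cdot c = R(a\times b)\cdot Rc$, which holds because $R$ is orthogonal, and using that $Rc$ ranges over all of $\mathbb{R}^3$, we get the claimed identity.

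Since $\det R=1$ on $\SO(3)$, equivariance together with $e_1\times e_2=e_3$, $e_1\times e_3=-e_2$ and $e_2\times e_3=e_1$ gives directly
\[
v_1\times v_2=R(e_1\times e_2)=Re_3=v_3,\qquad v_1\times v_3=R(e_1\times e_3)=-Re_2=-v_2,\qquad v_2\times v_3=R(e_2\times e_3)=Re_1=v_1 .
\]

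The only point needing care is the orientation, i.e.\ that $\det R=+1$ rather than $-1$; this fixes the signs. It holds by definition of $\SO(3)$. For the explicit ZXZ matrix in the paper, one can confirm it by noting that $R=R_z(\alpha)R_x(\beta)R_z(\gamma)$ is a product of three elementary rotations, each of determinant $1$. As an optional consistency check, one can also verify $v_1\times v_2=v_3$ directly from the displayed columns. For instance, the third component of $v_1\times v_2$ is
\[
(c_\alpha c_\gamma-c_\beta s_\alpha s_\gamma)(c_\alpha c_\beta c_\gamma-s_\alpha s_\gamma)+(c_\alpha s_\gamma+c_\beta c_\gamma s_\alpha)(c_\gamma s_\alpha+c_\alpha c_\beta s_\gamma),
\]
which simplifies to $c_\beta$, matching the third component of $v_3$.
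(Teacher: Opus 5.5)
Your proof is correct. The paper gives no argument for this observation; it simply asserts the identities, implicitly relying on the fact that the columns of a matrix in $\mathrm{SO}(3)$ form a positively oriented orthonormal frame. Your equivariance identity $(Ra)\times(Rb)=\det(R)\,R(a\times b)$, combined with $\det R=1$, makes exactly this justification explicit. Your side checks also agree with the displayed matrix: it does factor as $R=R_z(\alpha)R_x(\beta)R_z(\gamma)$, and the third component of $v_1\times v_2$ does reduce to $c_\beta$.
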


The observation above gives us the following important relationship for the vector field 
\begin{equation}\label{X_6}
\widetilde{X}_6= v_{3} \cdot \nabla_{\xi}= \left( v_{1} \times v_{2} \right) \cdot \nabla_{\xi}, 
\end{equation}    

which is useful for constructing the antisymmetric operator in Proposition \ref{Anti_Construction}, below.

\begin{lemma}\label{lemma_2_important} Let $v_{i}= R e_{i}$ for $i=1,2,3$.~ Then
\begin{equation*}
\widetilde{X}_1\left(v_{2}\right)=v_{3}, \quad \widetilde{X}_1\left(v_{3}\right)=-v_{2}, \quad \widetilde{X}_2\left(v_{1}\right)=-v_{3}, \quad \widetilde{X}_2\left(v_{3}\right)=v_{1}, \quad \widetilde{X}_3\left(v_{3}\right)=0 .    
\end{equation*}

These identities follow by direct differentiation of the explicit expressions of the vectors $v_{i}$ in Euler angles.
\end{lemma}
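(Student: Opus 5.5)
The plan is to prove a single matrix identity from which all five relations follow at once. The identity is
\begin{equation*}
\widetilde{X}_i R = R\,E_i, \qquad i=1,2,3,
\end{equation*}
where $\widetilde{X}_i$ acts entrywise on $R$. Here $E_i$ is the generator of $\mathfrak{so}(3)$ with $E_i x = e_i\times x$ for $x\in\mathbb{R}^3$. Once this is known, applying it to $e_j$ gives
\begin{equation*}
\widetilde{X}_i(v_j)=\widetilde{X}_i(R)e_j = R(e_i\times e_j).
\end{equation*}
Hence $\widetilde{X}_1 v_2 = Re_3=v_3$, $\widetilde{X}_1 v_3 = R(e_1\times e_3)=-v_2$, $\widetilde{X}_2 v_1 = R(e_2\times e_1)=-v_3$, $\widetilde{X}_2 v_3 = Re_1 = v_1$ and $\widetilde{X}_3 v_3 = R(e_3\times e_3)=0$. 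These are exactly the claimed identities, and they match Observation \ref{observation_1}.

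The ZXZ parametrization factors as $R = R_z(\alpha)R_x(\beta)R_z(\gamma)$, where $R_z,R_x$ are the elementary rotations about $e_3$ and $e_1$; multiplying these out reproduces the displayed matrix for $R$. Since $\widetilde{X}_3=\partial_\gamma$ and $R_z'(\gamma)=R_z(\gamma)E_3$, we get $\partial_\gamma R = R E_3$ immediately.

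For $\widetilde{X}_1$ and $\widetilde{X}_2$ I would compute the three partial derivatives of $R$ in the form $R\cdot(\text{generator})$. From the factorization,
\begin{align*}
R^{\top}\partial_\gamma R &= E_3,\\
R^{\top}\partial_\beta R &= R_z(\gamma)^{\top}E_1R_z(\gamma) = \cos\gamma\,E_1 - \sin\gamma\,E_2,\\
R^{\top}\partial_\alpha R &= (R_x(\beta)R_z(\gamma))^{\top}E_3\,R_x(\beta)R_z(\gamma) = \sin\beta\sin\gamma\,E_1+\sin\beta\cos\gamma\,E_2+\cos\beta\,E_3 .
\end{align*}
The conjugations use $Q^{\top}E_kQ = E_{Q^{\top}e_k}$ for $Q\in\SO(3)$. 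The third line is just the third row of $R$, i.e.\ $R^{\top}e_3=(s_\beta s_\gamma, s_\beta c_\gamma, c_\beta)$. I then substitute these into the coordinate expressions \eqref{eq:vector_fields_on_SE(3)}.

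For $\widetilde{X}_1$, the $E_1$-coefficient is $\sin^2\gamma+\cos^2\gamma=1$. The $E_2$-coefficient is $\sin\gamma\cos\gamma-\cos\gamma\sin\gamma=0$. The $E_3$-coefficient is $\tfrac{\sin\gamma}{\sin\beta}\cos\beta-\cot\beta\sin\gamma=0$. So $\widetilde{X}_1R=RE_1$, and the analogous bookkeeping gives $\widetilde{X}_2R=RE_2$. Since these are identities between smooth functions on $0<\beta<\pi$, and both sides extend smoothly, no issue arises at the coordinate singularities.

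The only real obstacle is sign and ordering conventions. One must confirm that the paper's matrix is indeed $R_z(\alpha)R_x(\beta)R_z(\gamma)$ with the orientation of $E_i$ chosen so that $E_ix=e_i\times x$. One must also confirm that the displayed fields satisfy $\widetilde{X}_iR=RE_i$, rather than $E_iR$ or the transpose convention. If a sign mismatch appears, I would fall back on the brute-force route of differentiating the explicit columns $v_2,v_3,v_1$. For example, $\widetilde{X}_1 v_3$ uses $\partial_\alpha v_3=(c_\alpha s_\beta, s_\alpha s_\beta,0)$ and $\partial_\beta v_3=(s_\alpha c_\beta,-c_\alpha c_\beta,-s_\beta)$. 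Combining gives $(c_\alpha s_\gamma+s_\alpha c_\beta c_\gamma,\ s_\alpha s_\gamma-c_\alpha c_\beta c_\gamma,\ -s_\beta c_\gamma)=-v_2$, which is consistent with the structural argument.
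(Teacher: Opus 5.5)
Your argument is correct, and it takes a genuinely different route from the paper. The checks go through with the paper's conventions. The displayed matrix is exactly $R_z(\alpha)R_x(\beta)R_z(\gamma)$, and your expressions for $R^{\top}\partial_\gamma R$, $R^{\top}\partial_\beta R$ and $R^{\top}\partial_\alpha R$ are right. Substituting them into the coordinate formulas for $\widetilde{X}_1,\widetilde{X}_2,\widetilde{X}_3$ gives $\widetilde{X}_iR=RE_i$ (not $E_iR$) with the correct signs, so your brute-force fallback is not needed.

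The paper does only the brute-force computation. It asserts that the five identities follow by differentiating the components of $v_1,v_2,v_3$ in Euler angles. The cases $\widetilde{X}_1v_3=-v_2$ and $\widetilde{X}_2v_3=v_1$ are written out only later, in the proof of Lemma~\ref{lemma:4.19}. That route needs nothing beyond the coordinate formulas, but it must be redone entry by entry for each identity and does not explain the pattern.

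Your single matrix identity gives all nine relations $\widetilde{X}_iv_j=R(e_i\times e_j)$ at once. It also explains why they mirror the cross-product table in Observation~\ref{observation_1} and are consistent with the commutator relations \eqref{eq:commutator-relations}. Finally, it makes the geometry explicit: $\widetilde{X}_if(R)=\tfrac{d}{dt}\big|_{t=0}f(R\,e^{tE_i})$, so the displayed fields are generated by right multiplication. In the usual terminology they are left-invariant; the paper's name ``right-invariant'' is a naming convention and does not affect the lemma.
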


%------------------------------------------------------
%\newpage
\subsection{The abstract hypocoercivity method}\label{The_Hilbert_Method_Space}

In this section, we briefly recall the concepts of the abstract hypocoercivity method from \cite{GROT-STIL-2014}.

\noindent
\subsubsection*{Hypocoercivity Data (D)}\label{Hypocoercivity_Data_Overview}

The abstract hypocoercivity method is based on the following data conditions:

\begin{description}[leftmargin=1.3cm, labelindent=0cm, style=nextline]
\item[\textbf{(D1).}]  \textbf{Hilbert Space}:
The Hilbert space $\Hi = L^2(E, \mu)$, with $(E, \mathcal{F}, \mu)$ a probability space.

\item[\textbf{(D2).}] \textbf{Semigroup and Generator}: The operator $(\L, D(\L)) $ generates a strongly continuous semigroup  $(T_t)_{t \ge 0}$ on $\Hi$.~

\item[\textbf{(D3).}] \textbf{Core Property}:
$ \D \subset D(\L)$ is dense in $\Hi$ and a core for $(\L, D(\L))$.~

\item[\textbf{(D4).}] \textbf{Decomposition}:
There exist operators $\S$ and $ \A$ such that 
\begin{equation*}
\D \subset D(\S) \cap D(\A), 
\quad 
\L|_\D = \S - \A,
\end{equation*}
with $ (\S, D(\S))$ symmetric, $ (\A, D(\A))$ antisymmetric and closed on $\Hi$.~

\item[\textbf{(D5).}]\textbf{Projection}:
There exists an orthogonal projection $ \P : \Hi \to \Hi $ satisfying
\begin{equation*}
\P (\Hi) \subset D(\S), \quad \S \P = 0,
\end{equation*}
and
\begin{equation*}
\P (\D) \subset D(\A), \quad \A \P (\D) \subset D(\A).
\end{equation*}

Define
\begin{equation*}
\Pi_{\S} f := \Pi f + (f,1)_{\Hi}, \quad f\in \Hi.
\end{equation*}
%then $\Pi_S$ is the orthogonal projection onto $\ker(S)$.

\item[\textbf{(D6).}]\label{D6} \textbf{Invariant Measure}:
The measure $ \mu$ is invariant, i.e.,
\begin{equation*}
\int_E \L f \, d\mu = 0, \quad 
\text{for all} \quad  f \in \D.
\end{equation*}

\item[\textbf{(D7).}] \textbf{Conservativity}:
$1 \in D(\L)$ and $\L 1 = 0$.~
\end{description}

Henceforth, we denote the core by $\D$, and the domain of the operator $\L$ by $D(\L)$.~
%---------------------------------
%\textbf{The hypocoercivity conditions}\\
\subsubsection*{Hypocoercivity Assumptions (H)}\label{Hypocoervicity_Assumptions_SE(n)}

Under the above data conditions, hypocoercivity is ensured provided that the following assumptions hold:

\begin{itemize}

\item[\textbf{(H1).}]   \textbf{Algebraic Relation}:
\begin{equation}
\P \A \P|_\D = 0.   
\end{equation}

\item[\textbf{(H2).}]  \textbf{Microscopic Coercivity}:
There exists $\Lambda_m > 0$ such that
\begin{equation}
-(\S f, f)_{\Hi} \ge \Lambda_m \| (\I - \P_{\S}) f \|^2_{\Hi}, 
\quad 
\text{for all} \quad  f \in \D.
\end{equation}

\item[\textbf{(H3).}]  \textbf{Macroscopic Coercivity}:
There exists $\Lambda_M >0$ such that
\begin{equation}\label{macro_5}
\| \A \P f \|^2_{\Hi} \ge \Lambda_M \| \P f \|^2_{\Hi},
\quad \text{for all} \quad f \in D\big((\A \P)^*(\A \P)\big).
\end{equation}

\item[\textbf{(H4).}]  \textbf{Auxiliary Operator Bounds}:
Let the operator $B$ be defined as
\begin{equation*}
\B:=\left(\I+(\A \P)^* \A \P\right)^{-1}(\A \P)^* \quad \text { on } \quad D\left((\A \P)^*\right) .
\end{equation*}

The operators $(\B \S, \D)$ and $(\B \A(\I - \P), \D)$ are bounded. Moreover, there exist constants $c_1 < \infty$ and $c_2 < \infty$ such that 
\begin{equation}\label{aucxill}
\begin{aligned}
\|\B\S f\|_{\Hi} &\leq c_1 \|(\I - \Pi_{\#}) f\|_{\Hi},\\
\|\B\A(I - \P) f\|_{\Hi} &\leq c_2 \|(\I - \Pi_{\#}) f\|_{\Hi}, \quad \text{for} \quad f \in \D, 
\end{aligned}
\end{equation}
where $\Pi_{\#}$ is either $\P_{\S}$ or $\P$.~
\end{itemize}

To verify the above hypocoercivity assumptions in concrete settings, we recall some auxiliary criteria from \cite{GROT-STIL-2014}.

\begin{corollary}\label{cor:cor2.15P}
Let the inequality in \eqref{macro_5} be satisfied for all $f \in \D$, and assume that the operator $(G, \D):=\left(\P \A^2 \P, \D\right)$ is essentially m-dissipative (or essentially selfadjoint, equivalently) in $\Hi$, i.e., $(\I-G)(\D)$ is dense in $\Hi$.~Then Condition \textnormal{\textbf{(H3)}} is fulfilled.
\end{corollary}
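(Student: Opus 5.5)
The plan is a closure argument: first identify $-G$ with the positive selfadjoint operator $T:=(\A\P)^*(\A\P)$ on $\D$, then use essential selfadjointness to make $\D$ a core for $T$, and finally pass the inequality \eqref{macro_5} from $\D$ to all of $D(T)$.

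\textbf{Step 1: $T$ is selfadjoint and $\ge 0$.} The operator $\A\P$ with domain $D(\A\P)=\{f\in\Hi:\P f\in D(\A)\}$ is closed, because $\A$ is closed by (D4) and $\P$ is bounded. It is densely defined, since $\D\subset D(\A\P)$ by (D5) and $\D$ is dense by (D3). Von Neumann's theorem then gives that $T=(\A\P)^*(\A\P)$ is selfadjoint and nonnegative. Moreover,
\begin{equation*}
(Tf,f)_{\Hi}=\|\A\P f\|_{\Hi}^2, \quad f\in D(T).
\end{equation*}

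\textbf{Step 2: $-G\subset T$.} Let $f\in\D$. By (D5), $\A\P f\in D(\A)$. For $g\in D(\A\P)$, antisymmetry of $\A$ gives
\begin{equation*}
(\A\P g,\A\P f)_{\Hi}=-(\P g,\A^2\P f)_{\Hi}=(g,-\P\A^2\P f)_{\Hi},
\end{equation*}
using that $\P$ is an orthogonal projection. Hence $\A\P f\in D((\A\P)^*)$ and $Tf=-\P\A^2\P f=-Gf$. Thus the selfadjoint operator $T$ extends $(-G,\D)$.

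\textbf{Step 3: $\D$ is a core for $T$.} By hypothesis $G$ is essentially selfadjoint on $\D$, so $\overline{-G}$ is selfadjoint, and it is contained in the closed operator $T$. A selfadjoint operator has no proper selfadjoint extension: if $S\subset T$ with both selfadjoint, then $T=T^*\subset S^*=S$. Hence $\overline{-G}=T$. In other words, for every $f\in D(T)$ there are $f_n\in\D$ with $f_n\to f$ and $Tf_n\to Tf$ in $\Hi$.

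\textbf{Step 4: extend the inequality.} For $f\in D(T)$ take $f_n\in\D$ as in Step 3. Then
\begin{equation*}
\|\A\P f_n\|_{\Hi}^2=(Tf_n,f_n)_{\Hi}\to(Tf,f)_{\Hi}=\|\A\P f\|_{\Hi}^2,
\end{equation*}
and $\P f_n\to\P f$ because $\P$ is bounded. Passing to the limit in \eqref{macro_5} for the $f_n$ yields \eqref{macro_5} for $f$, which is (H3).

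The only delicate point is Step 2, namely checking that $\A\P f\in D((\A\P)^*)$ with $(\A\P)^*\A\P f=-\P\A^2\P f$. This rests precisely on the requirement $\A\P(\D)\subset D(\A)$ in (D5), together with antisymmetry of $\A$. Everything else is standard maximality of selfadjoint operators.
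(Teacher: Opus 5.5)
Your proof is correct and follows essentially the standard argument of \cite{GROT-STIL-2014}; the paper itself only recalls this corollary from there and gives no proof. As in that source, you show $-G\subset(\A\P)^*(\A\P)$ on $\D$ using (D5) and antisymmetry, conclude from essential selfadjointness of $(G,\D)$ and maximality of selfadjoint operators that $\D$ is a core for $(\A\P)^*(\A\P)$, and then pass \eqref{macro_5} to the limit. If you want to use only the stated density of $(I-G)(\D)$ instead of the equivalence with essential selfadjointness, the same core property follows directly from the boundedness of $\bigl(I+(\A\P)^*\A\P\bigr)^{-1}$.
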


\begin{lemma}\label{auxiliary_lemma_1}
Suppose that Condition \textnormal{\textbf{(H1)}} is satisfied. Assume that $\S(\D) \subset D(\A)$ and assume that there exists $c_3 \in \mathbb{R}$ such that
\begin{equation*}
\P \A \S=c_3 \P \A \quad \text {on} \quad \D .
\end{equation*}

Then $(\B \S, \D)$ is bounded and the first inequality in \textnormal{\textbf{(H4)}} holds with $c_1=\frac{1}{2}\left|c_3\right|$.~
\end{lemma}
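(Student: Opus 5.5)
The statement to prove is Lemma \ref{auxiliary_lemma_1}. The idea is to control $\B\S$ through its adjoint, using the representation $\B = (\I + (\A\P)^*\A\P)^{-1}(\A\P)^*$.

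\textbf{Step 1: $(\B\S)^*$ on the core.} Since $\S$ is symmetric and $\A$ is antisymmetric, for $f \in \D$ and $g$ in a suitable dense set we formally have
\[
(\B\S)^* \supset \S (\A\P)\,(\I+(\A\P)^*\A\P)^{-1} .
\]
Set $h := (\I+(\A\P)^*\A\P)^{-1} g$. Then
\[
(\B\S f, g)_{\Hi} = (\S f, \A\P h)_{\Hi} = -(\S f, \P\A h)_{\Hi}\text{-type expressions},
\]
which we rewrite using the hypothesis $\P\A\S = c_3\,\P\A$ on $\D$. Taking adjoints of this identity, and using $\P^*=\P$, $\A^*=-\A$ on $\D$, gives
\[
\S\A\P = c_3\,\A\P
\]
on the relevant domain, tested weakly against $\D$. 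This uses $\S(\D)\subset D(\A)$, so that $\A\S f$ makes sense.

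\textbf{Step 2: the key identity.} Concretely, for $f\in\D$ and $g\in\Hi$ we compute
\[
(\B\S f, g)_{\Hi} = (\S f, \A\P h)_{\Hi} = -(\P\A\S f, h)_{\Hi} = -c_3(\P\A f, h)_{\Hi} = c_3 (f, \A\P h)_{\Hi}.
\]
It follows that
\[
|(\B\S f,g)_{\Hi}| \le |c_3|\, \|f\|_{\Hi}\,\|\A\P h\|_{\Hi}.
\]
To replace $\|f\|_{\Hi}$ by $\|(\I-\Pi_{\#})f\|_{\Hi}$, observe that $\S\P=0$ and $\S 1 = 0$, so $\S f = \S(\I-\Pi_{\#})f$. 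Hence $f$ may be replaced by $(\I-\Pi_{\#})f$ in the chain above. This relies on approximating $(\I-\Pi_{\#})f$ by elements of $\D$, or alternatively on noting that $\A\P h \perp \mathrm{ran}\,\P$ by (H1) and $\A\P h\perp 1$, which handles the projection directly.

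\textbf{Step 3: the norm bound.} Using the standard estimate from \cite{GROT-STIL-2014},
\[
\|\A\P h\|_{\Hi}^2 \le (h, g)_{\Hi} - \|h\|^2 \le \tfrac14\|g\|^2 ,
\]
we get $\|\A\P(\I+(\A\P)^*\A\P)^{-1}g\|\le \frac12\|g\|$. Taking the supremum over $g$ yields $\|\B\S f\|\le \frac12|c_3|\,\|(\I-\Pi_{\#})f\|$, so $c_1=\frac12|c_3|$.

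\textbf{Main obstacle.} The delicate part is justifying the adjoint manipulations on domains: moving $\P\A$ across the inner product when $h$ is only in $D((\A\P)^*\A\P)$ rather than in $\D$, and passing the projection onto $(\I-\Pi_{\#})f$. I would handle the first point by using that $\A$ is closed and antisymmetric, so that $\P\A\subset(\A\P)^*$, and the second point via (H1), which gives $\P\A\P=0$.
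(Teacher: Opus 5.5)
Your argument is correct and is essentially the standard proof from \cite{GROT-STIL-2014}, which the paper quotes without reproducing. The hypothesis turns $(\B\S f,g)_{\Hi}$ into $c_3(f,\A\P h)_{\Hi}$, i.e.\ $\B\S=c_3\B$ on $\D$, and your estimate $\|\A\P h\|_{\Hi}\le\frac12\|g\|_{\Hi}$ is precisely the bound $\|\B\|\le\frac12$. When writing it up, fix three details:
\begin{itemize}
\item The correct inclusion is $-\P\A|_{D(\A)}\subset(\A\P)^*$; your proposal drops the sign.
\item Orthogonality to $\operatorname{ran}\P$ should be obtained as $(\P f,\A\P h)_{\Hi}=-(\P\A\P f,h)_{\Hi}=0$ for $f\in\D$, since \textbf{(H1)} only holds on $\D$.
\item The orthogonality $\A\P h\perp 1$, needed only for $\Pi_\#=\Pi_{\S}$, does not follow from \textbf{(H1)}. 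It follows from $1\in D(\A)$ with $\A1=0$, which is established in the proof of Lemma~\ref{lemma:projection-domain}.
\end{itemize}
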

 
\begin{proposition}\label{auxiliary-prop2.17}
Let $f \in \D$.~For each $g=(\I-G) f$  we have
\begin{equation}
g \in D\left((\B \A)^*\right) \quad \text { with } \quad(\B \A)^* g=-\A^2 \P f.
\end{equation}

Assume that $(G, \D)$  is essentially m-dissipative (or essentially selfadjoint, respectively) and assume that there exists $c_4<\infty$ such that
\begin{equation}
\left\|(\B \A)^* g\right\|_{\Hi}=\left\|\A^2 \P f\right\|_{\Hi} \leq c_4\|g\|_{\Hi} \quad \text {for all} \quad g=(\I-G) f, \, f \in \D .
\end{equation}

Then the second inequality in Condition \textnormal{\textbf{(H4)}} is satisfied with $c_2=c_4$.~
\end{proposition}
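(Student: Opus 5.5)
The plan is to work directly with the definition $\B=(\I+(\A\P)^*\A\P)^{-1}(\A\P)^*$. I would first identify $\I-G$ on $\D$ with $\I+(\A\P)^*\A\P$, and then move everything across the inner product using self-adjointness and antisymmetry.

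\textbf{Step 1: $\I-G=\I+(\A\P)^*\A\P$ on $\D$.} Since $(\A,D(\A))$ is closed and antisymmetric, $\A\subset-\A^*$. For $u\in D(\A)$ and $v\in D(\A\P)$ we get $(\P\A u,v)_\Hi=(\A u,\P v)_\Hi=-(u,\A\P v)_\Hi$. Hence $-\P\A\subset(\A\P)^*$ on $D(\A)$. By (D5) we have $\A\P f\in D(\A)$ for $f\in\D$, so
\[
(\A\P)^*\A\P f=-\P\A^2\P f=-Gf,
\qquad\text{that is,}\qquad
g=(\I-G)f=(\I+(\A\P)^*\A\P)f .
\]
The operator $T:=\I+(\A\P)^*\A\P$ is self-adjoint with $T\ge \I$. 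Therefore $T^{-1}$ is bounded and self-adjoint on $\Hi$.

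\textbf{Step 2: the adjoint identity.} Take $u\in D(\A)$; this covers both $u\in\D$ and $u=(\I-\P)h$ with $h\in\D$, by (D5). Then $\A u\in D(\A)$ need not hold, but $\A u\in D((\A\P)^*)$ does, since $D((\A\P)^*)\supset D(\P\A)$ trivially. More precisely, for $v\in D(\A\P)$,
\[
(\A u,\A\P v)_\Hi=-(u,\A^2\P v)_\Hi
\]
is not needed here. Instead one uses directly that $(\A\P)^*$ is defined on all of $\Hi$ once $\A\P$ is bounded, or on $\A u$ via $-\P\A$ when $\A u\in D(\A)$. To avoid this issue I would compute the pairing without evaluating $(\A\P)^*\A u$ explicitly:
\[
(\B\A u,g)_\Hi=\bigl((\A\P)^*\A u,\,T^{-1}Tf\bigr)_\Hi=(\A u,\A\P f)_\Hi=-(u,\A^2\P f)_\Hi .
\]
The last equality uses $\A\P f\in D(\A)$ from (D5) together with $\A\subset-\A^*$.

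Since $\D$ is dense, this shows $g\in D((\B\A)^*)$ with $(\B\A)^*g=-\A^2\P f$. The main technical point I expect to check carefully is that $\A u\in D((\A\P)^*)$, so that $\B\A u$ is defined. I would try to obtain this from $\A\P$ being closable with $(\A\P)^*\supset-\P\A$. If that fails, I would instead invoke the known fact from \cite{GROT-STIL-2014} that $\B$ extends to a bounded operator with $\B=(\A\P)^*T^{-1}$-type bounds.

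\textbf{Step 3: the estimate.} Let $h\in\D$ and $u=(\I-\P)h\in D(\A)$. For every $g=(\I-G)f$ with $f\in\D$, Step 2 and the hypothesis give
\[
|(\B\A u,g)_\Hi|=|(u,\A^2\P f)_\Hi|\le c_4\|u\|_\Hi\|g\|_\Hi .
\]
Essential m-dissipativity of $(G,\D)$ means $(\I-G)(\D)$ is dense in $\Hi$. Hence $\|\B\A(\I-\P)h\|_\Hi\le c_4\|(\I-\P)h\|_\Hi$, which is the inequality with $\Pi_\#=\P$.

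For $\Pi_\#=\P_\S$, I would write $(\I-\P)h=(\I-\P_\S)h+(h,1)_\Hi 1$. The constant term is harmless because $\A1=0$, which follows from (D7) together with $\S1=0$. Also $\|(\I-\P_\S)h\|_\Hi\le\|(\I-\P)h\|_\Hi$, since $\P_\S$ projects onto a space containing the range of $\P$ and the constants. Therefore
\[
\B\A(\I-\P)h=\B\A(\I-\P_\S)h,
\]
and applying the same bound to $u=(\I-\P_\S)h$ gives $c_2=c_4$ in both cases.
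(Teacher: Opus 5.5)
Your route is the standard argument behind this result, which the paper does not prove but quotes from \cite{GROT-STIL-2014}. You identify $\I-G$ with $T:=\I+(\A\P)^*\A\P$ on $\D$ via $-\P\A\subset(\A\P)^*$, pair $\B\A u$ against $g=Tf$, and then use density of $(\I-G)(\D)$. Steps 1 and 3 are fine. One small addition to Step 1: $T$ is self-adjoint because $\A\P$ is closed, since $\A$ is closed and $\P$ is bounded.

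\textbf{The gap is in Step 2.} The identity $(\B\A u,g)_{\Hi}=((\A\P)^*\A u,f)_{\Hi}$ requires $\A u\in D((\A\P)^*)$, and you never establish this. You only know $D(\A)\subset D((\A\P)^*)$, and $\A u$ need not lie in $D(\A)$. Neither justification you give works. The inclusion $D((\A\P)^*)\supset D(\P\A)=D(\A)$ says nothing about $\A u$. And $\A\P$ is not bounded; here $\A\P f=-v_3\cdot\nabla_\xi f_{\S}$. Falling back on ``$\B$ extends boundedly'' is the right instinct, but you would still have to show that the pairing identity survives the extension, and you do not.

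\textbf{The fix.} Pass to the adjoint of $\B$ instead of evaluating $(\A\P)^*$ at $\A u$.
\begin{itemize}
\item For $y\in\Hi$ put $z=T^{-1}y$. Then $\|\A\P z\|_{\Hi}^2=(y,z)_{\Hi}-\|z\|_{\Hi}^2\le\|y\|_{\Hi}^2$, so $K:=\A\P T^{-1}$ is everywhere defined and bounded.
\item For $w\in D((\A\P)^*)$ one has $(\B w,y)_{\Hi}=((\A\P)^*w,T^{-1}y)_{\Hi}=(w,Ky)_{\Hi}$. Hence $\B\subset K^*$, and $K^*$ is the bounded extension in which $\B\A(\I-\P)$ should be read.
\item For every $u\in D(\A)$ this gives $(\B\A u,g)_{\Hi}=(\A u,\A\P T^{-1}Tf)_{\Hi}=(\A u,\A\P f)_{\Hi}=-(u,\A^2\P f)_{\Hi}$, with no condition on $\A u$.
\end{itemize}
Your Step 3 then goes through verbatim.

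In the concrete $\mathrm{SE}(3)$ setting the issue disappears anyway. There $\A$ maps $\D$ into $\D$ and $\A\P(\D)\subset\D$, so $\A(\I-\P)h\in\D\subset D(\A)$. The proposition, however, is stated abstractly.

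\textbf{A minor point.} You reduce $\Pi_{\S}$ to $\P$ using $\A1=0$, which you derive from (D7) together with $\S 1=0$. In the abstract framework that does not follow: it is not given that $1\in D(\S)\cap D(\A)$, nor that the closure of $\S-\A$ splits there. In this paper, though, $1\in D(\A)$ with $\A1=0$ is proved directly in Lemma \ref{lemma:projection-domain}, so the reduction is valid here.
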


%------------
%\nonindent
%\textbf{The hypocoercivity theorem}

\begin{theorem}\label{Hypocoercive_them_for_SE(3)}
Assume that the hypocoercivity data \textnormal{\textbf{(D)}} and the hypocoercivity Assumptions \textnormal{\textbf{(H1)}}--\textnormal{\textbf{(H4)}} hold. Then there exist constants $\kappa_1, \kappa_2 \in (0, \infty)$, depending only on $\Lambda_m, \Lambda_M, c_1$ and $c_2$ such that for all $f \in \Hi$, the associated semigroup $(T_t)_{t\ge0}$ satisfies
\begin{equation*}\label{eq:exponential_convergence_to_equilibrium}
    \| T_t f - (f,1)_{\Hi}\|_{\Hi} \leq \kappa_1 e^{-\kappa_2 t} \|f - (f,1)_{\Hi}\|_{\Hi} \quad \text{for all } \quad t \geq 0. 
\end{equation*}
\end{theorem}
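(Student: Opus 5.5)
This is the abstract hypocoercivity theorem of Dolbeault--Mouhot--Schmeiser in the Kolmogorov backward form of \cite{GROT-STIL-2014}. The plan is the standard modified-entropy argument. We restrict to $f\in\Hi$ with $(f,1)_{\Hi}=0$; the general case follows by subtracting the mean, since (D6), (D7) and $\L^*1=0$ imply $T_t1=1$ and $(T_tf,1)_{\Hi}=(f,1)_{\Hi}$. For $\varepsilon\in(0,1)$ to be fixed later, we define the functional
\[
\mathrm{H}_\varepsilon[f]:=\tfrac12\|f\|_{\Hi}^2+\varepsilon(\B f,f)_{\Hi}.
\]
Since $\|\B\|\le \tfrac12$ and $\|\A\B\|\le 1$ (by functional calculus for $(\A\P)^*\A\P$), $\mathrm{H}_\varepsilon$ is equivalent to $\tfrac12\|f\|^2$ with constants $\tfrac12(1\pm\varepsilon)$.

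Next we compute $\frac{d}{dt}\mathrm{H}_\varepsilon[T_tf]=-\mathrm{D}_\varepsilon[T_tf]$ for $f\in\D$, using the core property (D3) and the decomposition (D4). The entropy production is
\[
\mathrm{D}_\varepsilon[f]=-(\S f,f)+\varepsilon(\B\A\P f,f)+\varepsilon(\B\A(\I-\P)f,f)-\varepsilon(\A\B f,f)-\varepsilon(\B\S f,f).
\]
Here we use $\B^*=\A\P(\I+(\A\P)^*\A\P)^{-1}$ together with (H1), which gives $\B=\P\B$ and $\B\P=0$. Each term is then bounded separately:
\begin{itemize}
\item By (H2), $-(\S f,f)\ge\Lambda_m\|(\I-\P_{\S})f\|^2$, and for mean-zero $f$ we have $(\I-\P_\S)f=(\I-\P)f$.
\item By (H3) and the spectral calculus, $\B\A\P\ge\frac{\Lambda_M}{1+\Lambda_M}\P$, so $(\B\A\P f,f)\ge\frac{\Lambda_M}{1+\Lambda_M}\|\P f\|^2$.
\item By (H4), the terms $\B\A(\I-\P)$ and $\B\S$ are bounded by $(c_1+c_2)\|(\I-\P)f\|\,\|\P f\|$, because $\B=\P\B$.
\item The term $(\A\B f,f)=(\A\B f,(\I-\P)f)$ is bounded by $\|\B f\|$ times a constant and by $\|(\I-\P)f\|$.
\end{itemize}
Applying Young's inequality gives
\[
\mathrm{D}_\varepsilon\ge(\Lambda_m-\varepsilon C_1)\|(\I-\P)f\|^2+\varepsilon C_2\|\P f\|^2\ge\kappa\|f\|^2
\]
for $\varepsilon$ small, depending only on $\Lambda_m,\Lambda_M,c_1,c_2$.

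Combining this with the norm equivalence gives $\frac{d}{dt}\mathrm{H}_\varepsilon\le -\frac{2\kappa}{1+\varepsilon}\mathrm{H}_\varepsilon$. Grönwall's lemma then yields
\[
\|T_tf\|\le\sqrt{\tfrac{1+\varepsilon}{1-\varepsilon}}\,e^{-\kappa t/(1+\varepsilon)}\|f\|.
\]
We extend this to all of $\Hi$ by density of $\D$ and contractivity of $T_t$.

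The main obstacle is domain bookkeeping. We must justify differentiating $(\B T_tf,T_tf)$, which needs $T_tf\in D(\L)$ and the identity $\B\L\subset$ a bounded operator extending $\B\S-\B\A$ on $\D$. We first establish this on the core $\D$ using (D5) and (H4), and then pass to $D(\L)$ by closure.
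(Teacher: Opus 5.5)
Your proposal is correct: it is the standard Dolbeault--Mouhot--Schmeiser modified-entropy argument, using $\mathrm{H}_\varepsilon[f]=\frac12\|f\|_{\Hi}^2+\varepsilon(\B f,f)_{\Hi}$, the bounds $\|\B\|\le\frac12$ and $\|\A\B\|\le 1$, the identities $\B=\P\B$ and $\B\P=0$, and an extension of the entropy-production estimate from the core $\D$ to $D(\L)$ by graph-norm approximation. The paper does not prove this theorem itself but imports it from \cite{GROT-STIL-2014}, where it is proved along exactly these lines, so your route coincides with the one the paper relies on.
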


The constants $\kappa_1$ and $\kappa_2$ depend on the micro and macro coercivity constants $\Lambda_m$ and $\Lambda_M$, as well as on $c_1$ and $c_2$ from Assumption \textnormal{\textbf{(H4)}}. Remark \ref{rem:final}, below, provides a more detailed description of these constants.

%\newpage
\section{Hypocoercivity on the  Euclidean motion group \texorpdfstring{$\SE(3)$}{SE(3)}}\label{Hypo_SE(3)}

This section contains the core of the present article, and its goal is to give an explicit construction of the generator of a hypocoercive Langevin-type dynamics on the Lie group $\SE(3)$.~Throughout this section, we use the explicit right-invariant vector fields on $\SE(3)$ introduced in the preliminaries, see Equation \eqref{eq:vector_fields_SE(3)_compactified_version}.
%-----------------------------------------------------------------------------------------------------
\subsection{The symmetric part of the generator}

We first specify the invariant probability measure, the corresponding Hilbert space of square-integrable and core of nice functions, and the symmetric part of the generator.

\begin{definition}\label{def:measure_SE(3)}
Let $\Phi\in C^\infty(\mathbb{R}^3)$ be a real valued potential,
such that there exists a finite normalization constant $Z(\Phi):=\int_{\mathbb{R}^3}e^{-\Phi(\xi)} d\xi <\infty$.~We define the probability measure $\mu_{\Phi}$ on $\SE(3)$ by
\begin{equation}\label{eq:measure_SE(3)}
    \mu_\Phi:=\gamma_\Phi\otimes\nu_3,
\end{equation}
where $\gamma_\Phi$ is the probability measure on $\mathbb{R}^3$
given by
\begin{equation}
    \gamma_\Phi(B_1)
    :=
    Z(\Phi)^{-1}
    \int_{B_1}e^{-\Phi(\xi)}\,d\xi,
 \quad B_1\in\Bor(\mathbb{R}^3), \end{equation}
and $\nu_3$ is the normalized Haar measure on $\SO(3)$, given in
ZXZ Euler angles by
\begin{equation}
    \nu_3(B_2)
    :=
    \frac{1}{8\pi^2}
    \int_{B_2}
    \sin(\beta)\,d\alpha\,d\beta\,d\gamma,
    \quad
    B_2\in\Bor(\SO(3)).
\end{equation}
\end{definition}

\begin{remark}
The choice of this probability measure is quite natural, given that our space is isomorphic to a non-compact component $\mathbb{R}^3$, and a compact component $\SO(3)$. We want to control the non-compact part with a factor $e^{-\Phi}$, for some potential $\Phi$ as defined above. 

The normalized Haar measure $\nu_{3}$ on $\SO(3)$ in terms of the Euler angles $\alpha, \beta, \gamma$ is given explicitly in \cite[Chapter 5, Section 5.5.1]{Chi-Alex}.
\end{remark}

We consider the Hilbert space $\Hi= L^2(\SE(3),\mu_{\Phi})$, with core $\D := C_{c}^{\infty}(\SE(3))$, the smooth compactly supported functions on $\SE(3)$.~For $\sigma \in (0, \infty)$, we define the symmetric operator $(\S,\D)$ in $\Hi$ as a Laplacian on $\SO(3)$, i.e.,

\begin{equation}\label{eq:symmetric_operator_SE(3)}
\begin{aligned}
\S := -\frac{\sigma^{2}}{2} \sum_{i=1}^3 \widetilde{X}_{i}^2 = \frac{\sigma^{2}}{2}\Delta_{\SO(3)}.  
\end{aligned}
\end{equation}

Therefore, the symmetric operator $\S$ is expressed just in terms of the rotation vector fields $\{\widetilde{X}_i\}_{i=1,2,3}$  on $\SO(3)$.

\subsection{Construction of the antisymmetric part of the generator}\label{construction:antisymmetric-operator}

In contrast, the construction of the antisymmetric operator $\A$ is more delicate. In view of the hypocoercivity data introduced in Section~\ref{Hypocoercivity_Data_Overview}, it should be chosen to satisfy two structural requirements:

\begin{itemize} 

\item Antisymmetry with respect to the probability measure $\mu_\Phi$, and consequently compatibility with the invariance property required in \textbf{(D6)}. 

\item Generating the missing vector fields corresponding to the translation directions through commutators with the vector fields of $\S$, ensuring that the operator $\L$ satisfies Hörmander's condition. It's implications we need to show essential m-dissipativity of the operator $(\L, \D)$. 
\end{itemize}

Recall from Remark \ref{rem:important-relations} that the vector field $\widetilde{X}_6$ can also be written as $w_3 = v_{3} \cdot \nabla_{\xi}$, where $v_{3} = Re_3^{}$.~Additionally, consider the commutator relations and the requirement for $\A$ to be an antisymmetric operator on $\D$.~However, with respect to the weighted measure as in Definition \ref{def:measure_SE(3)}, the vector field $\widetilde{X}_6$ is not antisymmetric with respect to the measure $\mu_\Phi$.~To address this, we consider an operator of the form
\begin{equation}\label{A_0}
    \A_0 := - \widetilde{X}_6 + T, 
\end{equation}
where $T$ is a first-order operator on $\D$.~To correct this, we add a rotation term that acts only on the rotation directions. The natural geometric quantity is $\nabla_{\xi}\Phi \times v_{3}$, which is orthogonal to $v_{3}$.~Therefore, the correction is confined to the rotation directions generated by $\widetilde{X}_1$ and $\widetilde{X}_2$.~The construction of the antisymmetric operator uses the geometric identities established in Section \ref{Geometrical-Preliminaries}.

%%%%%%%%%%%%%%%%%%%%%%%%%%%%%%%%%%%%%%%%%%%%%%%%%%%%%%%%%%%%%%%%%%%%%%%%%%%%%%%%%%%%%%%%%

\begin{proposition}\label{decomposition-of-cross-product}
Let $v_{i}=Re_{i}$, $i=1,2,3$, where $R\in \SO(3)$.~Then the vector
$\nabla_{\xi}\Phi(\xi)\times v_{3}$, belongs to the plane spanned by $v_{1}$ and $v_{2}$, and admits the decomposition
\begin{equation*}
\nabla_{\xi}\Phi \times v_{3} = \tau_{1} \,v_{1}
+ \tau_{2} \,v_{2}, 
%(\nabla_{\xi}\Phi(\xi)\cdot v_{2})\,v_{1}-(\nabla_{\xi}\Phi(\xi)\cdot v_{1})\,v_{2}.
\end{equation*}
where $\tau_{1}:=\nabla_{\xi} \Phi \cdot v_{2}$, and $\tau_{2}:=-\nabla_{\xi}\Phi \cdot v_{1}$.~
\end{proposition}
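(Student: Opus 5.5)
Since $R\in\SO(3)$, the vectors $v_1,v_2,v_3$ form a positively oriented orthonormal basis of $\mathbb{R}^3$. The plan is to expand $\nabla_{\xi}\Phi$ in this basis and then use the cross product relations of Observation~\ref{observation_1}. First I would write
\begin{equation*}
\nabla_{\xi}\Phi=(\nabla_{\xi}\Phi\cdot v_1)\,v_1+(\nabla_{\xi}\Phi\cdot v_2)\,v_2+(\nabla_{\xi}\Phi\cdot v_3)\,v_3,
\end{equation*}
which holds pointwise in $\xi$ and $R$ by orthonormality.

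Next I take the cross product with $v_3$ term by term, using bilinearity. The term with $v_3\times v_3=0$ drops out. By Observation~\ref{observation_1}, $v_1\times v_3=-v_2$, and by antisymmetry $v_2\times v_3=v_1$, which is also listed there. This gives
\begin{equation*}
\nabla_{\xi}\Phi\times v_3=(\nabla_{\xi}\Phi\cdot v_2)\,v_1-(\nabla_{\xi}\Phi\cdot v_1)\,v_2=\tau_1v_1+\tau_2v_2 .
\end{equation*}
This is exactly the claimed decomposition with $\tau_1=\nabla_{\xi}\Phi\cdot v_2$ and $\tau_2=-\nabla_{\xi}\Phi\cdot v_1$.

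Membership in the plane $\operatorname{span}\{v_1,v_2\}$ then follows immediately. Alternatively, one can note that $(\nabla_{\xi}\Phi\times v_3)\cdot v_3=0$, and $v_3^{\perp}=\operatorname{span}\{v_1,v_2\}$ by orthonormality.

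There is no real obstacle here. The only point that needs care is the sign convention in Observation~\ref{observation_1}. Those relations rest on $\det R=1$, i.e.\ on the columns of $R$ being right-handed, since $Ra\times Rb=R(a\times b)$ for $R\in\SO(3)$. This is why the identities hold for every $R$, including the Euler-angle singular set.
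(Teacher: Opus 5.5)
Your proof is correct and is essentially the paper's argument: both work in the orthonormal frame $\{v_1,v_2,v_3\}$ and rely on the cross-product relations of Observation~\ref{observation_1}. The only difference is cosmetic. You expand $\nabla_{\xi}\Phi$ in the frame and use bilinearity, whereas the paper expands $\nabla_{\xi}\Phi\times v_3$ directly and reads off $\tau_1,\tau_2$ via $(a\times b)\cdot c=a\cdot(b\times c)$; your remark that $Ra\times Rb=R(a\times b)$ for $R\in\mathrm{SO}(3)$ makes the relations valid for every $R$ is a nice addition.
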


%\unsure{$\tau_{1}$, where ... }

\begin{proof}
Since $\{v_{1},v_{2},v_{3}\}$ is an orthonormal basis of $\mathbb R^3$, every vector admits the decomposition
%\unsure{The transpose.}
\begin{equation*}
\nabla_{\xi}\Phi \times v_{3}
=
\sum_{i=1}^3
\bigl(
(\nabla_{\xi}\Phi \times v_{3})\cdot v_{i}
\bigr)v_{i} .
\end{equation*}

Moreover, $(\nabla_{\xi}\Phi \times v_{3})\cdot v_{3}=0$, since the cross product is orthogonal to both factors. Therefore,
\begin{equation}\label{eq:nabla_tau}
\nabla_{\xi}\Phi \times v_{3} = \tau_{1}v_{1}+\tau_{2}v_{2},
\end{equation}

where the terms $\tau_{1}$ and $\tau_{2}$ are given by $\tau_{1}
=(\nabla_{\xi}\Phi \times v_{3})\cdot v_{1}$ and $\tau_{2}=(\nabla_{\xi}\Phi \times v_{3})\cdot v_{2}$.~Using the vector identity $(a\times b)\cdot c =a\cdot(b\times c)$,
together with $v_{3}\times v_{1}=v_{2}, v_{3}\times v_{2}=-v_{1}$ in Observation \ref{observation_1}, we obtain
\begin{equation*}
\begin{aligned}
\tau_{1} &=
(\nabla_{\xi}\Phi \times v_{3})\cdot v_{1}
=\nabla_{\xi}\Phi \cdot(v_{3}\times v_{1})
=\nabla_{\xi}\Phi \cdot v_{2}, \\    
\tau_{2} &=(\nabla_{\xi}\Phi \times v_{3})\cdot v_{2}
=\nabla_{\xi}\Phi \cdot(v_{3}\times v_{2})
=-\nabla_{\xi}\Phi \cdot v_{1}.
\end{aligned}
\end{equation*}

Hence Equation \eqref{eq:nabla_tau} can be written as
\begin{equation*}
\begin{aligned}
\nabla_{\xi}\Phi \times v_{3}
=
(\nabla_{\xi}\Phi \cdot v_{2})\,v_{1}
-
(\nabla_{\xi}\Phi \cdot v_{1})\,v_{2}.
\end{aligned}
\qedhere
\end{equation*} \end{proof}

Therefore, the candidate for the antisymmetric operator on $\D$ is given by
\begin{equation}\label{anti:construction}
    \begin{aligned}
    \A_{\alpha} &:= - (R e_{3}) \cdot \nabla_{\xi}  + \alpha \left( \tau_{1} \widetilde{X}_{1} + \tau_{2} \widetilde{X}_{2} \right)\\  
    %&= v_{3} \cdot \nabla_{\xi}  + \alpha  \left( (\nabla_{\xi} \Phi(\xi) \cdot v_{2}) \widetilde{X}_{1}^r - (\nabla_{\xi} \Phi(\xi) \cdot v_{1}) \widetilde{X}_{2}^r \right). \\
    &= - \widetilde{X}_{6}   + \alpha  \left( \nabla_{\xi} \Phi \cdot v_{2} \,\, \widetilde{X}_{1} - \nabla_{\xi} \Phi \cdot v_{1} \,\, \widetilde{X}_{2} \right). 
    \end{aligned}
\end{equation}
%\improvement{This is one of the most important results in the paper, check very well the convention and also, after this proposition use the conventions, or remind them that they are in chapter 2.}
The goal of the following proposition is to find the correct value for $\alpha$ above. 
%%%%%%%%%%%%%%%%%%%%%%%%%%%%%%%%%%%%%%%%%%%%%%%%%%%%%%%%%%%%%%%%%%%%%%%%%%%%%%%%%%%%%%%%%

\begin{proposition}\label{Anti_Construction}

The operator $\left(\A_\alpha, \D\right)$ in Equation \eqref{anti:construction} is antisymmetric on $\Hi$ if and only if $\alpha=-\frac{1}{2}$.~Consequently, we define $\A:= \A_{- \frac{1}{2}}$.~ Thus, using  $\widetilde{X}_6=v_{3}\cdot\nabla_{\xi}$,

\begin{equation*}
    \begin{aligned}
    \A &= - \widetilde{X}_6  - \frac{1}{2} \left( \nabla_{\xi} \Phi \cdot v_{2} \,\, \widetilde{X}_{1} - \nabla_{\xi} \Phi \cdot v_{1} \,\, \widetilde{X}_{2} \right).  
    \end{aligned}
\end{equation*}

\end{proposition}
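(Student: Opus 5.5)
The plan is to reduce antisymmetry of the first-order operator $\A_\alpha$ to the vanishing of its divergence with respect to $\mu_\Phi$. For a smooth vector field $Y$ on $\SE(3)$ and $f,g\in\D$, integrating $Y(fg)$ by parts gives
\begin{equation*}
(Yf,g)_{\Hi}+(f,Yg)_{\Hi} = -\int_{\SE(3)} \operatorname{div}_{\mu_\Phi}(Y)\, fg \, d\mu_\Phi .
\end{equation*}
Here $\operatorname{div}_{\mu_\Phi}$ is the divergence relative to the density of $\mu_\Phi$. Hence $(\A_\alpha,\D)$ is antisymmetric if and only if $\operatorname{div}_{\mu_\Phi}(\A_\alpha)=0$ $\mu_\Phi$-a.e.

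For the "only if" direction, assume the divergence is a nonzero continuous function. Take $f=g$ supported where it has a fixed sign; then $(\A_\alpha f,f)_{\Hi}=-\tfrac12\int \operatorname{div}_{\mu_\Phi}(\A_\alpha) f^2\,d\mu_\Phi\neq 0$, which contradicts antisymmetry.

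I then compute the divergence term by term, using the product structure $\mu_\Phi=\gamma_\Phi\otimes\nu_3$.

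\textbf{The translation term.} The field $\widetilde{X}_6=v_3\cdot\nabla_\xi$ has coefficients depending only on $R$. Integrating by parts in $\xi$ against $e^{-\Phi}$ gives $\operatorname{div}_{\mu_\Phi}(\widetilde X_6)=-\nabla_\xi\Phi\cdot v_3$. Hence the contribution of $-\widetilde X_6$ is $+\nabla_\xi\Phi\cdot v_3$.

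\textbf{The rotation terms.} The fields $\widetilde X_1,\widetilde X_2$ are right-invariant on the unimodular group $\SO(3)$, so they are divergence-free with respect to the Haar measure $\nu_3$. One can also check this directly in Euler angles with density $\sin\beta$: for instance, $\partial_\beta(\sin\beta\cos\gamma)+\partial_\gamma(-\cos\beta\sin\gamma)+\partial_\alpha(\sin\gamma)=0$. These fields have no $\xi$-component. Therefore
\begin{equation*}
\operatorname{div}_{\mu_\Phi}(\tau_1\widetilde X_1)=\widetilde X_1(\tau_1)=\nabla_\xi\Phi\cdot \widetilde X_1(v_2)=\nabla_\xi\Phi\cdot v_3 ,
\end{equation*}
\begin{equation*}
\operatorname{div}_{\mu_\Phi}(\tau_2\widetilde X_2)=-\nabla_\xi\Phi\cdot\widetilde X_2(v_1)=\nabla_\xi\Phi\cdot v_3 .
\end{equation*}
Both identities use Lemma \ref{lemma_2_important}.

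Summing the three contributions gives
\begin{equation*}
\operatorname{div}_{\mu_\Phi}(\A_\alpha)=(1+2\alpha)\,\nabla_\xi\Phi\cdot v_3 .
\end{equation*}
This vanishes for $\alpha=-\tfrac12$. Conversely, $\nabla_\xi\Phi\cdot v_3=\nabla_\xi\Phi(\xi)\cdot Re_3$ is not identically zero: $e^{-\Phi}$ is integrable, so $\Phi$ is not constant. Fix $\xi$ with $\nabla_\xi\Phi(\xi)\neq0$ and choose $R$ with $Re_3$ parallel to $\nabla_\xi\Phi(\xi)$; by continuity the function is nonzero on an open set of positive $\mu_\Phi$-measure. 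So antisymmetry forces $\alpha=-\tfrac12$.

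The main point needing care is the integration by parts on $\SO(3)$ in Euler coordinates near the singular sets $\beta\in\{0,\pi\}$. I would handle it intrinsically: $\widetilde X_1,\widetilde X_2$ are globally smooth, $\SO(3)$ is compact without boundary, and Haar measure is invariant under the flows of right-invariant fields. Hence $\int \widetilde X_i h\,d\nu_3=0$ for every smooth $h$, and no coordinate singularities enter.
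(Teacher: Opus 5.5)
Your proof is correct and is essentially the paper's argument. The paper integrates the transport term and the torque terms by parts separately and collects the leftover terms into $-(1+2\alpha)\int f_1 f_2\,(\nabla_\xi\Phi\cdot v_3)\,d\mu_\Phi$, which is exactly your $-\int \operatorname{div}_{\mu_\Phi}(A_\alpha)\, f_1 f_2\,d\mu_\Phi$, obtained from the same identities $\widetilde{X}_1(v_2)=v_3$ and $\widetilde{X}_2(v_1)=-v_3$. Your write-up also supplies two things the paper leaves implicit: the ``only if'' direction, via the non-constancy of $\Phi$ forced by $Z(\Phi)<\infty$, and a coordinate-free justification of the integration by parts on $\mathrm{SO}(3)$ using the invariance of Haar measure under the flows.
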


\begin{proof}

%\textbf{How to find $a$ such that the resulting operator is antisymmetric?}

Let us consider the expression of the operator $\A_\alpha$ above in \eqref{anti:construction}, under the requirement that the operator is antisymmetric with respect to the weighted measure $\mu_\Phi$.~ Specifically, for $f_1, f_2 \in \D$, we require $(\A_\alpha f_1, f_2)_{\Hi} = - (f_1, \A_\alpha f_2)_{\Hi}$.~ Therefore,
%\begin{equation}
 %   \begin{aligned}
  %  &(\A_\alpha f,g)_{\Hi} = Z(\Phi)^{-1} \int_{\mathbb{R}^3 \rtimes \SO(3)} \A_\alpha f \, g  \,\, d \mu_{\Phi}\\
   % &=    \underbrace{- Z(\Phi)^{-1}  \int_{\mathbb{R}^3 \rtimes \SO(3)}(Re_3) \cdot \nabla_{\xi} f \,\,  g  \quad  \mu_{\Phi}}_\textbf{$=:$ TI$(f_{1}, f_{2})$ } \\
    %&+ \underbrace{ \alpha \, Z(\Phi)^{-1}  \int_{\mathbb{R}^3 \rtimes \SO(3)} \left( \nabla_{\xi} \Phi  \times R e_{3} \right) \cdot \nabla_{\SO(3)}  f \,\, g  \quad  \mu_{\Phi}}_\textbf{$=:$ TII$(f_{1}, f_{2})$ }. 
    %\end{aligned}
%\end{equation}
\begin{equation}
    \begin{aligned}
    &(\A_\alpha f_1,f_2)_{\Hi} =  \int_{\SE(3)} \A_\alpha f_1 \, f_2  \,\, d \mu_{\Phi}\\
    &=    \underbrace{-   \int_{\SE(3)}(Re_3) \cdot \nabla_{\xi} f_1 \,\,  f_2  \,\, d \mu_{\Phi}}_\text{$=:$ TI$(f_{1}, f_{2})$ } + \underbrace{ \alpha \,   \int_{\SE(3)} \left( \nabla_{\xi} \Phi  \times R e_{3} \right) \cdot \nabla_{\SO(3)}  f_1 \,\, f_2  \,\,  d \mu_{\Phi}}_\text{$=:$ TII$(f_{1}, f_{2})$ }. 
    \end{aligned}
\end{equation}

Let us do integration by parts on \text{TI$(f_{1}, f_{2})$ } in the expression above
\begin{equation}
    \begin{aligned}
        &\text{TI$(f_{1}, f_{2})$ }= - \int_{\SE(3)} \left( Re_3 \cdot \nabla_{\xi} f_1  \right) \, f_2  \,\,  d \mu_{\Phi}\\
        %&=- Z(\Phi)^{-1} \int_{\mathbb{R}^3 \rtimes \SO(3)} \left( (Re_3)^{\top} \cdot \nabla_{\xi} f \right) \, g  \quad e^{-\Phi(\xi)} d \xi \, \nu_{3}\\
        &=  Z(\Phi)^{-1} \int_{\SO(3)} \int_{\mathbb{R}^3}  f_1 \,\, \left( Re_3 \cdot \nabla_{\xi} (f_2  e^{-\Phi })   \right)  \, d \xi \,\, d \nu_{3}\\ 
         &=   \underbrace{   Z(\Phi)^{-1} \int_{\SO(3)} \int_{\mathbb{R}^3}  f_1  (Re_3) \cdot \nabla_{\xi} f_2 \,\,  e^{-\Phi} d \xi \,\, d\nu_{3} }_\text{=$- \text{TI$(f_{2}, f_{1})$ }$}  \\
         &  - \underbrace{   Z(\Phi)^{-1} \int_{\SO(3)} \int_{\mathbb{R}^3}  f_1  (v_{3} \cdot \nabla_{\xi} \Phi )  f_2 \,\,  e^{-\Phi} d \xi \, d\nu_{3}}_\text{$=:$ETI}.
    \end{aligned}
\end{equation}

%\unsure{Too many we's.}

This operator must be antisymmetric with respect to the measure \eqref{eq:measure_SE(3)}. We now focus on the second term.
\begin{align*}\label{Term(II)}
   % \begin{aligned}
        &\text{TII$(f_{1}, f_{2})$ }=  \alpha \,   \int_{\SE(3)} \left( \nabla_{\xi} \Phi \cdot v_{2} \,\ \widetilde{X}_{1} f_1 - \nabla_{\xi} \Phi \cdot v_{1} \,\, \widetilde{X}_{2} f_1 \right)\,f_2  \,\,  d\mu_{\Phi}\\
        %&=  - \alpha \, Z(\Phi)^{-1}  \int_{\mathbb{R}^3 \rtimes \SO(3)}   f    \left(  \widetilde{X}_{1}^r  ((\nabla_{\xi} \Phi(\xi) \cdot v_{2}) g  ) -  \widetilde{X}_{2}^r ((\nabla_{\xi} \Phi(\xi) \cdot v_{1})g ) \right)   \quad \mu_{\Phi}\\
        &= -  \alpha \, \int_{\SE(3)}   f_1    \left(   \nabla_{\xi} \Phi \cdot \widetilde{X}_{1} (v_{2}) \,\, f_2    +  \nabla_{\xi} \Phi \cdot v_{2} \,\, \widetilde{X}_{1} f_2    \right. \\
        &  \hspace{2cm} \left. -  \nabla_{\xi} \Phi \cdot \widetilde{X}_{2} (v_{1}) \, f_2 +   \nabla_{\xi} \Phi \cdot v_{1} \,\, \widetilde{X}_{2} f_2 \right)   \, d\mu_{\Phi}\\
        &=    \left.  \underbrace{ - \alpha \,   \int_{\SE(3)}   f_1  \left(\nabla_{\xi} \Phi \cdot \widetilde{X}_{1} (v_{2})  -  \nabla_{\xi} \Phi \cdot \widetilde{X}_{2} (v_{1}) \right)f_2    \, d\mu_{\Phi} }_\text{$=:$ETII}    \right. \\
        & \hspace{2cm} \left. + \underbrace{ \alpha \,   \int_{\SE(3)}   f_1 \left( \nabla_{\xi} \Phi \cdot v_{2} \,\, \widetilde{X}_{1} f_2   -  \nabla_{\xi} \Phi \cdot v_{1} \,\, \widetilde{X}_{2} f_2 \right)  \, d\mu_{\Phi} }_\text{$=-\text{TII$(f_{2}, f_{1})$ }$} \right. .
    %\end{aligned}
\end{align*}

%-----------
From Lemma \ref{lemma_2_important}, and the right-invariant rotation vector fields $\widetilde{X}_1,\widetilde{X}_2,\widetilde{X}_3$ on $\SO(3)$ we recall that $\widetilde{X}_1(v_{2})=v_{3},\quad \widetilde{X}_2(v_{1})=-v_{3}$.~Therefore, the extras terms in \text{TI$(f_{1}, f_{2})$ } and \text{TII$(f_{1}, f_{2})$ } is given as follows 
\begin{equation}
    \begin{aligned}
       \text{ETI} + \text{ETII} &= \int_{\SE(3)}  f_1 \,\, \Big( - v_{3} \cdot \nabla_{\xi} \Phi   - \alpha \Big(    \nabla_{\xi} \Phi \cdot v_{3}    -  \nabla_{\xi} \Phi \cdot  (-v_{3}) \Big) \Big) f_2 \, d\mu_\Phi  \\
       &=  \int_{\SE(3)}  f_1 \,\, \Big( - v_{3} \cdot \nabla_{\xi} \Phi  - \alpha \left( 2  \nabla_{\xi} \Phi \cdot v_{3} \right) \Big) f_2 \, d\mu_\Phi  \\
       %&=  \int_{\SE(3)}  f \,\, \Big( - v_{3} \cdot \nabla_{\xi} \Phi  - 2 \alpha \left( \nabla_{\xi} \Phi \cdot v_{3}  \right)  \Big) g \,  d\mu_\Phi  \\
       &= - (1+2 \alpha)\int f_1 f_2\, \left(\nabla_{\xi}\Phi\cdot v_{3} \right)\, d\mu_\Phi.
    \end{aligned}
\end{equation}

Hence, for $\alpha = - \frac{1}{2}$, the extra term vanishes, and the statement follows. 
\end{proof}

\begin{remark}
The correction term has a torque-like interpretation. The vector
$\nabla_{\xi}\Phi\times v_{3}$ is perpendicular to the transport direction $v_{3}$ and lies in the plane spanned by $v_{1}$ and $v_{2}$. Hence the correction acts only through the rotation vector fields already present in the symmetric part. In this sense, $\A$ is the minimal correction of the transport field $v_{3}\cdot\nabla_{\xi}$ that restores antisymmetry with respect to $\mu_\Phi$.~% The identities $\tilde X_1(v_{2})=v_{3}$ and $\tilde X_2 (v_{1})=-v_{3}$ are verified in Section \ref{Geometrical-Preliminaries} Lemma \ref{lemma_2_important}.
\end{remark}

%\newpage
\subsection{Verification of hypocoercivity data (D)}

Finally, we can define our Kolmogorov backward operator $(\L,\D)$ by
\begin{equation}\label{eq:Operator_L}
\L:= S - A = \frac{\sigma^2}{2} \Delta_{\SO(3)} + \widetilde{X}_6  + \frac{1}{2}  \left( \nabla_{\xi} \Phi \cdot v_{2} \,\, \widetilde{X}_{1} - \nabla_{\xi} \Phi \cdot v_{1} \,\,  \widetilde{X}_{2} \right)
\end{equation}
on $\D= C_c^\infty(\SE(3))$.

The following lemma summarises basic properties of the operator $(\L, \D )$.~

\begin{lemma}\label{lemma:lemma2.25,SE(3)}
The operators $(\S, \D)$ and $(\A, \D)$, and the measure $\mu_\Phi$, satisfy:

\begin{enumerate}
\item[(i)] $(\S, \D)$ is symmetric and non-positive definite in $\Hi$.~

\item[(ii)] $(\A, \D)$ is antisymmetric in $\Hi$.~ 

\item[(iii)] $\mu_{\Phi}$ is invariant for $(\L, \D)$ in the sense that $(\L f, 1)_{\Hi} = \int_{\SE(3)}  \L f d \mu_{\Phi} =0$ for all $f \in \D$.~ 

\end{enumerate}
\end{lemma}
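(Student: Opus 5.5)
The plan is to reduce all three items to one integration-by-parts identity for the rotation vector fields:
\begin{equation*}
\int_{\SE(3)} (\widetilde{X}_i f_1)\, f_2 \, d\mu_\Phi = -\int_{\SE(3)} f_1\, (\widetilde{X}_i f_2)\, d\mu_\Phi, \qquad f_1,f_2\in\D,\ i=1,2,3.
\end{equation*}

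\textbf{Step 1: the integration-by-parts identity.} Since $\widetilde{X}_1,\widetilde{X}_2,\widetilde{X}_3$ contain no $\xi$-derivatives, they commute with the weight $Z(\Phi)^{-1}e^{-\Phi(\xi)}$. So by Fubini it suffices to show that each $\widetilde{X}_i$ is divergence free with respect to the Haar measure $\nu_3$. There are two routes.
\begin{itemize}
\item Abstractly: $\SO(3)$ is compact, hence unimodular, and the flows of right-invariant fields are left translations, which preserve $\nu_3$.
\item Concretely in ZXZ coordinates, with density $\sin\beta$: for $\widetilde{X}_1$,
\begin{equation*}
\partial_\gamma(-\cot\beta\sin\gamma\,\sin\beta)+\partial_\alpha(\sin\gamma)+\partial_\beta(\cos\gamma\,\sin\beta)=-\cos\beta\cos\gamma+\cos\gamma\cos\beta=0 .
\end{equation*}
The field $\widetilde{X}_2$ is handled the same way, and $\widetilde{X}_3=\partial_\gamma$ is immediate.
\end{itemize}
The points $\beta\in\{0,\pi\}$ are $\nu_3$-null and cause no trouble. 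The smooth global fields act on $C_c^\infty$ functions, and $\SO(3)$ has no boundary, so no boundary terms arise. In the $\xi$ direction the compact support of $f_1,f_2$ kills boundary terms.

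\textbf{Step 2: item (i).} The sign convention must be fixed first. I read $\Delta_{\SO(3)}$ as the non-positive Laplace--Beltrami operator $\sum_{i=1}^3\widetilde{X}_i^2$. This is the operator produced by the Stratonovich--It\^o correction of the noise $\sigma\sum_i\widetilde{X}_i\circ dB^i$, so
\begin{equation*}
\S=\frac{\sigma^2}{2}\Delta_{\SO(3)}=\frac{\sigma^2}{2}\sum_{i=1}^3\widetilde{X}_i^2 .
\end{equation*}
The extra minus sign in \eqref{eq:symmetric_operator_SE(3)} I treat as a misprint, since only this reading is consistent with $\L=\S-\A$ and with the claimed non-positivity. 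Applying Step 1 twice gives
\begin{equation*}
(\S f_1,f_2)_\Hi=-\frac{\sigma^2}{2}\sum_{i=1}^3(\widetilde{X}_if_1,\widetilde{X}_if_2)_\Hi .
\end{equation*}
The right-hand side is symmetric in $f_1,f_2$, which gives symmetry. Putting $f_1=f_2=f$ gives
\begin{equation*}
(\S f,f)_\Hi=-\frac{\sigma^2}{2}\sum_{i=1}^3\|\widetilde{X}_if\|_\Hi^2\le 0 ,
\end{equation*}
which is non-positive definiteness.

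\textbf{Step 3: item (ii).} This is exactly Proposition \ref{Anti_Construction} with $\alpha=-\frac12$. Its proof already used Step 1, applied to the products $(\nabla_\xi\Phi\cdot v_j)f_2$, together with the identities of Lemma \ref{lemma_2_important}.

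\textbf{Step 4: item (iii).} Write $(\L f,1)_\Hi=(\S f,1)_\Hi-(\A f,1)_\Hi$. The constant $1$ is not in $\D$, so I cannot directly cite symmetry of $\S$ or antisymmetry of $\A$. Instead I argue directly.
\begin{itemize}
\item For the symmetric part, $(\S f,1)_\Hi=\frac{\sigma^2}{2}\sum_i\int\widetilde{X}_i(\widetilde{X}_if)\,d\mu_\Phi=0$. This follows from Step 1 with $f_2=1$; that case is valid because only $f_1$ needs compact support to kill boundary terms, and $\widetilde{X}_i1=0$.
\item For the antisymmetric part, I redo the computations of TI and TII in the proof of Proposition \ref{Anti_Construction} with $f_2=1$. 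Again only $f_1$ needs compact support. This yields $(\A f,1)_\Hi=-(f,\A1)_\Hi$, and $\A1=0$ because $\A$ is a first-order operator with no zeroth-order term.
\end{itemize}
Hence $(\L f,1)_\Hi=0$.

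The only genuine obstacle is Step 1, and in particular the sign convention for $\Delta_{\SO(3)}$ needed to make (i) correct. Once the divergence-free property of $\widetilde{X}_1,\widetilde{X}_2,\widetilde{X}_3$ with respect to $\nu_3$ is established, everything else follows by bookkeeping.
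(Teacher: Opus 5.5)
Your proposal is correct and follows essentially the same route as the paper. Item (i) comes from integration by parts giving $(\S f_1,f_2)_{\Hi}=-\frac{\sigma^2}{2}\sum_i(\widetilde X_if_1,\widetilde X_if_2)_{\Hi}$, (ii) from Proposition~\ref{Anti_Construction}, and (iii) from redoing the transport and torque integrations by parts with $f_2=1$, which is exactly the paper's explicit TI+TII+TIII $=(-1+\frac12+\frac12)\int f(\nabla_\xi\Phi\cdot v_3)\,d\mu_\Phi=0$. Your reading of the sign of $\S$ as $\frac{\sigma^2}{2}\sum_i\widetilde X_i^2$ matches what the paper actually uses (its proof of (i), Theorem~\ref{Hormanders-Condi}, and $\Delta_{\SO(3)}v_3=-2v_3$). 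Your divergence-free check also supplies the Haar-measure integration by parts that the paper only asserts.
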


\begin{proof}
(i) Let $f_1,f_2 \in C_c^{\infty}(\SE(3))$, using Green's formula and Fubini's theorem, we obtain the following.
\begin{equation*}
\begin{aligned}
(\S f_1, f_2)_{\Hi} &= \frac{\sigma^2}{2}(\Delta_{\SO(3)} f_1, f_2)_{\Hi} = - \frac{\sigma^2}{2}(\nabla_{\SO(3)} f_1, \nabla_{\SO(3)} f_2)_{\Hi}\\
&= \frac{\sigma^2}{2} (f_1, \Delta_{\SO(3)}  f_2)_{\Hi} = ( f_1, \S f_2)_{\Hi}
    \end{aligned}
\end{equation*}
From this we also have that
\begin{equation*}
(\mathcal{S} f, f)_{\Hi}=-\frac{\sigma^2}{2}\left\|\nabla_{\SO(3)} f\right\|_{\Hi}^2 \leq 0. 
\end{equation*}
Hence $(\S, \mathcal{D})$ is a symmetric operator and non-positive.\\

(ii) This is proven by the construction of the antisymmetric operator in Section \ref{construction:antisymmetric-operator}.\\

(iii) Let $f \in \mathcal{D}$.~Let us note that for the symmetric operator we have that 
\begin{equation}\label{TS}
\begin{aligned}
(\S f, 1)_{\Hi} & =\frac{\sigma^2}{2} \int_{\mathbb{R}^3} \int_{\SO(3)} \Delta_{\mathrm{SO}(3)} f \, 1\,  d \nu_3 d \gamma_{\Phi} \\
& =-\frac{\sigma^2}{2} \int_{\mathbb{R}^3} \int_{SO(3)} \nabla_{\mathrm{SO}(3)} f \cdot \nabla_{\SO(3)} 1  \, d \nu_3 d \gamma_{\Phi} \\
& =0.
\end{aligned}
\end{equation}

For the antisymmetric operator, we obtain:
\begin{equation}
\begin{aligned}
&(\A f, 1)_{\Hi} = \underbrace{-\int_{\SE(3)} \widetilde{X}_6 f d \mu_{\Phi} }_\text{$=:$TI}\\
 & \underbrace{-\frac{1}{2} \int_{\SE(3)}\left(\nabla_{\xi} \Phi \cdot v_2\right) \widetilde{X}_1 f d \mu_{\Phi} }_\text{$=:$TII}  + \underbrace{\frac{1}{2} \int_{\SE(3)} \left(\nabla_{\xi} \Phi \cdot v_1\right) \widetilde{X}_2 f d \mu_{\Phi}}_\text{$=:$TIII}.
\end{aligned}
\end{equation}

\textbf{Term TI.} Integrating by parts in $\xi$ yields
\begin{equation}
\begin{aligned}
\text{TI} &= -\int_{\mathbb{R}^3} v_3 \cdot \nabla_{\xi} f d \gamma_{\Phi}  =-Z(\Phi)^{-1}  \int_{\mathbb{R}^3} v_3 \cdot \nabla_{\xi} f e^{-\Phi} d \xi \\
& =Z(\Phi)^{-1} \int_{\mathbb{R}^3} f v_3 \cdot \nabla_{\xi}\left(e^{-\Phi}\right) d \xi 
=\int_{\mathbb{R}^3} f\left(\nabla_{\xi} \Phi \cdot v_3\right) d \gamma_{\Phi}.
\end{aligned}
\end{equation}

Thus,
\begin{equation}\label{TI}
\text{TI} %= -\int_{ \SE(3)} \widetilde{X}_6 f d \mu_{\Phi}
=- \int_{\SE(3)} f\left(\nabla_{\xi} \Phi \cdot v_3\right) d \mu_{\Phi}.
\end{equation}

\textbf{Term TII.} Since $\widetilde{X}_1$ is a rotation invariant vector field, integrating by parts with respect to the normalized Haar measure $\nu_3$ yields
\begin{equation}
\int_{\SO(3)} h \widetilde{X}_1 f d \nu_3=-\int_{\SO(3)} f \widetilde{X}_1 h d \nu_3 \quad \text{for } \, f \in \D , h \in C^{\infty}(\SE(3)).
\end{equation}

Therefore,
\begin{equation}
\begin{aligned}
\text{TII} = -\frac{1}{2} \int_{\SE(3)}\left(\nabla_{\xi} \Phi \cdot v_2\right) \widetilde{X}_1 f d \mu_{\Phi} =\frac{1}{2} \int_{\SE(3)} f \widetilde{X}_1\left(\nabla_{\xi} \Phi \cdot v_2\right) d \mu_{\Phi}.
\end{aligned}
\end{equation}

Because the potential $\Phi$ depends only on $\xi$, $\widetilde{X}_1 \nabla_{\xi} \Phi=0$, so
\begin{equation}
\widetilde{X}_1\left(\nabla_{\xi} \Phi \cdot v_2\right)=\nabla_{\xi} \Phi \cdot \widetilde{X}_1 v_2 .
\end{equation}

Using the expression $\widetilde{X}_1 (v_2)=v_3$ in Lemma  \ref{lemma_2_important}, we obtain
\begin{equation}\label{TII}
\text{TII} %= -\frac{1}{2} \int_{\SE(3)}\left(\nabla_{\xi} \Phi \cdot v_2\right) \widetilde{X}_1 f d \mu_{\Phi}
=\frac{1}{2} \int_{\SE(3)} f\left(\nabla_{\xi} \Phi \cdot v_3\right) d \mu_{\Phi}.
\end{equation}

\textbf{Term TIII.} Similarly, for the last term,
\begin{equation}
\begin{aligned}
\text{TIII} = \frac{1}{2} \int_{\SE(3)}\left(\nabla_{\xi} \Phi \cdot v_1\right) \widetilde{X}_2 f d \mu_{\Phi} &=-\frac{1}{2} \int_{\SE(3)} f \widetilde{X}_2\left(\nabla_{\xi} \Phi \cdot v_1\right) d \mu_{\Phi} \\
&=-\frac{1}{2} \int_{\SE(3)} f\left(\nabla_{\xi} \Phi \cdot \widetilde{X}_2 v_1\right) d \mu_{\Phi}.
\end{aligned}
\end{equation}

Given that $\widetilde{X}_2 (v_1)=-v_3$, we conclude that
\begin{equation}\label{TIII}
\text{TIII} %= \frac{1}{2} \int_{\SE(3)}\left(\nabla_{\xi} \Phi \cdot v_1\right) \widetilde{X}_2 f d \mu_{\Phi}
=\frac{1}{2} \int_{\SE(3)} f\left(\nabla_{\xi} \Phi \cdot v_3\right) d \mu_{\Phi}.
\end{equation}

Combining Equations \eqref{TI}, \eqref{TII}, and \eqref{TIII}, we obtain
\begin{align*}
(\A f,1)_{\Hi} &= \text{TI} + \text{TII} + \text{TIII}
%&=-\int_{\SE(3)} f(\nabla_{\xi}\Phi\cdot v_3)\,d\mu_\Phi
%+\frac12\int_{\SE(3)} f(\nabla_{\xi}\Phi\cdot v_3)\,d\mu_\Phi\\
%&\quad
%+\frac12\int_{\SE(3)}
%f(\nabla_{\xi}\Phi\cdot v_3)\,d\mu_\Phi\\
=\left(-1+\frac12+\frac12\right)
\int_{\SE(3)}
f(\nabla_{\xi}\Phi\cdot v_3)\,d\mu_\Phi
=0.
\end{align*}
Together with Equation \eqref{TS}, we have $(\L f,1)_{\Hi}=0$.~Hence $\mu_\Phi$ is invariant for $(\L,\D)$.
\end{proof}

\begin{definition}
The Lemma \ref{lemma:lemma2.25,SE(3)} establishes that $(\S,\D)$, $(\A, \D)$ and $(\L, \D)$ are densely defined and dissipative in $\Hi$, and therefore are closable. The closures of $(\S,\D)$, $(\A, \D)$ and $(\L, \D)$ in $\Hi$ are denoted by $(\S, D(\S))$, $(\A, D(\A))$ and $(\L, D(\L))$, respectively. Consequently, $(\S, D(\S))$ is symmetric, while $(\A, D(\A))$ is antisymmetric. Moreover, one can show that $(\S,D(\S))$ is even self-adjoint. Since we do not need this fact in the sequel, we omit the proof. Very important for verifying the hypocoervivity assumptions \textnormal{\textbf{(H)}} in turn is the essential m-dissipativity of $(\L, \D)$, i.e., the m-dissipativity of $(\L, D(\L))$. This we prove in Section \ref{subsec:mdissi}, below.
\end{definition}

Next, we introduce two orthogonal projections that are important for the hypocoercive structure.

\begin{definition}\label{def:ortho-projection}
We define the orthogonal projection $\Pi_{\S}: \Hi \to \Hi$ by averaging over the rotation components
\begin{equation}\label{eq:projection_SE(3)}
  \Pi_{\S} f := \int_{\mathrm{SO}(3)} f \,\, d\nu_3,
  \quad f \in \Hi,
\end{equation}

where $\nu_3$ is the normalized Haar measure on $\SO(3)$.~ Furthermore, we define the orthogonal projection $\Pi:\Hi\to\Hi$ which additionally removes the component along the constant function $1$, as follows
\begin{equation}
  \Pi f := \Pi_{\S}f  - (f,1)_{\Hi}, \quad f \in \Hi.
\end{equation}
\end{definition}

The projections above are essential for the Poincaré-type inequalities needed in the Hypocoercivity Assumptions \textnormal{\textbf{(H1)}}-\textnormal{\textbf{(H4)}} (see Section \ref{Hypocoervicity_Assumptions_SE(n)}), and are used in the hypocoercive estimates.

\begin{lemma}\label{lemma:projection-domain}
The projections satisfy $\Pi(\Hi)\subset D(\S), \S\Pi f=0$,
as well as $\Pi(\D)\subset D(\A), \A\Pi(\D)\subset D(\A)$.~Moreover, for $f\in\D$, we have
\begin{equation}\label{2.21_SE(3)}
\begin{aligned}
 \A \P f &= - v_{3} \cdot \nabla_{\xi} f_{\S}, \quad \text{for all}  \quad f \in \D,
\end{aligned}
\end{equation}
where $f_{\S}:=\Pi_{\S} f \in C_c^{\infty}\left(\mathbb{R}^3\right)$.~
\end{lemma}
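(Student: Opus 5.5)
The plan is to work first with the explicit description of $\Pi_{\S}$ and then use the closedness of $\S$ and $\A$ to pass from smooth functions to all of $\Hi$.

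\emph{Step 1: the range of $\Pi$ lies in $\ker\S$.} For $f\in\D$, $\Pi_{\S} f$ depends only on $\xi$. Since $\widetilde{X}_1,\widetilde{X}_2,\widetilde{X}_3$ differentiate only in the Euler angles, $\S\Pi_{\S} f=0$, and constants are trivially annihilated, so $\S\Pi f=0$. For general $f\in\Hi$, the function $\Pi f$ lies in $L^2(\gamma_\Phi)$ with $(\Pi f,1)_\Hi=0$. Approximate it in $L^2(\gamma_\Phi)$ by functions $g_n\in C_c^\infty(\mathbb{R}^3)$. Compact support in $\SE(3)$ is not an issue, because $\SO(3)$ is compact, so $g_n\in\D$ when viewed as functions on $\SE(3)$. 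Then $\S g_n=0$, and closedness of $(\S,D(\S))$ gives $\Pi f\in D(\S)$ with $\S\Pi f=0$.

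\emph{Step 2: $\Pi(\D)\subset D(\A)$ and the formula for $\A\Pi f$.} For $f\in\D$ the average $f_{\S}=\int_{\SO(3)}f\,d\nu_3$ is smooth in $\xi$ by differentiation under the integral, and it has compact support because $f$ does. So $f_{\S}\in C_c^\infty(\mathbb{R}^3)\subset\D$ and $\Pi f=f_{\S}-(f,1)_\Hi$. The constant $1$ is not compactly supported, so I need $1\in D(\A)$ with $\A1=0$. To get this, take cutoffs $\chi_n(\xi)=\chi(\xi/n)$. Then $\A\chi_n=-v_3\cdot\nabla_\xi\chi_n$, since the $\widetilde{X}_1,\widetilde{X}_2$ terms kill functions of $\xi$. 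This is bounded by $\|\nabla\chi\|_\infty/n$ and tends to $0$ in $\Hi$, while $\chi_n\to1$ in $\Hi$ by dominated convergence. Closedness of $\A$ gives $1\in D(\A)$ and $\A1=0$.

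Hence $\A\Pi f=\A f_{\S}$. The rotational part of $\A$ involves $\widetilde{X}_1f_{\S}=\widetilde{X}_2f_{\S}=0$, so this equals $-\widetilde{X}_6 f_{\S}=-v_3\cdot\nabla_\xi f_{\S}$, which is \eqref{2.21_SE(3)}.

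\emph{Step 3: $\A\Pi(\D)\subset D(\A)$.} The function $h:=-v_3\cdot\nabla_\xi f_{\S}=-\sum_j (Re_3)_j\,\partial_{\xi_j}f_{\S}$ is a product of smooth entries of $R$ (the third column of the matrix, namely $s_\alpha s_\beta$, $-c_\alpha s_\beta$, $c_\beta$, which are smooth on $\SO(3)$) and compactly supported smooth functions of $\xi$. So $h\in C_c^\infty(\SE(3))=\D\subset D(\A)$.

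The only mildly delicate point is Step 2's handling of the constant $1$, and more generally of the non-compactly supported elements in Step 1. The cutoff argument together with the closedness of the operators (guaranteed by the preceding definition) handles both. One should also be careful that $\Pi_{\S}f\in L^2(\gamma_\Phi)$, which follows from Jensen's inequality.
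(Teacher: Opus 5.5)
Your proof is correct and follows essentially the same route as the paper. You approximate the $\xi$-dependent function $\Pi f$ in $L^2(\gamma_\Phi)$ by elements of $C_c^\infty(\mathbb{R}^3)\subset\mathcal{D}$ and use closedness of $S$, you obtain $1\in D(A)$ with $A1=0$ from a cutoff sequence with $|\nabla_\xi\varphi_n|\le C/n$ and closedness of $A$, and you note $-v_3\cdot\nabla_\xi f_{S}\in\mathcal{D}$. Your explicit justification that $\Pi_S f\in L^2(\gamma_\Phi)$ via Jensen's inequality is a small detail the paper leaves implicit.
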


\begin{proof}
We divide the proof into two parts.

\begin{enumerate}

\item[(i)] We first prove that $\Pi(\Hi)\subset D(\S)$ and $\S\Pi=0$.~Note first that
\begin{equation}\label{eq:important-1}
\D\ni f
\mapsto
\Pi_{\S}f=f_{\S}
\in C_c^\infty(\mathbb{R}^3)
\subset\D.
\end{equation}
Thus, $\Pi_{\S} \D \subset \ker(\S)$.~We now extend this observation to the range of $\Pi$ in $\Hi$.~The range of $\Pi$ is identified with a subspace of $L^2(\mathbb{R}^3,\gamma_\Phi)$.~Let $h\in\Pi(\Hi)$.~Since $C_c^\infty(\mathbb{R}^3)$ is dense in $L^2(\mathbb{R}^3,\gamma_\Phi)$, there exists a sequence $(h_{n})_{n\in\mathbb{N}}\subset C_c^\infty(\mathbb{R}^3)$ such that
\begin{equation*}
h_{n}\rightarrow h
\,\, \text{in }L^2(\mathbb{R}^3,\gamma_\Phi).
\end{equation*}

Identifying $h_{n}$ and $h$ again with functions on $\SE(3)$ that are independent of the rotation variable,  we have $h_{n}\in\D$ and $\S h_{n}=0$, $n\in\mathbb{N}$.~Thus, $h_{n}\rightarrow h$, $\S h_{n}\rightarrow0$
in $\Hi$.~Since $(\S,D(\S))$ is closed, it follows that $h\in D(\S)$ and $\S h=0$.~Therefore,
\begin{equation*}
\Pi(\Hi)\subset D(\S),
\quad
\S\Pi=0.
\end{equation*}

\item[(ii)] We now prove the assertions concerning $\A$.~Let $f\in\D$.~Since $f_{\S}$ is independent of the rotation variables, we have $\widetilde{X}_i f_{\S}=0$, for $ i=1,2,3$.~Consequently,
\begin{equation}\label{eq:A-PiS-domain}
\begin{aligned}
\A f_{\S}=-v_{3}\cdot\nabla_{\xi} f_{\S}.
\end{aligned}
\end{equation}

It remains to take into account the constant appearing in the
definition of $\Pi$.~ We first show that $1\in D(\A)$ and $\A1=0$.~Let $(\varphi_{n})_{n\in\mathbb{N}}\subset
C_c^\infty(\mathbb{R}^3)$ be a sequence of cut-off functions satisfying
\begin{equation*}
0\leq\varphi_{n}\leq1,
\,
\varphi_{n}\rightarrow1
\,\, \text{pointwise},
\,
|\nabla_{\xi}\varphi_{n}|
\leq\frac{C}{n},
\end{equation*}
where $0 < C < \infty$ is independent of $n$.~We identify each $\varphi_{n}$ with
a function on $\SE(3)$ independent of the rotation variable. Then
$\varphi_{n}\in\D$, and by the same computation as in
\eqref{eq:A-PiS-domain}, $\A\varphi_{n}=-v_{3}\cdot\nabla_{\xi}\varphi_{n}$.~Since $|v_{3}|=1$,
$|\A\varphi_{n}|\leq
|\nabla_{\xi}\varphi_{n}|
\leq
\frac{C}{n}$.~As $\mu_\Phi$ is a probability measure, it follows that
\begin{equation*}
\|\A\varphi_{n}\|_{\Hi}
\leq
\frac{C}{n}
\rightarrow0.
\end{equation*}
Moreover, by dominated convergence, $\varphi_{n}\rightarrow 1$ in $\Hi$.~Since $(\A,D(\A))$ is closed, we conclude that $1\in D(\A)$, and $\A1=0$.~Now, by Definition~\ref{def:ortho-projection}, since $f_{\S}\in\D\subset D(\A)$ and $1\in D(\A)$, we obtain $\Pi f\in D(\A)$.~Furthermore,
\begin{align*}
\A\Pi f
=
\A f_{\S}-(f,1)_{\Hi}\A1
=
\A f_{\S}
&=
-v_{3}\cdot\nabla_{\xi} f_{\S}.
\end{align*}
This proves \eqref{2.21_SE(3)} and shows that
$\Pi(\D)\subset D(\A)$.~Finally, since $f_{\S}\in C_c^\infty(\mathbb{R}^3)$ and $v_{3}$ is smooth on $\SO(3)$, the function $-v_{3}\cdot\nabla_{\xi} f_{\S}$ belongs to $\D$.~Therefore, $\A\Pi f\in\D\subset D(\A)$, and hence $\A\Pi(\D)\subset D(\A)$.        
\end{enumerate}

This completes the proof.
\end{proof}

%%%%%%%%%%%%%%%%%%%%%%%%%%%%%%%%%%%%%%%%%%%%%%%%%%%%%%%%%%%%%%%%%%%
\begin{lemma}\label{lemma:4.20}
It holds that $1\in D(\L)$ and $\L 1=0$.~
\end{lemma}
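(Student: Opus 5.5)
We prove the lemma with the same cut-off approximation used in Lemma \ref{lemma:projection-domain}(ii) to show $1\in D(\A)$ and $\A1=0$. The only difference is that we now work with the closure $(\L,D(\L))$ of $(\L,\D)$ rather than with $(\A,D(\A))$.

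Let $(\varphi_n)_{n\in\mathbb{N}}\subset C_c^\infty(\mathbb{R}^3)$ be the cut-off functions from that proof, so that
\begin{equation*}
0\le\varphi_n\le 1,\qquad \varphi_n\to1 \text{ pointwise},\qquad |\nabla_\xi\varphi_n|\le \frac{C}{n}.
\end{equation*}
We regard each $\varphi_n$ as a function on $\SE(3)$ that does not depend on the rotation variables. Then $\varphi_n\in\D$, and by dominated convergence $\varphi_n\to1$ in $\Hi$, since $\mu_\Phi$ is a probability measure.

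Next we compute $\L\varphi_n$ from the explicit formula \eqref{eq:Operator_L}. Because $\varphi_n$ is independent of $\alpha,\beta,\gamma$, we have $\widetilde{X}_i\varphi_n=0$ for $i=1,2,3$. This kills the Laplacian $\Delta_{\SO(3)}=-\sum_{i=1}^3\widetilde{X}_i^2$ and both torque terms involving $\widetilde{X}_1$ and $\widetilde{X}_2$. Hence
\begin{equation*}
\L\varphi_n=\widetilde{X}_6\varphi_n=v_3\cdot\nabla_\xi\varphi_n .
\end{equation*}
This is the one point to check: the unbounded coefficients $\nabla_\xi\Phi\cdot v_i$ multiply only rotational derivatives, which vanish, so no growth assumption on $\Phi$ enters. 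Since $|v_3|=1$, we get $|\L\varphi_n|\le C/n$ pointwise. Therefore $\|\L\varphi_n\|_{\Hi}\le C/n\to0$.

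Finally, $(\L,\D)$ is closable: it is dissipative and densely defined by Lemma \ref{lemma:lemma2.25,SE(3)}. Its closure $(\L,D(\L))$ is therefore closed. From $\varphi_n\to1$ and $\L\varphi_n\to0$ in $\Hi$, closedness gives $1\in D(\L)$ and $\L1=0$.

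Alternatively, we could note that $1\in D(\A)$ with $\A1=0$ by Lemma \ref{lemma:projection-domain}, and that $\S\varphi_n=0$. The direct argument above avoids having to relate the domains of the separate closures, so it is the one we use.
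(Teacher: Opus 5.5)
Your proposal is correct and is essentially the paper's own argument: the rotation-independent cut-offs $\varphi_n\in\D$ satisfy $\S\varphi_n=0$ and $\A\varphi_n=-v_3\cdot\nabla_\xi\varphi_n$, so $\|\L\varphi_n\|_{\Hi}\le C/n\to0$, and the conclusion follows from $\varphi_n\to1$ in $\Hi$ and closedness of $(\L,D(\L))$. Your remark is also right: the unbounded coefficients $\nabla_\xi\Phi\cdot v_i$ multiply only the vanishing rotational derivatives, which is why no condition on $\Phi$ is needed for this step.
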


\begin{proof}
Let $(\varphi_{n})_{n\in\mathbb N}\subset C_c^\infty(\mathbb R^3)$ be the sequence of cut-off functions used in the proof of Lemma~\ref{lemma:projection-domain}. Identifying $\varphi_{n}$ with a function on $\SE(3)$ which is independent of the rotation variable, we have $\varphi_{n}\in\D$ and $\S\varphi_{n}=0$.~Moreover, as shown in the proof of Lemma~\ref{lemma:projection-domain}, $\A \varphi_{n} = - v_{3}\cdot\nabla_{\xi}\varphi_{n}$.~Hence
\begin{equation*}
\L\varphi_{n} = \S\varphi_{n}- \A \varphi_{n}
= \,v_{3} \cdot\nabla_{\xi}\varphi_{n}.
\end{equation*}
Since $|v_{3}|=1$, and $|\nabla_{\xi}\varphi_{n}|\leq \frac{C}{n}$, we obtain
\begin{equation*}
\|\L\varphi_{n}\|_{\Hi}
\leq \frac{C}{n}
\rightarrow0.
\end{equation*}
Furthermore, $\varphi_{n}\to1$ in $\Hi$ by dominated convergence. Since $(\L,D(\L))$ is closed, it follows that $1\in D(\L)$ and  $\L1=0$.
\end{proof}

\subsection{Essential m-dissipativity of \texorpdfstring{$(\L,\D)$}{(L,D)}}\label{subsec:mdissi}

It is very convenient to check all hypocoercivity assumptions only on the set of nice functions 
$\D= C_c^\infty(\SE(3))$. However, this is only justified after showing that $\D$ is a core of an m-dissipative extension of $(\L,\D)$. In this context, the following theorem is of great importance. Although the operator $\L$ is degenerate, the specific construction of the antisymmetric operator $\A$ ensures that the associated vector fields satisfy Hörmander’s condition.

\begin{theorem}[Hörmander's condition]
\label{Hormanders-Condi}
Let $\sigma\in (0,\infty)$ and $\Phi\in C^\infty(\mathbb R^3)$. Define
 $Y_i:=\frac{\sigma}{\sqrt{2}}\,\widetilde{X}_i,\, i=1,2,3$, and
$ Y_0
    :=
    \widetilde{X}_6
    +\frac12(\nabla_\xi\Phi\cdot v_2)\widetilde{X}_1
    -\frac12(\nabla_\xi\Phi\cdot v_1)\widetilde{X}_2$.
Then
\begin{equation*}
    L=\sum_{i=1}^3Y_i^2+Y_0,
\end{equation*}
and the vector fields $Y_0,Y_1,Y_2,Y_3$ satisfy Hörmander's
bracket-generating condition, i.e.,
\begin{equation*}
    \operatorname{Lie}\{Y_0,Y_1,Y_2,Y_3\}(p)
    =T_p\SE(3) =\mathfrak{se}(3)
\end{equation*}
for every $p\in\SE(3)$. Consequently, $L+c$ is hypoelliptic for every $c\in C^\infty(\SE(3))$.
\end{theorem}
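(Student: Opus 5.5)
Two things have to be checked: that $L$ can be written in sum-of-squares form, and that the brackets of $Y_0,\dots,Y_3$ span the tangent space at every point.

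\textbf{Sum-of-squares form.} This is immediate from the definitions. Since $Y_i^2=\frac{\sigma^2}{2}\widetilde{X}_i^2$, the sum $\sum_{i=1}^3 Y_i^2$ equals the second-order part of $L$. Comparing with the expression for $\L$ in Equation \eqref{eq:Operator_L}, the remaining first-order part of $\L=\S-\A$ is exactly $Y_0$. Here I take the sign convention for $\S$ so that $\S=\frac{\sigma^2}{2}\sum_i\widetilde{X}_i^2$, which is the negative semidefinite Laplacian consistent with Lemma \ref{lemma:lemma2.25,SE(3)}. I also check that $Y_0$ matches $-\A$ from Proposition \ref{Anti_Construction}.

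\textbf{Spanning condition.} The span of $\widetilde{X}_1,\dots,\widetilde{X}_6$ at each point is the full six-dimensional tangent space, since these are right-invariant fields forming a basis of $\mathfrak{se}(3)$. Hence it suffices to obtain all six fields pointwise from iterated brackets.

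The rotation fields come for free: $Y_1,Y_2,Y_3$ are nonzero constant multiples of $\widetilde{X}_1,\widetilde{X}_2,\widetilde{X}_3$.

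For the translational fields, I compute $[Y_3,Y_0]$. The commutator relations \eqref{eq:commutator-relations} give $[\widetilde{X}_3,\widetilde{X}_6]=0$, so this bracket yields nothing. Instead I use $[\widetilde{X}_1,\widetilde{X}_6]=\varepsilon_{13k}\widetilde{X}_{k+3}=-\widetilde{X}_5$ and $[\widetilde{X}_2,\widetilde{X}_6]=\widetilde{X}_4$.

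I then write $Y_0=\widetilde{X}_6+a\widetilde{X}_1+b\widetilde{X}_2$ with smooth coefficients $a=\frac12\nabla_\xi\Phi\cdot v_2$ and $b=-\frac12\nabla_\xi\Phi\cdot v_1$. Applying the Leibniz rule gives
\begin{equation*}
[\widetilde{X}_1,Y_0]=-\widetilde{X}_5+(\widetilde{X}_1a)\widetilde{X}_1+a[\widetilde{X}_1,\widetilde{X}_1]+(\widetilde{X}_1b)\widetilde{X}_2+b[\widetilde{X}_1,\widetilde{X}_2].
\end{equation*}
Every term except $-\widetilde{X}_5$ lies in $\mathrm{span}\{\widetilde{X}_1,\widetilde{X}_2,\widetilde{X}_3\}$, because $[\widetilde{X}_1,\widetilde{X}_2]=\widetilde{X}_3$. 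The analogous computation shows that $[\widetilde{X}_2,Y_0]=\widetilde{X}_4+(\text{rotation fields})$.

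Consequently, at every point $p$ the vectors
\begin{equation*}
Y_1,\quad Y_2,\quad Y_3,\quad [Y_1,Y_0],\quad [Y_2,Y_0],\quad Y_0
\end{equation*}
reduce, modulo the span of the rotation fields, to $\widetilde{X}_5$, $\widetilde{X}_4$ and $\widetilde{X}_6$ respectively. This span is therefore six-dimensional. The key point is that the coefficients $a,b$ only ever produce rotational directions, so no cancellation with the translational part can occur, even where $\nabla_\xi\Phi$ vanishes. I also record that no degeneracy arises at $\beta\in\{0,\pi\}$: the argument uses only the global right-invariant fields and their bracket relations, not the Euler-angle coordinates.

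\textbf{Hypoellipticity.} This follows from Hörmander's theorem for operators $\sum Y_i^2+Y_0+c$ with smooth zeroth-order term $c$. The only step requiring any care is the bracket bookkeeping with the non-constant coefficients of $Y_0$. I expect this to be routine once the rotation span is quotiented out as above.
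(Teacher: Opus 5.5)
Your proposal is correct and follows the paper's argument. Both proofs bracket $\widetilde X_1,\widetilde X_2$ (equivalently $Y_1,Y_2$) with $Y_0$, use the commutator relations to obtain $-\widetilde X_5$ and $\widetilde X_4$ modulo $\operatorname{span}\{\widetilde X_1,\widetilde X_2,\widetilde X_3\}$, and combine these with $Y_0\equiv\widetilde X_6$ before invoking Hörmander's theorem. Your explicit Leibniz-rule bookkeeping and your remark on the sign convention for $S$ only make explicit what the paper leaves implicit. On the sign: the paper's formula $-\frac{\sigma^2}{2}\sum_i\widetilde X_i^2$ must be read as $\frac{\sigma^2}{2}\sum_i\widetilde X_i^2$ for the sum-of-squares identity to hold.
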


\begin{proof}
Since $Y_0$ differs from $\widetilde{X}_6$ only by smooth
linear combinations of the rotation vector fields
$\widetilde{X}_1$ and $\widetilde{X}_2$,
by the commutator relations in Equation~\eqref{eq:commutator-relations} we have
\begin{equation*}
[\widetilde{X}_1,Y_0]
=
-\widetilde{X}_5+R_1,
\qquad
[\widetilde{X}_2,Y_0]
=
\widetilde{X}_4+R_2,
\end{equation*}
where $R_1,R_2\in \operatorname{span}\{\widetilde{X}_1,\widetilde{X}_2,\widetilde{X}_3\}$. Hence, at every $p\in\SE(3)$,
\begin{equation*}
\widetilde{X}_1(p),\widetilde{X}_2(p),\widetilde{X}_3(p),
Y_0(p),
[\widetilde{X}_1,Y_0](p),
[\widetilde{X}_2,Y_0](p)
\end{equation*}
span the same tangent space as $\widetilde{X}_1(p),\ldots,\widetilde{X}_6(p)$. Therefore
\begin{equation*}
\operatorname{Lie}
\{\widetilde{X}_1,\widetilde{X}_2,\widetilde{X}_3,Y_0\}(p)
= T_p\SE(3) \,\, \text{for every} \,\, p\in\SE(3).
\end{equation*}

The last assertion follows from Hörmander's hypoellipticity theorem \cite{Hormander1967}.
\end{proof}

As a consequence of Theorem \ref{Hormanders-Condi} we obtain essential m-dissipativity:

\begin{theorem}\label{L-is-m-dissipative}
Let $\sigma \in (0, \infty)$ and let $\Phi$ satisfy the assumptions of Definition~\ref{def:measure_SE(3)}. Then $(\L,\D)$ is essentially m-dissipative on $\Hi$.~Consequently, its closure $(\L,D(\L))$ generates a strongly continuous contraction semigroup $(T_t)_{t\geq0}$ on $\Hi$.~
\end{theorem}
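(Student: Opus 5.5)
The plan is a Lumer--Phillips argument: show that the range $(\I-\L)(\D)$ is dense in $\Hi$, using the hypoellipticity from Theorem~\ref{Hormanders-Condi} to upgrade weak solutions to smooth ones. By Lemma~\ref{lemma:lemma2.25,SE(3)}, $(\L,\D)$ is dissipative, since $(\L f,f)_{\Hi}=(\S f,f)_{\Hi}\le 0$ by antisymmetry of $\A$. It is also densely defined. So it suffices to prove that any $h\in\Hi$ with $((\I-\L)f,h)_{\Hi}=0$ for all $f\in\D$ vanishes. The semigroup assertion then follows from the Lumer--Phillips theorem applied to the closure.

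\textbf{Step 1: a distributional equation for $h$.} Write $d\mu_\Phi=\rho\,dx$ in local coordinates, where $\rho=Z(\Phi)^{-1}e^{-\Phi}\times$(Haar density) is smooth and strictly positive. The annihilation condition then says that $u:=h\rho\in L^1_{loc}$ satisfies $(\I-\L^{\prime})u=0$ in $\mathcal{D}'(\SE(3))$, where $\L'$ is the formal Lebesgue adjoint. Using the symmetry of $\S$ and the antisymmetry of $\A$ with respect to $\mu_\Phi$, the adjoint with respect to $\mu_\Phi$ is $\S+\A=\sum Y_i^2-Y_0$. Hence $h$ solves $(\I-\S-\A)h=0$ in the distributional sense. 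The operator $\sum_i Y_i^2-Y_0$ has the same vector fields up to the sign of the drift, so the Lie algebra computation of Theorem~\ref{Hormanders-Condi} applies verbatim and it is hypoelliptic. Therefore $h\in C^\infty(\SE(3))$ and $h=\S h+\A h$ pointwise.

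\textbf{Step 2: a cutoff energy estimate.} Choose cut-offs $\varphi_n\in C_c^\infty(\mathbb{R}^3)$ as in Lemma~\ref{lemma:projection-domain}, viewed as functions on $\SE(3)$ constant in the rotation variables, with $|\nabla_\xi\varphi_n|\le C/n$. Then $\varphi_n^2 h\in\D$ and
\[
\|\varphi_n h\|_{\Hi}^2=(\S h,\varphi_n^2h)_{\Hi}+(\A h,\varphi_n^2 h)_{\Hi}.
\]

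For the $\S$ term: since $\varphi_n$ does not depend on the rotation, integration by parts on $\SO(3)$ gives $(\S h,\varphi_n^2h)_{\Hi}=-\tfrac{\sigma^2}{2}\|\varphi_n\nabla_{\SO(3)}h\|_{\Hi}^2\le0$.

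For the $\A$ term: the rotational part of $\A$ commutes with multiplication by $\varphi_n$. By the antisymmetry computation of Proposition~\ref{Anti_Construction}, localized by the cutoff,
\[
(\A h,\varphi_n^2h)_{\Hi}=\tfrac12\int h^2\,\A(\varphi_n^2)\,d\mu_\Phi=-\int h^2\varphi_n\,(v_3\cdot\nabla_\xi\varphi_n)\,d\mu_\Phi.
\]
The extra terms ETI, ETII cancel exactly as in that proof. This identity uses only that $A$ is a first-order operator with zero $\mu_\Phi$-divergence, so it holds for smooth $h$ against compactly supported $\varphi_n^2$.

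The remaining term is bounded by $\frac{C}{n}\|h\|_{\Hi}^2$. Hence $\|\varphi_nh\|^2_{\Hi}\le \frac Cn\|h\|^2_{\Hi}$, and letting $n\to\infty$ with monotone convergence gives $h=0$.

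\textbf{Main obstacle.} The delicate step is justifying the integration by parts in Step 2 for a smooth $h$ that is only known to lie in $L^2$, not in $\D$. The integrals $\int\varphi_n^2 h\,\S h\,d\mu_\Phi$ and the first-order terms must be finite and manipulable. This holds because everything is compactly supported in $\xi$, $\SO(3)$ is compact, and $h$ is smooth, so all integrands are continuous with compact support. A second point to check is the Euler-angle singularities at $\beta\in\{0,\pi\}$. I would avoid them by doing the integrations intrinsically with the global right-invariant fields $\widetilde X_i$, which are divergence-free for the Haar measure, rather than in coordinates.
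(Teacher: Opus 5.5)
Your proposal is correct and follows essentially the same route as the paper: Lumer--Phillips, then hypoellipticity to make the annihilating element smooth (the paper cites \cite[Proposition~A.1]{GROT-STIL-2014} for exactly your Step~1), then the cut-off test function $\varphi_n^2 h$, with the $\S$-term nonpositive and the $\A$-term of order $\frac{C}{n}\|h\|_{\Hi}^2$. There is one harmless slip: because $\int \A(\varphi_n^2h^2)\,d\mu_\Phi=0$ and $\A$ is a derivation, one gets $(\A h,\varphi_n^2h)_{\Hi}=-\tfrac12\int h^2\,\A(\varphi_n^2)\,d\mu_\Phi$ rather than $+\tfrac12$, which does not matter since you only use its absolute value; likewise, dominated rather than monotone convergence is what applies unless the $\varphi_n$ are increasing in $n$.
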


\begin{proof}
First, the dissipativity of the operator $(\L, \D)$ on $\Hi=L^2(\SE(3), \mu_{\Phi})$ follows directly from Lemma \ref{lemma:lemma2.25,SE(3)}. The operator $\L$ decomposes as $\L= \S - \A$, where $\S$ is symmetric and non-positive definite on $\Hi$, and $\A$ is antisymmetric and does not affect this property. By Lumer--Phillips, see \cite[Theorem 3.15]{Engel2010}, it remains to prove that $(I - L)(\D)$ is dense in $\Hi$. Let $f \in \Hi$ satisfy $((I-L)u,f)_{\Hi}=0$ for all $u\in \D$. We must show that $f=0$.

Note that $e^{-\Phi} f \in \L_{\text{Loc}}^{1}(\SE(3), \mu_{\Phi})$ for all $f \in \D$. Therefore, by Hörmander's condition, established in Theorem \ref{Hormanders-Condi}, and
\cite[Proposition~A.1]{GROT-STIL-2014}, it follows that
$e^{-\Phi}f\in C^\infty(\SE(3))$. Since $\Phi\in C^\infty(\mathbb R^3)$, we conclude that $f\in C^\infty(\SE(3))$. Since $\SO(3)$ is compact, it remains to apply a cut-off argument only in the non-compact variable $\xi$, as in Lemma \ref{lemma:projection-domain}.

Let $(\varphi_{n})_{n\in\mathbb{N}}\subset C_c^\infty(\mathbb{R}^3)$ be a sequence of cut-off functions such that $0\leq\varphi_{n}\leq1$ and $\varphi_{n}\rightarrow 1$ pointwise. There exists a constant $0 < C < \infty$, independent of $n$, such that $\sup |\nabla_{\xi}\varphi_{n}| \leq\frac{C}{n}$. Inspired by \cite[Proposition 5.5]{HN05}, we define $u_{n} := \varphi_{n}^{2} f$. Then,

\begin{equation}\label{summ}
\left(u_{n}, f\right)_{\Hi}=\left(L u_{n}, f\right)_{\Hi}=\left(S u_{n}, f\right)_{\Hi}-\left(A u_{n}, f\right)_{\Hi}.
\end{equation}

Since $(\S, \D)$ is symmetric and non-positive definite, and $\varphi_{n}$ is independent of the rotation variables, we have

\begin{equation}\label{sympart}
\begin{aligned}
\left(\S u_{n}, f\right)_{\Hi} = \left(\S \varphi_{n}^{2} f, f\right)_{\Hi} =  \left(\S (\varphi_{n} f), (\varphi_{n} f) \right)_{\Hi}  \leq 0.
\end{aligned}
\end{equation}

Since the antisymmetric operator $\A$ acts as a derivation, we have $\A u_{n}= \A (\varphi_{n} \varphi_{n} f)= (\A \varphi_{n}) (\varphi_{n} f) + \varphi_{n} \A (\varphi_{n} f)$. Therefore,

\begin{equation}\label{anti-1234}
\begin{aligned}
\left(A u_{n}, f\right)_{\Hi} &= \left((\A \varphi_{n}) (\varphi_{n} f), f\right)_{\Hi} + \left(\varphi_{n} \A (\varphi_{n} f), f\right)_{\Hi}\\
%&= - \left( (\varphi_{n} f), \A (\varphi_{n}) f\right)_{\Hi} + \left( \A (\varphi_{n} f), \varphi_{n} f\right)_{\Hi} \
&= \left( (\varphi_{n} f), \A (\varphi_{n}) f\right)_{\Hi} + \left( \A (\varphi_{n} f), \varphi_{n} f\right)_{\Hi}.
\end{aligned}
\end{equation}

The second term in Equation \eqref{anti-1234} cancels by the definition of the antisymmetric operator. Therefore,

\begin{equation}\label{anti-expression}
\begin{aligned}
\left(-A u_{n}, f\right)_{\Hi} %= - \left( (\varphi_{n} f), \A (\varphi_{n} f)\right)_{\Hi}
&=  -Z(\Phi)^{-1} \int_{\SE(3)} \A (\varphi_{n}) \varphi_{n} f^2  e^{-\Phi} d\xi d\nu_3\\
&= Z(\Phi)^{-1} \int_{\SE(3)} \left( v_{3}\cdot\nabla_{\xi}\varphi_{n} \right) \varphi_{n} f^2   e^{-\Phi} d\xi d\nu_3 \\
& \leq \frac{C}{n} Z(\Phi)^{-1} \int_{\SE(3)}  \varphi_{n} f^2  e^{-\Phi} d\xi d\nu_3 \\
& \leq \frac{C}{n} Z(\Phi)^{-1} \int_{\SE(3)}   f^2  e^{-\Phi} d\xi d\nu_3 = \frac{C}{n} \|f\|^{2}_{\Hi}.
\end{aligned}
\end{equation}

Hence for $n \rightarrow \infty$, \eqref{summ}, \eqref{sympart} and \eqref{anti-expression} together with Lebesgue dominated convergence imply $\|f\|_{\Hi}^{2} \leq 0$. Therefore, $f = 0$.
\end{proof}

\subsection{Verification of hypocoercivity assumptions (H)}

Next we specify the conditions on the potential $\Phi$ to ensure the hypocoercivity assumptions \textbf{(H)}. These read similarly to those in \cite{DKMS12}.

\begin{itemize}

\item[\textbf{(C1)}.] Let $\Phi: \mathbb{R}^3 \to \mathbb{R}$ be bounded from below, such that $\Phi \in C^{\infty}(\mathbb{R}^3)$, and $\gamma_{\Phi}$ is a probability measure on $(\mathbb{R}^3, \Bor(\mathbb{R}^3))$.~

\item[\textbf{(C2)}.]\label{cond:C2_SE(3)} There exists $\Lambda \in (0, \infty)$ such that the probability measure $\gamma_{\Phi}$ satisfies the Poincaré inequality of the form
\begin{equation*}
\|\nabla_{\xi} f\|_{L^2\left(\gamma_{\Phi} \right)}^2 \geq \Lambda\left\|f-(f, 1)_{L^2\left(\gamma_{\Phi} \right)}\right\|_{L^2\left(\gamma_{\Phi} \right)}^2 \quad \text{for all} \quad f \in C_c^{\infty}(\mathbb{R}^3). 
\end{equation*}

\item[\textbf{(C3)}.] There exists a constant $c < \infty$ such that
\begin{equation*}
|\nabla_{\xi}^2 \Phi(\xi)| \leq c (1+ |\nabla_{\xi} \Phi(\xi)|) \quad \text{for all} \quad \xi \in \mathbb{R}^3.
\end{equation*}

\end{itemize}

\begin{remark}

The Assumptions \textnormal{\textbf{(C1)}}--\textnormal{\textbf{(C3)}} enter the verification of the hypocoercivity conditions in different ways. Condition \textnormal{\textbf{(C2)}} provides a Poincaré inequality for the measure $\gamma_{\Phi}$, which is the key ingredient for the macroscopic coercivity estimate \textnormal{\textbf{(H3)}}. For sufficient conditions implying \textnormal{\textbf{(C2)}}, we refer to \cite{wang2013diffusion,BakryBartheCattiauxGullin2008}. Finally, the elliptic a priori estimates required to verify Assumption \textnormal{\textbf{(H4)}} rely on the combination of conditions \textnormal{\textbf{(C2)}} and \textnormal{\textbf{(C3)}}.
\end{remark}

Throughout this subsection, we assume that the potential $\Phi$ satisfies conditions \textnormal{\textbf{(C1)}}--\textnormal{\textbf{(C3)}}. We work in the Hilbert space $\Hi=L^2\left(\SE(3), \mu_{\Phi}\right)$, with core $\D=C_c^{\infty}(\SE(3))$, operator $\L=\S-\A$, and projections $\Pi_S$ and $\Pi$ defined in Definition \ref{def:ortho-projection}. We now verify the abstract hypocoercivity Assumptions \textnormal{\textbf{(H1)}}--\textnormal{\textbf{(H4)}} from Section \ref{The_Hilbert_Method_Space} for the operator $\L$ on $\SE(3)$.

\begin{proposition}[Assumption \textnormal{\textbf{(H1)}}]\label{prop:H1} 

The algebraic relation
\begin{equation}
  \Pi \A \Pi = 0 \quad \text{on} \,\, \D.
\end{equation}
\end{proposition}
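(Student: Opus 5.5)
We use the explicit formula from Lemma~\ref{lemma:projection-domain}: for $f\in\D$, $\A\Pi f = -v_3\cdot\nabla_\xi f_{\S}$, with $f_{\S}=\Pi_{\S}f\in C_c^\infty(\mathbb{R}^3)$. It therefore suffices to show that $\Pi$ annihilates $g:=-v_3\cdot\nabla_\xi f_{\S}$. This function lies in $\D$, so $\Pi g=\Pi_{\S}g-(g,1)_{\Hi}$ is well defined.

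First compute $\Pi_{\S}g$. Since $f_{\S}$ does not depend on the rotation variable, we can pull $\nabla_\xi f_{\S}(\xi)$ out of the $\SO(3)$ integral:
\begin{equation*}
\Pi_{\S}g(\xi) = -\Bigl(\int_{\SO(3)} v_3(R)\,d\nu_3(R)\Bigr)\cdot\nabla_\xi f_{\S}(\xi).
\end{equation*}
The key step is showing $\int_{\SO(3)} Re_3\,d\nu_3(R)=0$. We will do this by invariance of Haar measure: for any fixed $Q\in\SO(3)$, the substitution $R\mapsto QR$ gives $m:=\int Re_3\,d\nu_3 = Q m$. Choosing $Q$ as a rotation by $\pi$ about an axis orthogonal to $m$ gives $Qm=-m$, so $m=0$. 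As a cross-check we can compute directly in ZXZ Euler angles with density $\frac{1}{8\pi^2}\sin\beta$, using $v_3=(s_\alpha s_\beta,\,-c_\alpha s_\beta,\,c_\beta)^{\top}$. The first two components vanish after integrating over $\alpha\in[0,2\pi]$, and the third gives $\int_0^\pi\cos\beta\sin\beta\,d\beta=0$. Hence $\Pi_{\S}g=0$.

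It then follows that $(g,1)_{\Hi}=\int_{\mathbb{R}^3}\Pi_{\S}g\,d\gamma_\Phi=0$ by Fubini, since $\mu_\Phi=\gamma_\Phi\otimes\nu_3$ and $g$ is compactly supported, so integrability is not an issue. Therefore $\Pi\A\Pi f=\Pi_{\S}g-(g,1)_{\Hi}=0$ for all $f\in\D$.

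The only step needing care is the vanishing of the Haar average of $v_3$. Both arguments above are elementary, so no real obstacle is expected. It is also worth remarking that the constant part of $\Pi$ contributes nothing, because $\A 1=0$ was already established in Lemma~\ref{lemma:projection-domain}.
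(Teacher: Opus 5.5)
Your proof is correct and follows essentially the same route as the paper. You reduce to $\A\Pi f=-v_3\cdot\nabla_\xi f_{\S}$, show that the Haar average of $v_3$ vanishes so that $\Pi_{\S}\A\Pi f=0$, and then dispose of the constant term. The only minor differences are that you obtain $(\A\Pi f,1)_{\Hi}=0$ via Fubini from $\Pi_{\S}\A\Pi f=0$ rather than via antisymmetry and $\A 1=0$, and that you back up the Euler-angle integrals with a coordinate-free Haar-invariance argument.
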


\begin{proof}
Let $ f\in \D$.~Since
\begin{equation*}
\Pi f=\Pi_{\S} f-(f,1)_{\Hi},
\end{equation*}
and $\A 1=0$, we obtain $\A \Pi f=\A \Pi_{\S} f$.~From Lemma \ref{lemma:lemma2.25,SE(3)}, we have that the term $\A \Pi f= - v_{3} \cdot \nabla_{\xi} g$ where $g= \Pi_{\S} f$, which in coordinates is given by 
\begin{equation*}
    \A \Pi f =- ( s_\alpha s_\beta \nabla_{\xi_1} f_{\S} -c_\alpha s_\beta \nabla_{\xi_2} f_{\S} + c_\beta \nabla_{\xi_3} f_{\S}). 
\end{equation*}

Let us note that the coefficients depend on the Euler angles $\alpha, \beta$ and $\gamma$, hence, if we apply the orthogonal projection $\Pi_{\S}$ in Definition \ref{def:ortho-projection}, we have the following
\begin{equation*}
    \begin{aligned}
&\Pi_{\S} \A \Pi f %=\int_{\SO(3)}\left(- v_{3} \cdot \nabla_{\xi} f_{\S} \right) \,\, d \nu_3 \\
=-\int_{\SO(3)}\left( s_\alpha s_\beta \nabla_{\xi_1} f_{\S} -c_\alpha s_\beta \nabla_{\xi_2} f_{\S} + c_\beta \nabla_{\xi_3} f_{\S} \right) \,\, d\nu_3\\
&=- \int_{\SO(3)}\left( s_\alpha s_\beta \nabla_{\xi_1} f_{\S} \right) \,\, d\nu_3 + \int_{\SO(3)}\left( c_\alpha s_\beta \nabla_{\xi_2} f_{\S} \right) \,\, d\nu_3 - \int_{\SO(3)}\left( c_\beta \nabla_{\xi_3} f_{\S} \right) \,\, d\nu_3 \\
&= -\underbrace{\int_{\SO(3)} s_\alpha s_\beta \,\, d\nu_3}_\text{=0} \cdot \nabla_{\xi_1} f_{\S}  + \underbrace{\int_{\SO(3)}  c_\alpha s_\beta  \,\, d\nu_3 }_\text{=0} \cdot \nabla_{\xi_2} f_{\S} - \underbrace{\int_{\SO(3)} c_\beta \,\, d\nu_3 }_\text{=0} \cdot \nabla_{\xi_3} f_{\S}=0.
    \end{aligned}
\end{equation*}

This shows $\Pi_{\S} \A \Pi f=0$ for all $f\in \D$.~Moreover, since $\A$ is antisymmetric, we have that
\begin{equation*}
(\A \Pi f,1)_{\Hi} = -(\Pi f,\A1)_{\Hi} = 0.
\end{equation*}

Therefore,
\begin{equation*}
\begin{aligned}
\Pi \A \Pi f = \Pi_S \A \Pi f-(\A \Pi f,1)_{\Hi} =0 \quad\text{on} \quad \mathcal{D}.
\end{aligned}
 \qedhere
 \end{equation*}
\end{proof}

\begin{remark}
The vanishing of the above integrals is a direct consequence of the properties of the normalized Haar measure on $\SO(3)$ by rotation invariance, i.e., the first-order spherical harmonics (such as $s_\alpha s_\beta$, $c_\alpha s_\beta$, $c_\beta$) have zero average over $\SO(3)$.~
\end{remark}

%------------------------------------------------------------------
\begin{proposition}[Assumption \textnormal{\textbf{(H2)}}, microscopic coercivity]\label{Assumption-H2}  There exists $\Lambda_m >0$ such that 
\begin{equation}\label{3.14}
    -(\S f, f)_{\Hi} \geq \Lambda_m \|(\I- \Pi_{\S})f\|^2_{\Hi} \quad \text{for all} \quad f \in \D.
\end{equation}
In fact, one may take $\Lambda_m=\sigma^2$.~
\end{proposition}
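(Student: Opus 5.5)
The plan is to reduce (H2) to a spectral gap inequality for the Laplacian on the compact fibre $\SO(3)$, applied for each fixed $\xi\in\mathbb{R}^3$ and then integrated against $\gamma_\Phi$.

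\textbf{Step 1 (reduction to a fibrewise inequality).} Let $f\in\D$. By Lemma~\ref{lemma:lemma2.25,SE(3)}(i), integration by parts with respect to $\nu_3$ gives
\begin{equation*}
-(\S f,f)_{\Hi}=\frac{\sigma^2}{2}\int_{\mathbb{R}^3}\int_{\SO(3)}\sum_{i=1}^3\bigl(\widetilde{X}_i f\bigr)^2\,d\nu_3\,d\gamma_\Phi(\xi).
\end{equation*}
By Definition~\ref{def:ortho-projection}, $(\I-\Pi_{\S})f(\xi,\cdot)=f(\xi,\cdot)-\int_{\SO(3)}f(\xi,R)\,d\nu_3(R)$ is the fibrewise mean-zero part of $f$. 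By Fubini's theorem it therefore suffices to prove, for every $g\in C^\infty(\SO(3))$,
\begin{equation*}
\int_{\SO(3)}\sum_{i=1}^3(\widetilde{X}_i g)^2\,d\nu_3\;\ge\;2\int_{\SO(3)}\Bigl(g-\int_{\SO(3)} g\,d\nu_3\Bigr)^2 d\nu_3 .
\end{equation*}
Applying this to $g=f(\xi,\cdot)$ and integrating in $\gamma_\Phi$ yields \eqref{3.14} with $\Lambda_m=\frac{\sigma^2}{2}\cdot 2=\sigma^2$.

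\textbf{Step 2 (spectral gap on $\SO(3)$ via Peter--Weyl).} The operator $-\sum_i\widetilde{X}_i^2$ is the Casimir element of $\mathfrak{so}(3)$ acting through the right-regular representation. The relations $[\widetilde{X}_i,\widetilde{X}_j]=\varepsilon_{ijk}\widetilde{X}_k$ from \eqref{eq:commutator-relations} identify $J_k:=i\widetilde{X}_k$ with standard angular momentum operators, up to sign and orientation conventions, which do not affect $\sum_k \widetilde{X}_k^2$. By the Peter--Weyl theorem, $L^2(\SO(3),\nu_3)$ decomposes orthogonally into the spans of the Wigner functions $D^{\ell}_{mn}$, $\ell=0,1,2,\dots$. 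On each such block the Casimir acts as the scalar $\ell(\ell+1)$. The constants form exactly the block $\ell=0$, and the smallest nonzero eigenvalue is $2$, attained at $\ell=1$. This block contains the entries of $R$, for instance $c_\beta$ and $s_\alpha s_\beta$, in line with the Remark after Proposition~\ref{prop:H1}. Expanding $g$ in this orthonormal basis and using the symmetry of $-\sum_i\widetilde{X}_i^2$ then gives the fibrewise inequality of Step 1 directly.

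\textbf{Main obstacle.} I expect the delicate point to be the normalization: one must confirm that the vector fields in \eqref{eq:vector_fields_on_SE(3)} are unit-normalized generators, so that the eigenvalues are exactly $\ell(\ell+1)$ and not a rescaling of them. One must also confirm that only integer $\ell$ occur, since half-integer representations belong to $\mathrm{SU}(2)$ and not to $\SO(3)$. I would check the normalization concretely. First, the Lie algebra relations already fix the scaling. Second, as a direct test, take $h=c_\beta=(v_3)_3$. From Lemma~\ref{lemma_2_important} and the coordinate formulas one gets $\widetilde{X}_3 c_\beta=0$, and $\widetilde{X}_1^2 c_\beta$, $\widetilde{X}_2^2 c_\beta$ can be computed in Euler angles. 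The check is that $-\sum_i\widetilde{X}_i^2 c_\beta=2c_\beta$. For the integrality, the Wigner functions $e^{-im\alpha}d^\ell_{mn}(\beta)e^{-in\gamma}$ are $2\pi$-periodic in $\alpha$ and $\gamma$ only when $\ell$ is an integer.
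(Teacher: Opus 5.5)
Your proposal is correct and follows essentially the same route as the paper: both reduce (H2) to the fibrewise Poincar\'e inequality $\|\nabla_{\SO(3)} h\|^2_{L^2(\nu_3)} \ge \lambda_1 \|(I-\Pi_{\S})h\|^2_{L^2(\nu_3)}$ with $\lambda_1=2$, apply it to $h=f(\xi,\cdot)$, and integrate against $\gamma_\Phi$ to obtain $\Lambda_m=\frac{\sigma^2}{2}\cdot 2=\sigma^2$. The only difference is that the paper cites the Rayleigh--Ritz characterization and Lauret's theorem for $\lambda_1=2$, whereas you derive it from Peter--Weyl and the Casimir eigenvalues $\ell(\ell+1)$; your normalization check does hold, since the coordinate fields give $-\sum_{i=1}^3\widetilde{X}_i^2 c_\beta = 2c_\beta$.
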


\begin{proof}
From the Rayleight-Ritz characterization on the compact Lie group $\SO(3)$, see \cite[Chapter I, Section 5]{Chavel1984}, we have the following Poincaré inequality on $\SO(3)$:
\begin{equation}\label{eq:Poinca_SO(3)}
     %-(\S f,f)_{L^2(\nu_{3})} 
     %= -\left(\frac{\sigma^2}{2} \Delta_{\SO(3)}f,f\right)_{L^2(\mu_{\SO(3)})}
    \|\nabla_{\SO(3)} h\|_{L^2(\nu_{3})}^2 
     %\geq                       \quad \text{for} \, f \in C^{\infty}(\SO(3)),
     \geq \lambda_{1} \|(I-\Pi_{\S})h\|_{L^2(\nu_{3})}^2 \quad \text{for}  \,\, h \in C^{\infty}(\SO(3)),
    % = \sigma^2 \|(I-\Pi_{\S})f\|^2,
\end{equation}

where $\lambda_1$ is the first non-zero eigenvalue of $- \Delta_{\SO(3)}$.~By \cite[Theorem 1.2]{Lauret_2018}, we have $\lambda_1=2$.~Let $f \in \D$.~For each fixed $\xi \in \mathbb{R}^{3}$, let us consider $h_\xi: \SO(3)\rightarrow\mathbb{R}$, $h_\xi(\cdot):=f(\xi,\cdot)$. Then \eqref{eq:Poinca_SO(3)} applied to $h_\xi$ and integrated with respect to $\gamma_{\Phi}$ yields
\begin{equation*}
-(\S f,f)_{\Hi} \geq \sigma^2\|(\I-\Pi_{\S})f\|_{\Hi}^2 \quad \text{for} \,\, f \in \D.
\end{equation*}
Therefore, the microscopic coercivity inequality \eqref{3.14} holds with
$\Lambda_m=\sigma^2$.~
\end{proof}

To verify Assumption \textnormal{\textbf{(H3)}}, we compute the operator $ \A^2 \Pi$ and $G:= \Pi \A^2 \Pi$ on the core $\D$ in the following lemma.

%-----------------------------------------------------
%\newpage

\begin{lemma}\label{lemma:4.19}
Let $f\in \D$.~Then the following expression holds
\begin{equation}\label{2.22_SE(3)}
\begin{aligned}
\A^2 \P f &= v_{3}^{\top} \, \nabla_{\xi}^2 f_{\S} \,  v_{3}  - \frac{1}{2}   (\nabla_{\xi} \Phi)^{\top} \, (I - v_{3} \otimes v_{3}) \, \nabla_{\xi} f_{\S}.
\end{aligned}
\end{equation}  
\end{lemma}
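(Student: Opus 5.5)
The plan is to compute $\A^2\P f$ directly by applying $\A$ once more to the expression for $\A\P f$ from Lemma~\ref{lemma:projection-domain}. Fix $f\in\D$ and write $f_{\S}=\Pi_{\S}f\in C_c^\infty(\mathbb{R}^3)$. By \eqref{2.21_SE(3)} we have $\A\P f=g$ with
\begin{equation*}
g:=-v_3\cdot\nabla_\xi f_{\S}.
\end{equation*}
The same lemma shows that $g\in\D$. Hence $\A^2\P f=\A g$ can be evaluated with the explicit formula for $\A$ in Proposition~\ref{Anti_Construction}. This gives three terms: the transport term $-\widetilde{X}_6 g$, and the two torque terms involving $\widetilde{X}_1 g$ and $\widetilde{X}_2 g$.

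For the transport term, $v_3$ depends only on the rotation variable and $\widetilde{X}_6=v_3\cdot\nabla_\xi$ differentiates only in $\xi$. Therefore
\begin{equation*}
-\widetilde{X}_6 g = v_3\cdot\nabla_\xi\bigl(v_3\cdot\nabla_\xi f_{\S}\bigr)=v_3^{\top}\,\nabla_\xi^2 f_{\S}\,v_3,
\end{equation*}
which is the first term of \eqref{2.22_SE(3)}.

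For the torque terms, $\nabla_\xi f_{\S}$ does not depend on the rotation variables, so $\widetilde{X}_i$ for $i=1,2$ acts only on $v_3$. Lemma~\ref{lemma_2_important} gives $\widetilde{X}_1 v_3=-v_2$ and $\widetilde{X}_2 v_3=v_1$, so
\begin{equation*}
\widetilde{X}_1 g=v_2\cdot\nabla_\xi f_{\S},\qquad \widetilde{X}_2 g=-v_1\cdot\nabla_\xi f_{\S}.
\end{equation*}
Substituting these into $-\tfrac12\bigl((\nabla_\xi\Phi\cdot v_2)\widetilde{X}_1 g-(\nabla_\xi\Phi\cdot v_1)\widetilde{X}_2 g\bigr)$, the two minus signs combine and give
\begin{equation*}
-\tfrac12\,(\nabla_\xi\Phi)^{\top}\bigl(v_1\otimes v_1+v_2\otimes v_2\bigr)\nabla_\xi f_{\S}.
\end{equation*}
Since $\{v_1,v_2,v_3\}$ is an orthonormal frame (Observation~\ref{observation_1}), we have $v_1\otimes v_1+v_2\otimes v_2=I-v_3\otimes v_3$. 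This gives the second term of \eqref{2.22_SE(3)}.

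The main point needing care is the sign bookkeeping in the torque terms. An error in the signs of $\widetilde{X}_1 v_3$ or $\widetilde{X}_2 v_3$ would produce $v_1\otimes v_1-v_2\otimes v_2$ instead of the projection $I-v_3\otimes v_3$. I would check these two identities against the explicit Euler-angle formulas for $v_3=(s_\alpha s_\beta,-c_\alpha s_\beta,c_\beta)^{\top}$ and \eqref{eq:vector_fields_on_SE(3)}. These identities hold on $0<\beta<\pi$, which is a $\nu_3$-full set. Since both sides of \eqref{2.22_SE(3)} are smooth, the identity then extends to all of $\SE(3)$.
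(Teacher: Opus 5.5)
Your proposal is correct and follows the paper's proof essentially step for step. The paper also applies $\A$ to $-v_3\cdot\nabla_\xi f_{\S}$ and obtains $v_3^{\top}\nabla_\xi^2 f_{\S}\,v_3$ from the transport term; for the torque terms it uses $\widetilde{X}_1(v_3)=-v_2$ and $\widetilde{X}_2(v_3)=v_1$, checked by explicit Euler-angle differentiation as you propose, and then $v_1\otimes v_1+v_2\otimes v_2=I-v_3\otimes v_3$, so your sign bookkeeping matches and your continuity remark extending the identity from $0<\beta<\pi$ to all of $\mathrm{SE}(3)$ is a small point the paper leaves implicit.
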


\begin{proof}
Let us compute $\A^2 \Pi$ as follows
\begin{equation}\label{eq:AAPf_term_SE(3)}
    \begin{aligned}
        &\A^2 \Pi f  = - \left(  - \widetilde{X}_6   -  \frac{1}{2}  \left( \nabla_{\xi} \Phi \cdot v_{2} \,\, \widetilde{X}_{1} - \nabla_{\xi} \Phi \cdot v_{1} \,\, \widetilde{X}_{2} \right) \right) ( v_{3} \cdot \nabla_{\xi} f_{\S} )  \\
        &=  \underbrace{ \widetilde{X}_6 \left( v_{3} \cdot \nabla_{\xi} f_{\S} \right) }_\text{TI} +  \underbrace{\frac{1}{2}  \left( (\nabla_{\xi} \Phi \cdot v_{2}) \widetilde{X}_{1} \left(  v_{3} \cdot \nabla_{\xi} f_{\S}  \right) - (\nabla_{\xi} \Phi \cdot v_{1}) \widetilde{X}_{2} \left( v_{3} \cdot \nabla_{\xi} f_{\S}  \right) \right)}_\text{TII}.\\
        %&= \underbrace{(v_{3} \otimes v_{3}) \nabla_{\xi}^2 g(\xi)}_\text{TI} - \underbrace{\frac{1}{2}  \left( (X_6^r(\nabla_{\xi} \Phi(\xi) \cdot v_{2})) \widetilde{X}_{1}^r - X_6^r((\nabla_{\xi} \Phi(\xi) \cdot v_{1})) \widetilde{X}_{2}^r \right)}_\text{TII}.
    \end{aligned}
\end{equation}

%\improvement{Not bold letters for TI or TII, where I wrot}
%%%%%%%%%%
\textbf{Term TI}. %\improvement{Here I should add a small remark on the proof of TI} 
To compute TI, we proceed as follows
\begin{equation}\label{termI-}
    \begin{aligned}
       & \widetilde{X}_6^r \left(  v_{3} \cdot \nabla_{\xi} f_{\S} \right) = (v_{3} \cdot \nabla_{\xi}) (v_{3} \cdot \nabla_{\xi} f_{\S} ) = \sum_{i=1}^3 v_{3,i} \partial_{\xi_i} \left( \sum_{j=1}^3 v_{3,j} \partial_{\xi_j} f_{\S}  \right) \\
        &=  \sum_{i,j=1}^3 v_{3,i}  v_{3,j} \, \partial_{\xi_i}  \partial_{\xi_j} f_{\S} = \sum_{i,j=1}^3 v_{3,i} v_{3,j} \partial^2_{\xi_i \xi_j}f_{\S} =v_{3}^{\top} \, \nabla_{\xi}^2 f_{\S} \,  v_{3}.
        %= (v_{3} \otimes v_{3}) \, \nabla_{\xi}^2 g(\xi). 
    \end{aligned}
\end{equation}

\textbf{Term TII}. We now compute TII. Recall that the vector fields $\widetilde{X}_1, \widetilde{X}_2$, and $\widetilde{X}_3$ are the rotation ones from the Lie group $\SO(3)$, and therefore independent of the variable $\xi$.~Consequently, their action affects only the coefficients of $ v_{3} = \widetilde{X}_6$.~We need to consider only the following contributions given below 
%\footnote{$\widetilde{X}_{1} \left( v_{3} \cdot \nabla_{\xi} f_{\S}(\xi)  \right)= \sum_{j=1}^3 \widetilde{X}_{1} (v_{3,j}) \partial_{\xi_{j}} f_{\S} (\xi)$} 
\begin{equation*}
    \begin{aligned}
      \widetilde{X}_{1} \left(  v_{3} \cdot \nabla_{\xi} f_{\S}  \right) &=   \widetilde{X}_{1} \left( v_{3}  \right) \cdot \nabla_{\xi} f_{\S} \\
      &=  \left(  \frac{s_{\gamma}}{s_{\beta}} c_{\alpha} s_{\beta} + c_{\gamma} s_{\alpha} c_{\beta}, \frac{s_{\gamma}}{s_{\beta}} s_{\alpha} s_{\beta} - c_{\gamma} c_{\alpha} c_{\beta}, -s_\beta c_{\gamma} \right)   \cdot \nabla_{\xi} f_{\S} \\
      &= ( s_{\gamma} c_{\alpha}  + c_{\gamma} s_{\alpha} c_{\beta} , s_{\gamma} s_{\alpha} - c_{\gamma} c_{\alpha} c_{\beta} , -s_\beta c_{\gamma}  )   \cdot \nabla_{\xi} f_{\S} \\   
      &=- v_{2}  \cdot \nabla_{\xi} f_{\S},\\
      \widetilde{X}_{2} \left(  v_{3} \cdot \nabla_{\xi} f_{\S}  \right) %\widetilde{X}_{2}^r \left( v_{3}  \right) \cdot \nabla_{\xi} g(\xi) 
       &= \left( \frac{c_\gamma}{s_\beta} c_\alpha s_\beta - s_\gamma s_\alpha c_\beta,  \frac{c_\gamma}{s_\beta} s_\alpha s_\beta - s_\gamma (-c_\alpha c_\beta) , -s_\gamma (-c_\beta)   \right)  \cdot \nabla_{\xi} f_{\S} \\
       &= ( c_\gamma c_\alpha - s_\gamma s_\alpha c_\beta,  c_\gamma s_\alpha  + s_\gamma c_\alpha c_\beta , s_\gamma c_\beta   )  \cdot \nabla_{\xi} f_{\S} \\
       &=  v_{1} \cdot \nabla_{\xi} f_{\S}.
    \end{aligned}
\end{equation*}
Therefore, 
\begin{equation}\label{termII-}
    \begin{aligned}
    \text{TII}&= \frac{1}{2}  \left( (\nabla_{\xi} \Phi \cdot v_{2}) \widetilde{X}_{1} \left( v_{3} \cdot \nabla_{\xi} f_{\S}  \right) - (\nabla_{\xi} \Phi \cdot v_{1}) \widetilde{X}_{2} \left( v_{3} \cdot \nabla_{\xi} f_{\S}  \right) \right)\\
    & =  \frac{1}{2}  \left( (\nabla_{\xi} \Phi \cdot v_{2})  \left( - v_{2} \cdot \nabla_{\xi} f_{\S}  \right) - (\nabla_{\xi} \Phi \cdot v_{1}) \left( v_{1} \cdot \nabla_{\xi} f_{\S}  \right) \right)  \\
    &= - \frac{1}{2}  \left( (\nabla_{\xi} \Phi \cdot v_{2})  \left( v_{2} \cdot \nabla_{\xi} f_{\S}  \right) + (\nabla_{\xi} \Phi \cdot v_{1}) \left( v_{1} \cdot \nabla_{\xi} f_{\S}  \right) \right)\\
    &= - \frac{1}{2}  \left((\nabla_{\xi} \Phi)^{\top} \, (v_{2} \otimes v_{2}) \, \nabla_{\xi} f_{\S}   + (\nabla_{\xi} \Phi)^{\top} \, (v_{1} \otimes v_{1}) \, \nabla_{\xi} f_{\S}  \right)\\
    &= - \frac{1}{2} (\nabla_{\xi} \Phi)^{\top} \, (I - v_{3} \otimes v_{3}) \, (\nabla_{\xi} f_{\S} ).
    %&= \frac{1}{2}  \left( I - v_{3}^{\top} \cdot (\nabla_{\xi} \Phi(\xi) \otimes \nabla_{\xi} g(\xi)) \cdot v_{3}  \right)\\
    %&= \frac{1}{2}(\nabla_{\xi} g) \left( v_{1}^{\top} \, \nabla_{\xi}^2 g(\xi) \,  v_{1} + v_{2}^{\top} \, \nabla_{\xi}^2 g(\xi) \,  v_{2} \right) (\nabla_{\xi} \Phi) 
    %&= \frac{1}{2}(\nabla_{\xi} g) \left( I_3 - v_{3} \otimes v_{3} \right) (\nabla_{\xi} \Phi) \\ 
    \end{aligned}
\end{equation}

Hence, adding the terms \text{TI} and \text{TII} yields the desired result in Equation \eqref{2.22_SE(3)}.
\end{proof}

\begin{lemma}\label{lem:Auxiliary_Lemma_Computations}
Let $M(\xi)=\left(a_{ij}(\xi) \right)_{i,j=1}^3$ be a $3\times3$ matrix depending only on
$\xi\in\mathbb{R}^3$.~ Then
\begin{equation}
    \begin{aligned}
      \Pi_{\S} \left(v_{3}^{\top}  M  v_{3}\right) =
\frac{1}{3}\operatorname{tr}(M).
    \end{aligned}
\end{equation}
In particular, taking $M=I$, one has $\Pi_{\S} \left(v_{3}^{\top}Iv_{3}\right)=\frac{1}{3
}\operatorname{tr}(I)=1$.~  
\end{lemma}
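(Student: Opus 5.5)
Since $M$ depends only on $\xi$, it can be pulled out of the integral over $\SO(3)$. We expand
\begin{equation*}
v_{3}^{\top} M v_{3}=\sum_{i,j=1}^3 a_{ij}(\xi)\, v_{3,i}v_{3,j},
\end{equation*}
so that, by linearity of $\Pi_{\S}$,
\begin{equation*}
\Pi_{\S}\bigl(v_{3}^{\top}Mv_{3}\bigr)=\sum_{i,j=1}^3 a_{ij}(\xi)\int_{\SO(3)} v_{3,i}v_{3,j}\,d\nu_3 .
\end{equation*}
The claim therefore reduces to the second-moment identity
\begin{equation*}
\int_{\SO(3)} v_{3,i}v_{3,j}\,d\nu_3=\tfrac13\delta_{ij}.
\end{equation*}
Here $v_3=Re_3=(s_\alpha s_\beta,\,-c_\alpha s_\beta,\,c_\beta)^{\top}$ is read off from the third column of $R$.

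The main step is evaluating these six integrals, and I would do this directly in ZXZ Euler angles with $d\nu_3=\frac{1}{8\pi^2}\sin\beta\,d\alpha\,d\beta\,d\gamma$. The $\gamma$-integration contributes a factor $2\pi$, since $v_3$ does not depend on $\gamma$. For the off-diagonal entries:
\begin{itemize}
\item the $(1,2)$ entry contains $s_\alpha c_\alpha$, which integrates to zero over $[0,2\pi]$ in $\alpha$;
\item the $(1,3)$ and $(2,3)$ entries contain a single factor $s_\alpha$ or $c_\alpha$, which also integrates to zero in $\alpha$.
\end{itemize}
For the diagonal entries:
\begin{itemize}
\item the $(3,3)$ entry gives $\frac{1}{2}\int_0^\pi \cos^2\beta\sin\beta\,d\beta=\frac12\cdot\frac23=\frac13$;
\item the $(1,1)$ entry gives $\frac{1}{2}\cdot\frac{1}{2\pi}\int_0^{2\pi}s_\alpha^2\,d\alpha\int_0^\pi \sin^3\beta\,d\beta=\frac12\cdot\frac12\cdot\frac43=\frac13$, and the $(2,2)$ entry is identical with $c_\alpha^2$ in place of $s_\alpha^2$.
\end{itemize}

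As a cross-check, or as a coordinate-free alternative, one can use invariance of the Haar measure. The matrix $K:=\int_{\SO(3)} v_3 v_3^{\top}\,d\nu_3$ satisfies $QKQ^{\top}=K$ for every $Q\in\SO(3)$, by left invariance under $R\mapsto QR$. Irreducibility of the standard representation then forces $K=cI$, and taking the trace with $|v_3|=1$ gives $3c=1$. I expect no real obstacle; the only care needed is using the correct third column of $R$ and the correct normalization $1/(8\pi^2)$ of $\nu_3$, since these are what produce the factor $\frac13$.

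Combining the steps gives $\Pi_{\S}(v_3^{\top}Mv_3)=\sum_i a_{ii}/3=\frac13\operatorname{tr}(M)$. The special case $M=I$ is then immediate, since $\operatorname{tr}(I)=3$; it is also consistent with $v_3^{\top}v_3=1$ pointwise, which serves as a check on the normalization.
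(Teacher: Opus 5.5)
Your proposal is correct and follows the paper's proof. Both expand $v_3^{\top}Mv_3=\sum_{i,j}a_{ij}v_{3,i}v_{3,j}$, pull the $\xi$-dependent coefficients out of $\Pi_{S}$, and reduce to the second moments $\int v_{3,i}v_{3,j}\,d\nu_3=\frac13\delta_{ij}$, evaluated in ZXZ Euler angles; your explicit values, including the normalization $1/(8\pi^2)$, check out. Your additional Haar-invariance argument ($QKQ^{\top}=K$, Schur's lemma, $\operatorname{tr}K=1$) is a coordinate-free cross-check the paper does not use, and it makes the vanishing of the off-diagonal moments, which the paper only asserts ``analogously'', immediate.
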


\begin{proof}
Let $M(\xi)=\bigl(a_{ij}(\xi)\bigr)$ for $i,j=1,\cdots,3$ be a $3\times3$ matrix depending only on $\xi\in\mathbb R^3$.~ Since $M$ depends only on $\xi$, its coefficients are independent of the rotation variables $R$ and therefore commute with the projection operator $\Pi$, as follows
\begin{equation*}
    \begin{aligned}
    \Pi_{\S} \left(v_{3}^{\top} M v_{3}\right) & =\Pi_{\S} \left(\sum_{i, j=1}^3 a_{i j} v_{3, i} v_{3, j}\right) = \sum_{i, j=1}^3 a_{i j} \Pi_{\S} \left(v_{3, i} v_{3, j}\right)  =\sum_{i=1}^3 a_{i i} \frac{1}{3}  =\frac{1}{3} \operatorname{tr}(M),  \end{aligned}
\end{equation*}

where we used that $\Pi_{\S} \left(v_{3, i} v_{3, j}\right)=\frac{1}{3} \delta_{i j}$, i.e., only the diagonal terms survive as follows
\begin{equation}\label{integral-SO(3)}
\begin{aligned}
\int_{\SO(3)} s_\alpha^2 s_\beta^2\,d\nu_3 = \frac{1}{3},
\,
\int_{\SO(3)} c_\alpha^2 s_\beta^2\,d\nu_3 = \frac{1}{3},
\int_{\SO(3)} c_\beta^2\,d\nu_3 = \frac{1}{3},
\,
\int_{\SO(3)} s_\alpha c_\alpha s_\beta^2\,d\nu_3 = 0.
\end{aligned}
\end{equation}
The remaining off-diagonal terms vanish analogously.
\end{proof}

%%%%%%%%%%%%%%%%%%%%%%%%%%%%%%%%%%%%%%%%%%%%%%%%%%%%%%%%%%%%%%%%%%

\begin{proposition}\label{prop:auxiliary1-assump(H3)}
The operator $G:=\Pi\A^2\Pi$ satisfies, for every $f\in\D$,

\begin{equation}\label{eq:def-operator-G}
\begin{aligned}
Gf = \Pi \A^2 \Pi f &=  \frac{1}{3} \left( \Delta_{\xi} \, f_{\S} - \nabla_{\xi} \Phi \cdot \nabla_{\xi} \, f_{\S} \right). 
\end{aligned}
\end{equation}
\end{proposition}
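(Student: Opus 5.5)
We start from Lemma \ref{lemma:4.19}, which for $f\in\D$ gives
\begin{equation*}
\A^2\Pi f = v_3^{\top}\nabla_\xi^2 f_{\S}\, v_3 - \tfrac12 (\nabla_\xi\Phi)^{\top}(I-v_3\otimes v_3)\nabla_\xi f_{\S}.
\end{equation*}
We then apply $\Pi = \Pi_{\S} - (\cdot,1)_{\Hi}$ to this expression, treating the two summands separately.

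\textbf{Action of $\Pi_{\S}$.} Since $f_{\S}$ and $\Phi$ depend only on $\xi$, both summands have the form $v_3^{\top} M(\xi) v_3$ or $\operatorname{tr}$-type expressions with $\xi$-dependent coefficients. Lemma \ref{lem:Auxiliary_Lemma_Computations} then applies directly.

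For the first term we take $M=\nabla_\xi^2 f_{\S}$, which gives $\Pi_{\S}(v_3^{\top}\nabla_\xi^2 f_{\S} v_3) = \frac13 \Delta_\xi f_{\S}$.

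For the second term we rewrite $(\nabla_\xi\Phi)^{\top}(I-v_3\otimes v_3)\nabla_\xi f_{\S} = \nabla_\xi\Phi\cdot\nabla_\xi f_{\S} - v_3^{\top} M v_3$ with $M=\nabla_\xi\Phi\otimes\nabla_\xi f_{\S}$. Since $\operatorname{tr}M=\nabla_\xi\Phi\cdot\nabla_\xi f_{\S}$, the lemma yields the average $(1-\frac13)\nabla_\xi\Phi\cdot\nabla_\xi f_{\S}$. After the prefactor $-\frac12$ this becomes $-\frac13\nabla_\xi\Phi\cdot\nabla_\xi f_{\S}$. Adding both contributions gives $\Pi_{\S}\A^2\Pi f=\frac13(\Delta_\xi f_{\S}-\nabla_\xi\Phi\cdot\nabla_\xi f_{\S})$.

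\textbf{The constant part.} It remains to show $(\A^2\Pi f,1)_{\Hi}=0$. By Lemma \ref{lemma:projection-domain} we have $\A\Pi f\in\D\subset D(\A)$, and by its proof $1\in D(\A)$ with $\A 1=0$. Antisymmetry of the closed operator $\A$ therefore gives
\begin{equation*}
(\A^2\Pi f,1)_{\Hi}=-(\A\Pi f,\A1)_{\Hi}=0.
\end{equation*}
As a consistency check, integrating by parts against $e^{-\Phi}$ shows that $\int(\Delta_\xi f_{\S}-\nabla_\xi\Phi\cdot\nabla_\xi f_{\S})\,d\gamma_\Phi=0$, since this operator is the $\gamma_\Phi$-symmetric Ornstein--Uhlenbeck-type generator. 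Hence $\Pi\A^2\Pi f=\Pi_{\S}\A^2\Pi f$, which proves \eqref{eq:def-operator-G}.

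\textbf{Main obstacle.} The only delicate step is the correct bookkeeping of the trace identity for the non-symmetric matrix $\nabla_\xi\Phi\otimes\nabla_\xi f_{\S}$. Lemma \ref{lem:Auxiliary_Lemma_Computations} only needs $\Pi_{\S}(v_{3,i}v_{3,j})=\frac13\delta_{ij}$, so symmetry of $M$ is irrelevant, and the factor $\frac23\cdot\frac12=\frac13$ matches the Laplacian coefficient. The remaining steps are direct substitutions.
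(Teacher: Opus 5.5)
Your proof is correct and follows essentially the paper's route. You project the two terms of Lemma~\ref{lemma:4.19} with $\Pi_{\S}(v_{3,i}v_{3,j})=\frac13\delta_{ij}$, using the rank-one matrix $M=\nabla_\xi\Phi\otimes\nabla_\xi f_{\S}$ where the paper uses $\int (I-v_3\otimes v_3)\,d\nu_3=\frac23 I$, and you kill the mean via $(\A^2\Pi f,1)_{\Hi}=-(\A\Pi f,\A 1)_{\Hi}=0$ instead of the paper's integration by parts against $e^{-\Phi}$ (your ``consistency check''); both routes are valid.
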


\begin{proof}

Let $f\in \D$.~Note that $\Pi f=\Pi_{\S} f-(f,1)_{\Hi}=f_{\S}-(f,1)_{\Hi}$.~Furthermore, since $(f,1)_{\Hi}$ is a constant, we have $\A(f,1)_{\Hi}=0$.~Hence
\begin{equation*}
\A\Pi f=\A\Pi_{\S} f,
\quad
\A^2\Pi f=\A^2\Pi_{\S} f.
\end{equation*}

Therefore, in the computation of $\A^2\Pi f$, we decompose $\A^2\Pi f$ into the two terms obtained in the proof of Lemma \ref{lemma:4.19}, and apply the orthogonal projection in Definition \ref{def:ortho-projection} to each term separately. Note that $\nabla_{\xi} f_{\S}(\xi)$ and $\nabla_{\xi}^2 f_{\S}(\xi)$ are independent of the rotation variables $\alpha,\beta,\gamma$. 
%\improvement{Lemma 3.8, $\Pi_S TI$}

\begin{itemize}
    \item[\textbf{TI.}] Let us first compute the projection onto the first term TI in \eqref{termI-} as follows
\begin{equation}
    \begin{aligned}
       & \Pi_{\S} (\text{TI}) = \Pi_{\S} (v_{3}^{\top} \, \nabla_{\xi}^2 f_{\S} \, v_{3})
 = \frac{1}{3} \, \Delta_{\xi} f_{\S}, 
    \end{aligned}
\end{equation}

where the last expression was obtained using $M= \nabla_{\xi}^2 f_{\S}$ in Lemma \ref{lem:Auxiliary_Lemma_Computations}.

 \item[\textbf{TII.}]  For the term TII \eqref{termII-}, we apply the orthogonal projection \eqref{eq:projection_SE(3)} to Equation \eqref{eq:AAPf_term_SE(3)}. Note that the only terms that depend on the Euler angles are the vectors $v_{1}$ and $v_{2}$.~Hence,

\begin{equation*}\label{pi_v3_otimes_v3}
    \begin{aligned}
        \Pi_{\S} (\text{TII}) %&= \Pi_{\S} \left( - \frac{1}{2}   (\nabla_{\xi} \Phi)^{\top} \, (I - v_{3} \otimes v_{3}) \, (\nabla_{\xi} f_{\S})   \right)   \\
        %&= - \frac{1}{2}   (\nabla_{\xi} \Phi)^{\top} \, \Pi_{\S} \left( I - v_{3} \otimes v_{3} \right) \, (\nabla_{\xi} f_{\S}),  
        &= - \frac{1}{2}   (\nabla_{\xi} \Phi)^{\top} \, \left( \int_{\mathrm{SO}(3)} \left( I - v_{3} \otimes v_{3} \right) \,\, d\nu_3  \right) \, (\nabla_{\xi} f_{\S}) \\
        &= - \frac{1}{2}   (\nabla_{\xi} \Phi)^{\top} \, \left( \frac{2}{3} I \right) \, (\nabla_{\xi} f_{\S}).
    \end{aligned}
\end{equation*}

This follows from computations similar to those in the proof of Lemma \ref{lem:Auxiliary_Lemma_Computations}, see \eqref{integral-SO(3)}. Therefore,

\begin{equation*}
    \begin{aligned}
        \Pi_{\S} (\text{TII}) 
        %&= -\frac{1}{2}  (\nabla_{\xi} \Phi)^{\top} \, \left( \frac{2}{3} I \right)  \, (\nabla_{\xi} f_{\S}) 
        %= -\frac{1}{3} \, \nabla_{\xi} \Phi  \cdot \nabla_{\xi} f_{\S}.
        = -\frac{1}{3} (\nabla_{\xi} \Phi)^{\top}  \cdot (\nabla_{\xi} f_{\S}).
    \end{aligned}
\end{equation*}

\end{itemize}

Combining the above expressions, we have 

\begin{equation*}
    \Pi_{\S} \A^2 \Pi f = \frac{1}{3} \left( \Delta_{\xi} f_{\S}  - \nabla_{\xi} \Phi \cdot \nabla_{\xi} f_{\S} \right).
\end{equation*}

To complete the proof, we show that for all $f \in \D$, $(\Pi_{\S} \A^2 \Pi f,1)_{\Hi}=0$.~Using the integrant independence of the rotation variable, we obtain the following result
\begin{equation*}
    \begin{aligned}
      &(\Pi_{\S} \A^2 \Pi f,1)_{\Hi} = \int_{\SE(3)} \frac{1}{3} \left( \Delta_{\xi} f_{\S}  - \nabla_{\xi} \Phi \cdot \nabla_{\xi} f_{\S} \right) \,   d \mu_\Phi \\
      &=  \int_{\mathbb{R}^3} \frac{1}{3} \left( \Delta_{\xi} f_{\S}  \right) \,   \,\, d \gamma_\Phi -  \int_{\mathbb{R}^3} \frac{1}{3} \left( \nabla_{\xi} \Phi \cdot \nabla_{\xi} f_{\S} \right)  \,\, d \gamma_\Phi\\
      &=  Z(\Phi)^{-1} \int_{\mathbb{R}^3} \frac{1}{3} \left( \nabla_{\xi} \Phi \cdot \nabla_{\xi} f_{\S} \right)  \,\, e^{-\Phi} d \xi -  Z(\Phi)^{-1}  \int_{\mathbb{R}^3} \frac{1}{3} \left( \nabla_{\xi} \Phi \cdot \nabla_{\xi} f_{\S} \right)  \,\, e^{-\Phi} d \xi=0. 
    \end{aligned}
\end{equation*}

Therefore, we conclude that for all $f \in \D$
\begin{equation*}
\begin{aligned}
Gf = \Pi \A^2 \Pi f &=  \frac{1}{3} \left( \Delta_{\xi} \, f_{\S}  - \nabla_{\xi} \Phi \cdot \nabla_{\xi} \, f_{\S}\right).
\end{aligned}
 \qedhere
\end{equation*}

\end{proof}

The following proposition is a special case of \cite[Proposition 3.13]{GROT-STIL-2014}.

\begin{proposition}\label{G-mdissipative}
Let condition \textnormal{\textbf{(C1)}} be fulfilled. Then $(\I - \Pi \A^2 \Pi)(\mathcal{D})$ is dense in $\Hi$.~Consequently, $(G,\mathcal{D})$ is essentially m-dissipative in $\Hi$.
\end{proposition}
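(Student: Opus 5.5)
We need to show that $(I-G)(\D)$ is dense in $\Hi$. We reduce to an essential self-adjointness statement for the weighted Laplacian on $\mathbb{R}^3$ and split $\Hi$ orthogonally.

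\emph{Step 1: orthogonal splitting.} Write $\Hi=\Pi_{\S}(\Hi)\oplus(\I-\Pi_{\S})(\Hi)$. By Proposition~\ref{prop:auxiliary1-assump(H3)}, for $f\in\D$ we have
\begin{equation*}
(\I-G)f=(f-f_{\S})+\Bigl(f_{\S}-\tfrac13\bigl(\Delta_\xi f_{\S}-\nabla_\xi\Phi\cdot\nabla_\xi f_{\S}\bigr)\Bigr).
\end{equation*}
The first summand lies in $(\I-\Pi_{\S})(\Hi)$ and the second in $\Pi_{\S}(\Hi)\cong L^2(\mathbb{R}^3,\gamma_\Phi)$.

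\emph{Step 2: reduction to the two components.} Let $g\in\Hi$ with $((\I-G)f,g)_{\Hi}=0$ for all $f\in\D$. We show $g=0$.
\begin{itemize}
\item[(a)] Take $f=h$ with $h\in C_c^\infty(\mathbb{R}^3)$, viewed as a rotation-independent element of $\D$. Then $f=f_{\S}=h$, so $g_{\S}:=\Pi_{\S}g$ is orthogonal in $L^2(\gamma_\Phi)$ to $(\I-\tfrac13 L_\Phi)h$ for all $h\in C_c^\infty(\mathbb{R}^3)$. Here $L_\Phi:=\Delta_\xi-\nabla_\xi\Phi\cdot\nabla_\xi$.
\item[(b)] Take $f=\psi(\xi)\chi(R)$ with $\int_{\SO(3)}\chi\,d\nu_3=0$. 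Then $f_{\S}=0$ and $(\I-G)f=f$. Such products span a dense subset of $(\I-\Pi_{\S})(\Hi)$, by density of $C_c^\infty(\mathbb{R}^3)\otimes C^\infty(\SO(3))$ in $\Hi$ followed by the bounded projection $\I-\Pi_{\S}$. Hence $(\I-\Pi_{\S})g=0$.
\end{itemize}
This leaves showing that $(\I-\tfrac13L_\Phi)(C_c^\infty(\mathbb{R}^3))$ is dense in $L^2(\mathbb{R}^3,\gamma_\Phi)$. Equivalently, $(L_\Phi,C_c^\infty(\mathbb{R}^3))$ is essentially self-adjoint there. Rescaling by $3$ and using $(\I-\tfrac13L_\Phi)=\tfrac13(3\I-L_\Phi)$, it suffices that $(\lambda\I-L_\Phi)(C_c^\infty)$ is dense for some $\lambda>0$, which is equivalent to the case $\lambda=1$ by m-dissipativity theory.

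\emph{Step 3: essential self-adjointness of $L_\Phi$.} This is the main step. $L_\Phi$ is symmetric and nonpositive on $L^2(\gamma_\Phi)$ by integration by parts against $e^{-\Phi}$. For $\Phi\in C^\infty$, essential self-adjointness on $C_c^\infty(\mathbb{R}^d)$ is the classical result of Wielens (and Bogachev--Krylov--Röckner) for locally Lipschitz $\Phi$ on complete $\mathbb{R}^d$. This is exactly the content of \cite[Proposition 3.13]{GROT-STIL-2014}, which we invoke.

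For a self-contained route, mimic the proof of Theorem~\ref{L-is-m-dissipative}:
\begin{itemize}
\item[(i)] Let $u\in L^2(\gamma_\Phi)$ satisfy $((3-L_\Phi)h,u)=0$ for all $h$. Then $ue^{-\Phi}$ is a distributional solution of an elliptic equation with smooth coefficients, so $u\in C^\infty$ by elliptic regularity.
\item[(ii)] Test with $h=\varphi_n^2u$ for cut-offs $\varphi_n$ with $|\nabla\varphi_n|\le C/n$. This gives
\begin{equation*}
3\|\varphi_nu\|^2+\|\varphi_n\nabla u\|^2\le 2\bigl|(\varphi_n\nabla u,\,u\nabla\varphi_n)\bigr|.
\end{equation*}
\item[(iii)] The right-hand side is at most $\tfrac12\|\varphi_n\nabla u\|^2+2(C/n)^2\|u\|^2$. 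Letting $n\to\infty$ yields $u=0$.
\end{itemize}
The only delicate point is that no growth condition on $\Phi$ is needed, since the cut-off gradients are uniformly small. Condition (C1) alone suffices, matching the hypothesis of the statement.

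Combining Steps 1–3 gives $g_{\S}=0$ and hence $g=0$, so $(\I-G)(\D)$ is dense. Since $G$ is dissipative on $\D$ ($(Gf,f)=-\|\A\Pi f\|^2\le0$), Lumer–Phillips gives essential m-dissipativity.
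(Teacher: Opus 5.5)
Your proof is correct and follows the route the paper takes. The paper supplies no argument of its own and simply calls the statement a special case of Proposition 3.13 of Grothaus--Stilgenbauer. That result rests on the reduction you carry out: the splitting $\Hi=\Pi_{\S}(\Hi)\oplus(\I-\Pi_{\S})(\Hi)$ together with $Gf=\frac13(\Delta_\xi f_{\S}-\nabla_\xi\Phi\cdot\nabla_\xi f_{\S})$ reduces the claim to essential self-adjointness of the weighted Laplacian on $C_c^\infty(\mathbb{R}^3)$ in $L^2(\gamma_\Phi)$. Your self-contained cut-off argument (smoothness of an orthogonal element $u$ by elliptic regularity, then testing with $\varphi_n^2u$) is a valid Wielens-type proof of that last step and fills in what the paper leaves to the citation.
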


We can now prove Assumption \textnormal{\textbf{(H3)}}.

%There exists $\Lambda_m >0$ such that 

\begin{proposition}[Assumption \textbf{(H3)}, Macroscopic coercivity]\label{Macros_coerci}
%Let $\Phi: \mathbb{R}^3 \to \mathbb{R}$ satisfies conditions \textnormal{\textbf{(C1)}}, \textnormal{\textbf{(C2)}}, and 
There exists $\Lambda_M>0$ such that
\begin{equation}
    \| \A \Pi f \|_{\Hi}^2 \geq \Lambda_M \|\Pi f\|_{\Hi}^2 \quad \text{for all} \, f \in D((\A \Pi)^*(\A \Pi)).
\end{equation}

In particular, $\Lambda_M=\frac{\Lambda}{3}$.~
\end{proposition}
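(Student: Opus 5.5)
The plan is to first prove the inequality for all $f\in\D$ and then pass to $D((\A\Pi)^*(\A\Pi))$ via Corollary~\ref{cor:cor2.15P}. By Proposition~\ref{G-mdissipative}, $(G,\D)$ is essentially m-dissipative, so only the estimate on the core remains.

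Fix $f\in\D$ and write $f_{\S}=\Pi_{\S}f\in C_c^\infty(\mathbb{R}^3)$. By Lemma~\ref{lemma:projection-domain}, $\A\Pi f=-v_3\cdot\nabla_\xi f_{\S}$. Since $\nabla_\xi f_{\S}$ does not depend on the rotation variables, Fubini gives
\begin{equation*}
\|\A\Pi f\|_{\Hi}^2=\int_{\mathbb{R}^3}\int_{\SO(3)}(\nabla_\xi f_{\S})^{\top}(v_3\otimes v_3)\nabla_\xi f_{\S}\,d\nu_3\,d\gamma_\Phi .
\end{equation*}
Lemma~\ref{lem:Auxiliary_Lemma_Computations}, applied with $M=\nabla_\xi f_{\S}\otimes\nabla_\xi f_{\S}$, evaluates the inner integral as $\frac13|\nabla_\xi f_{\S}|^2$. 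Hence $\|\A\Pi f\|_{\Hi}^2=\frac13\|\nabla_\xi f_{\S}\|^2_{L^2(\gamma_\Phi)}$.

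Next, $\Pi f=f_{\S}-(f,1)_{\Hi}$ and $(f,1)_{\Hi}=(f_{\S},1)_{L^2(\gamma_\Phi)}$, because $\nu_3$ is a probability measure. Since $\Pi f$ is independent of the rotation variables, $\|\Pi f\|_{\Hi}^2=\|f_{\S}-(f_{\S},1)_{L^2(\gamma_\Phi)}\|^2_{L^2(\gamma_\Phi)}$. Applying the Poincaré inequality \textbf{(C2)} to $f_{\S}\in C_c^\infty(\mathbb{R}^3)$ yields
\begin{equation*}
\|\A\Pi f\|_{\Hi}^2=\tfrac13\|\nabla_\xi f_{\S}\|^2_{L^2(\gamma_\Phi)}\geq\tfrac{\Lambda}{3}\|\Pi f\|_{\Hi}^2 ,
\end{equation*}
which is the claim on $\D$ with $\Lambda_M=\Lambda/3$.

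Finally, Corollary~\ref{cor:cor2.15P} extends the inequality from $\D$ to $D((\A\Pi)^*(\A\Pi))$, using the essential m-dissipativity of $(G,\D)$ from Proposition~\ref{G-mdissipative}. The main obstacle is not the algebra but justifying this extension, which we take from \cite{GROT-STIL-2014} as stated. The only computational point to check is the second-moment identity $\int_{\SO(3)}v_3\otimes v_3\,d\nu_3=\frac13 I$, which is already recorded in \eqref{integral-SO(3)}.
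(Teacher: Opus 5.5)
Your proof is correct and follows the paper's argument: you establish $\|\A\Pi f\|_{\Hi}^2=\frac13\|\nabla_\xi f_{\S}\|^2_{L^2(\gamma_\Phi)}$ on $\D$, apply the Poincar\'e inequality \textbf{(C2)} to $f_{\S}$, and extend to $D((\A\Pi)^*(\A\Pi))$ via Corollary~\ref{cor:cor2.15P} together with the essential m-dissipativity of $(G,\D)$. The only cosmetic difference is that you obtain the gradient identity directly from $\int_{\SO(3)}v_3\otimes v_3\,d\nu_3=\frac13 I$, whereas the paper writes $\|\A\Pi f\|_{\Hi}^2=-(Gf,f)_{\Hi}$ and integrates by parts against $\gamma_\Phi$ using the explicit formula for $G$.
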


\begin{proof}
Let $f \in \D$.~Using Equation \eqref{eq:def-operator-G}, an integration by parts with respect to the probability measure $\gamma_{\Phi}$ together with its Poincaré inequality from \textnormal{\textbf{(C2)}} yields:

\begin{equation*}
    \begin{aligned}
        \|\A \Pi f\|_{\Hi}^2 &= -(G f,f)_{\Hi} =  \frac{1}{3}  \int_{\mathbb{R}^3}\left|\nabla_{\xi} f_{\S} \right|^2 \,\, d \gamma_{\Phi} \\
        %&= Z(\Phi)^{-1} \int_{\mathbb{R}^3} \int_{\SO(3)} (\nabla_{\xi} g)^{\top} \cdot \left(v_{3} \otimes v_{3}\right) \cdot (\nabla_{\xi} g)  \,\, \nu_3 \,\, e^{-\Phi(\xi)} d \xi
        \\
        %&= Z(\Phi)^{-1} \int_{\mathbb{R}^3} (\nabla_{\xi} g)^{\top}\left(\int_{ \SO(3)} v_{3} \otimes v_{3} \,\, \nu_3 \right) (\nabla_{\xi} g)  \,\, e^{-\Phi(\xi)} d \xi  \\
        %&= \int_{\SE(3)}  \left( v_{3} \cdot \nabla_{\xi} g \right)^2  Z(\Phi)^{-1}  e^{-\Phi(\xi)} d \xi \otimes \mu_{\SO(3)}\\
        %&= Z(\Phi)^{-1} \int_{\mathbb{R}^3} \left( \frac{1}{8 \pi^2} \int_{\SO(3)} v_{3} \otimes v_{3}  \, \mu_{\SO(3)} \right) \quad |\nabla_{\xi} g|^2 e^{-\Phi(\xi)} d \xi = \frac{1}{3}  Z(\Phi)^{-1} \int_{\mathbb{R}^3}  |\nabla_{\xi} g|^2  e^{-\Phi(\xi)} d \xi\\
        & \overset{\textbf{(C2)}}{\geq} \frac{\Lambda}{3}  \int_{\mathbb{R}^3} \left( f_{\S} -  \int_{\mathbb{R}^3} f_{\S} \,\,  d \gamma_{\Phi} \ \right)^2 \,\,  d \gamma_{\Phi} = \frac{\Lambda}{3} \, \|\Pi f\|_{\Hi}^2.
    \end{aligned}
\end{equation*}

By Proposition \ref{G-mdissipative}, the operator $(G, \D)$ is essentially m-dissipative. Therefore, Corollary \ref{cor:cor2.15P} applies and extends the estimate from the core $\D$ to $D\bigl((\A \Pi)^* \A \Pi\bigr)$.~ Hence, Assumption \textnormal{(H3)} follows with $\Lambda_M=\frac{\Lambda}{3}$.~
\end{proof}

To verify Assumption \textnormal{\textbf{(H4)}}, we treat the two auxiliary operator bounds separately. For the first bound, we apply Lemma~\ref{auxiliary_lemma_1}, following the abstract hypocoercivity method taking into account domain issues developed in \cite{GROT-STIL-2014}. For the second bound, we use an elliptic regularity estimate from \cite{DMS15}, which relies on conditions \textnormal{\textbf{(C1)}}--\textnormal{\textbf{(C3)}}.

\begin{proposition}[Assumption \textbf{(H4)}]\label{auxiliary_lemma+assumption(H4)_SE(3)}

The auxiliary operator bounds in \textnormal{\textbf{(H4)}} hold with
\begin{equation*}
c_1=\frac{1}{2}|c_3|=\frac{\sigma^2}{2},
\quad
c_2=c_\Phi,
\end{equation*}
where $c_3 = - (3-1)\frac{\sigma^2}{2}$ arises from the Laplace--Beltrami eigenvalue of the components of $v_3$, and $c_\Phi\in[0,\infty)$ depends on the potential $\Phi$ only.~
\end{proposition}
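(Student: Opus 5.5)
The proof splits into the two bounds of \textnormal{\textbf{(H4)}}: Lemma~\ref{auxiliary_lemma_1} handles the first, Proposition~\ref{auxiliary-prop2.17} the second.

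\textbf{First bound.} We verify the hypotheses of Lemma~\ref{auxiliary_lemma_1}. Condition \textnormal{\textbf{(H1)}} holds by Proposition~\ref{prop:H1}. Since $\S$ maps $\D$ into $\D\subset D(\A)$, we have $\S(\D)\subset D(\A)$. It remains to show $\P\A\S=c_3\P\A$ on $\D$, which we do by duality. By antisymmetry of $\A$, symmetry of $\S$, and $\S\P=0$, for $f,g\in\D$,
\begin{equation*}
(\P\A\S f,g)_{\Hi}=-(\S f,\A\P g)_{\Hi}=-(f,\S\A\P g)_{\Hi}.
\end{equation*}
By Lemma~\ref{lemma:projection-domain}, $\A\P g=-v_3\cdot\nabla_\xi g_{\S}$. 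Here $g_{\S}$ is independent of $R$, and the components of $v_3$, namely $s_\alpha s_\beta$, $-c_\alpha s_\beta$, $c_\beta$, are first-order harmonics on $\SO(3)$. Hence
\begin{equation*}
\Delta_{\SO(3)}v_{3,i}=-\lambda_1 v_{3,i}=-2v_{3,i}.
\end{equation*}
We check this either directly from \eqref{eq:vector_fields_on_SE(3)} using Lemma~\ref{lemma_2_important} (which gives $\widetilde X_1^2v_3=-v_3$, $\widetilde X_2^2 v_3=-v_3$, $\widetilde X_3 v_3=0$), or via the eigenvalue $\lambda_1=2$ quoted from \cite{Lauret_2018}. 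It follows that
\begin{equation*}
\S\A\P g=-\frac{\sigma^2}{2}\cdot 2\,\A\P g=-(3-1)\frac{\sigma^2}{2}\A\P g,
\end{equation*}
and therefore
\begin{equation*}
(\P\A\S f,g)_{\Hi}=c_3(f,\A\P g)_{\Hi}=c_3(\P\A f,g)_{\Hi}
\end{equation*}
after one more use of antisymmetry. Since $\D$ is dense, this gives the identity, with $c_1=\frac{|c_3|}{2}=\frac{\sigma^2}{2}$.

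\textbf{Second bound.} By Proposition~\ref{G-mdissipative}, $(G,\D)$ is essentially m-dissipative. By Proposition~\ref{auxiliary-prop2.17}, it therefore suffices to show
\begin{equation*}
\|\A^2\P f\|_{\Hi}\le c_\Phi\|g\|_{\Hi}\quad\text{for } g=(\I-G)f,\ f\in\D.
\end{equation*}
Lemma~\ref{lemma:4.19} gives $\A^2\P f$ pointwise. Using $|v_3|=1$ and integrating over $\SO(3)$,
\begin{equation*}
\|\A^2\P f\|_{\Hi}^2\le 2\|\nabla^2_\xi f_{\S}\|^2_{L^2(\gamma_\Phi)}+\frac12\||\nabla_\xi\Phi|\,|\nabla_\xi f_{\S}|\|^2_{L^2(\gamma_\Phi)}.
\end{equation*}
By Proposition~\ref{prop:auxiliary1-assump(H3)}, $h:=f_{\S}-(f,1)_{\Hi}$ solves the elliptic equation
\begin{equation*}
h-\tfrac13(\Delta_\xi h-\nabla_\xi\Phi\cdot\nabla_\xi h)=g_{\S},
\end{equation*}
where $g_{\S}$ is the rotation-independent part of $g$, with $\|g_{\S}\|\le\|g\|_{\Hi}$. (The constant part of $g$ is handled trivially.) After rescaling by the factor $\frac13$, the elliptic regularity estimate of \cite{DMS15} (in the form of \cite[Lemma/Prop.~on $H^2$ estimates]{GROT-STIL-2014}), which holds under \textnormal{\textbf{(C1)}}--\textnormal{\textbf{(C3)}}, bounds $\|\nabla^2_\xi h\|_{L^2(\gamma_\Phi)}$ and $\||\nabla_\xi\Phi|\nabla_\xi h\|_{L^2(\gamma_\Phi)}$ by $C\|g_{\S}\|_{L^2(\gamma_\Phi)}$. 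This yields $c_2=c_\Phi$, depending only on $\Phi$.

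\textbf{Main obstacle.} The hard part is the elliptic estimate. Its mechanism is the Bochner identity for $\Delta_\xi-\nabla_\xi\Phi\cdot\nabla_\xi$: testing the equation against the Witten Laplacian term produces the Hessian norm plus a term $\int\nabla h^\top\nabla^2\Phi\nabla h$. That term is controlled via \textnormal{\textbf{(C3)}} together with the weighted bound $\int|\nabla\Phi|^2|\nabla h|^2\,d\gamma_\Phi\lesssim\|\nabla^2h\|^2+\|\nabla h\|^2$, which itself needs \textnormal{\textbf{(C2)}}. I would cite rather than reprove this, checking only that the constant $\frac13$ and dimension $3$ fit its hypotheses.
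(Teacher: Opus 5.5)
Your proposal is correct and follows essentially the same route as the paper. The first bound goes through Lemma~\ref{auxiliary_lemma_1} using $\Delta_{\SO(3)}v_3=-2v_3$, so $c_3=-\sigma^2$ and $c_1=\sigma^2/2$. The second goes through Proposition~\ref{auxiliary-prop2.17}, the explicit formula of Lemma~\ref{lemma:4.19}, and the elliptic a priori estimate of \cite{DMS15} for $\P f-\frac13(\Delta_\xi-\nabla_\xi\Phi\cdot\nabla_\xi)\P f=\P g$.

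The only blemish is a harmless sign slip in your duality chain. Its middle term should read $-c_3(f,\A\P g)_{\Hi}$, which by antisymmetry equals $c_3(\P\A f,g)_{\Hi}$, so the identity $\P\A\S=c_3\P\A$ on $\D$ holds as you claim.
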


\begin{proof}

%\begin{enumerate}
    %\item 
    (i) To prove the first inequality in \textnormal{\textbf{(H4)}},
    see Equation~\eqref{aucxill}, we apply
    Lemma~\ref{auxiliary_lemma_1}. Thus, it is enough to
    determine a constant $c_3\in\mathbb{R}$ such that
    \begin{equation*}
        (\S h,\A\Pi f)_{\Hi}
        =
        c_3(h,\A\Pi f)_{\Hi},
        \quad f,h\in\D.
    \end{equation*} 
    
Using the explicit expression of the Laplace--Beltrami operator on
$\SO(3)$, a direct computation gives
\begin{equation}\label{Laplacian-vector-v3}
\begin{aligned}
\Delta_{\SO(3)}(s_\alpha s_\beta)
    &=-(3-1)s_\alpha s_\beta,\\
\Delta_{\SO(3)}(c_\alpha s_\beta)
    &=-(3-1)c_\alpha s_\beta,\\
\Delta_{\SO(3)}(c_\beta)
    &=-(3-1)c_\beta.
\end{aligned}
\end{equation}

Equivalently, since $v_3=(s_\alpha s_\beta,-c_\alpha s_\beta,c_\beta)^\top$,
Equation~\eqref{Laplacian-vector-v3} can be written as
\begin{equation*}
    \Delta_{\SO(3)}v_3=-(3-1)v_3.
\end{equation*}

We apply the identities above to evaluate $(\S h,\A\Pi f)_{\Hi}$.~ Let $f,h \in \D$, then
\begin{equation*}
    \begin{aligned}
        &(\S h, \A \Pi f )_{\Hi} %= \int_{\SE(3)} \S h \, (\A \Pi f) \, d \mu_\Phi\\
        =\int_{\SE(3)} \frac{\sigma^2}{2} \Delta_{\SO(3)} h \, \Big(s_\alpha s_\beta \nabla_{\xi_1} f_{\S} -c_\alpha s_\beta \nabla_{\xi_2} f_{\S} + c_\beta \nabla_{\xi_3} f_{\S} \Big) \, d \mu_\Phi\\
        &= - \frac{\sigma^2}{2}  \int_{\SE(3)} \nabla_{\SO(3)}  h \cdot \nabla_{\SO(3)} \Big(  s_\alpha s_\beta \nabla_{\xi_1} f_{\S} -c_\alpha s_\beta \nabla_{\xi_2} f_{\S} + c_\beta \nabla_{\xi_3} f_{\S} \Big) \, d \mu_\Phi\\
        %&=  \frac{\sigma^2}{2}  \int_{\SE(3)}  h \, \Delta_{\SO(3)} \Big(  s_\alpha s_\beta \nabla_{\xi_1} f_{\S} -c_\alpha s_\beta \nabla_{\xi_2} f_{\S} + c_\beta \nabla_{\xi_3} f_{\S} \Big) \, d \mu_\Phi \\
        &=  \frac{\sigma^2}{2}  \int_{\SE(3)}  h \, \Big( \Delta_{\SO(3)} (  s_\alpha s_\beta )\nabla_{\xi_1} f_{\S} - \Delta_{\SO(3)} (c_\alpha s_\beta
        )\nabla_{\xi_2} f_{\S} + \Delta_{\SO(3)}  (c_\beta )\nabla_{\xi_3} f_{\S} \Big) \, d \mu_\Phi \\
         %&=  \frac{\sigma^2}{2} (-2)  \int_{\SE(3)} h \,  \Big(  s_\alpha s_\beta \nabla_{\xi_1} f_{\S} -  c_\alpha s_\beta
        %\nabla_{\xi_2} f_{\S} +   c_\beta \nabla_{\xi_3} f_{\S} \Big) \, d \mu_\Phi\\ %= - \sigma^2 (h, \A \Pi f)_{\Hi},
        &=-(3-1)\frac{\sigma^2}{2}(h,\A\Pi f)_{\Hi}.
   \end{aligned}
\end{equation*}

Hence, $c_3=-(3-1)\frac{\sigma^2}{2}$, and Lemma~\ref{auxiliary_lemma_1} yields $c_1=\frac12|c_3|=\frac{\sigma^2}{2}$.~\\

(ii) For the second inequality in \textnormal{\textbf{(H4)}}, we need elliptic regularity, which is available in \cite{DMS15}. Let $h\in \Hi$ be of the form $h= (\I-\P \A^2 \P)f$ for some $f \in \D$, and $g=\Pi_{\S} f$.~From Equation \eqref{2.22_SE(3)}, we obtain 
\begin{equation}
    \begin{aligned}
        \|\A^2 \P f \|_{\Hi} & = \left\|
        v_{3}^{\top} \, \nabla_{\xi}^2 f_{\S} \,  v_{3}  - \frac{1}{2}  \left( (\nabla_{\xi} \Phi)^{\top} \, (I - v_{3} \otimes v_{3}) \, (\nabla_{\xi} f_{\S} )   \right) \right\|_{\Hi}\\
        &\leq \left\| v_{3}^{\top} \, \nabla_{\xi}^2 f_{\S} \,  v_{3} \right\|_{\Hi} 
        + \frac{1}{2}  \left\|
         \left( (\nabla_{\xi} \Phi)^{\top} \, (I - v_{3} \otimes v_{3}) \, (\nabla_{\xi} f_{\S})   \right)
        \right\|_{\Hi}\\
        & \leq \| \, |\nabla_{\xi}^2 f_{\S} | \, \|_{L^2(\gamma_\Phi)} + \frac{1}{2}  \|
          \, |\nabla_{\xi} \Phi|  \, |(\nabla_{\xi} f_{\S})| \, 
        \|_{L^2(\gamma_\Phi)}, 
    \end{aligned}
\end{equation}
where $\Pi f = f_{\S} - (f,1)_{L^2(\gamma_\Phi)}$.~Due to the definition of the operator $(G, \D)$, we have that $\Pi f$ solves the equation
\begin{equation*}
    \Pi f - \frac{1}{3} \left( \Delta_\xi \Pi f - \nabla_{\xi} \Phi \cdot \nabla_{\xi}  \Pi f\right) = \Pi h \quad \text{in } \quad L^2(\gamma_\Phi). 
\end{equation*}
Applying the elliptic a priori estimates, we conclude 
\begin{equation*}
    \begin{aligned}
        \|\A^2 \Pi f\|_{\Hi} \leq c_\Phi \| \Pi h \|_{L^2(\gamma_\Phi)}   \leq c_\Phi  \| h\|_{\Hi}, 
    \end{aligned}
\end{equation*}
for some $c_{\Phi} \in (0, \infty)$.~The constant $c_{\Phi}$ depends only on the constants appearing in the elliptic regularity estimate, namely the Poincaré constant and the bounds on the Hessian of $\Phi$.~Therefore, applying Proposition \ref{auxiliary-prop2.17} we have Assumption \textnormal{\textbf{(H4)}}.
%\end{enumerate}
\end{proof}

\begin{theorem}[Hypocoercivity on $\SE(3)$]\label{theo:hypocoer_SE(3)}
Let $\sigma \in (0, \infty)$, and assume that the potential $\Phi:\mathbb R^3\to\mathbb R$ satisfies conditions \textnormal{\textbf{(C1)}}--\textnormal{\textbf{(C3)}}.~Let $(T_t)_{t\geq 0}$ be the strongly continuous contraction semigroup on $\Hi=L^2(\SE(3),\mu_\Phi)$ generated by the closure of $(\L, \D)$.~Then there exist constants $\kappa_1,\kappa_2 \in (0, \infty)$, depending only on the constants appearing in \textnormal{\textbf{(C1)}}--\textnormal{\textbf{(C3)}} and on $\sigma$, such that for every $f\in \Hi$,
\begin{equation}\label{eq:hypocoercive-exponential-decay}
\left\|T_t f-(f,1)_{\Hi} \right\|_{\Hi}
\leq
\kappa_1 e^{-\kappa_2 t}
\left\|f-(f,1)_{\Hi} \right\|_{\Hi},
\quad t\geq 0.
\end{equation}
\end{theorem}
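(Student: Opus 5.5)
The proof will assemble the pieces already established and feed them into the abstract hypocoercivity Theorem~\ref{Hypocoercive_them_for_SE(3)}. I will first check the data conditions \textbf{(D1)}--\textbf{(D7)}, then recall the verified assumptions \textbf{(H1)}--\textbf{(H4)} with their explicit constants, and finally read off the constants $\kappa_1,\kappa_2$.

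For the data, I take $(E,\mathcal F,\mu)=(\SE(3),\Bor(\SE(3)),\mu_\Phi)$. This is a probability space by Definition~\ref{def:measure_SE(3)} and \textbf{(C1)}, which gives \textbf{(D1)}. Theorem~\ref{L-is-m-dissipative} gives \textbf{(D2)}: the closure $(\L,D(\L))$ generates a strongly continuous contraction semigroup $(T_t)_{t\ge0}$. It also gives \textbf{(D3)}, since $\D=C_c^\infty(\SE(3))$ is dense in $\Hi$ and, by definition of the closure, a core. For \textbf{(D4)} I use $\L|_\D=\S-\A$ with $(\S,D(\S))$ and $(\A,D(\A))$ the closures from Lemma~\ref{lemma:lemma2.25,SE(3)}, which are symmetric and antisymmetric respectively. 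Lemma~\ref{lemma:projection-domain} supplies \textbf{(D5)} with $\P$ and $\P_\S$ from Definition~\ref{def:ortho-projection}. Lemma~\ref{lemma:lemma2.25,SE(3)}(iii) gives \textbf{(D6)}, and Lemma~\ref{lemma:4.20} gives \textbf{(D7)}. One small point to record: $\P_\S$ as defined in \textbf{(D5)}, namely $\P f+(f,1)_\Hi$, coincides with the fiber average in Definition~\ref{def:ortho-projection}, so the two notations agree.

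For the assumptions, \textbf{(H1)} is Proposition~\ref{prop:H1}. \textbf{(H2)} is Proposition~\ref{Assumption-H2} with $\Lambda_m=\sigma^2$. \textbf{(H3)} is Proposition~\ref{Macros_coerci} with $\Lambda_M=\Lambda/3$; its extension from $\D$ to $D((\A\P)^*\A\P)$ comes from Corollary~\ref{cor:cor2.15P} together with Proposition~\ref{G-mdissipative}. \textbf{(H4)} is Proposition~\ref{auxiliary_lemma+assumption(H4)_SE(3)} with $c_1=\sigma^2/2$ and $c_2=c_\Phi$. Here I should remark that Proposition~\ref{auxiliary-prop2.17} gives the bound with $\P_\#=\P$ or $\P_\S$. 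The same holds for Lemma~\ref{auxiliary_lemma_1}, since $\B\S$ annihilates $\ker\S\supset\P_\S(\Hi)$ and $\B\A(\I-\P)$ kills constants because $\A1=0$. Theorem~\ref{Hypocoercive_them_for_SE(3)} then yields \eqref{eq:hypocoercive-exponential-decay} for all $f\in\Hi$, with $\kappa_1,\kappa_2$ depending on $\Lambda_m,\Lambda_M,c_1,c_2$, hence only on $\sigma$, $\Lambda$ and $c_\Phi$.

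The main obstacle is the dependence of the constants. In particular, $c_\Phi$ has to depend only on the constants in \textbf{(C1)}--\textbf{(C3)}, that is, on $\Lambda$ and $c$, so that the claim about $\kappa_1,\kappa_2$ is justified. I would handle this by invoking the elliptic a priori estimate of \cite{DMS15} for the equation $u-\frac13(\Delta_\xi u-\nabla_\xi\Phi\cdot\nabla_\xi u)=\P h$. After the rescaling by the factor $\frac13$, that estimate bounds $\||\nabla^2_\xi u|\|_{L^2(\gamma_\Phi)}$ and $\||\nabla_\xi\Phi||\nabla_\xi u|\|_{L^2(\gamma_\Phi)}$ by a constant depending only on $\Lambda$ and $c$, times $\|\P h\|$. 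If needed, I would also state the explicit form of $\kappa_1,\kappa_2$ from \cite{GROT-STIL-2014} in terms of $\Lambda_m,\Lambda_M,c_1,c_2$.
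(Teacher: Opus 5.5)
Your proposal is correct and follows the paper's route. You obtain the semigroup from the essential m-dissipativity theorem, take \textbf{(H1)}--\textbf{(H4)} from the corresponding propositions with $\Lambda_m=\sigma^2$, $\Lambda_M=\Lambda/3$, $c_1=\sigma^2/2$, $c_2=c_\Phi$, and then apply the abstract hypocoercivity theorem. Your explicit check of \textbf{(D1)}--\textbf{(D7)}, your remark on the two choices of $\Pi_\#$, and your tracking of $c_\Phi$ to $\Lambda$ and $c$ via the Dolbeault--Mouhot--Schmeiser elliptic estimates are bookkeeping that the paper's short proof leaves implicit.
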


\begin{proof}
By Theorem \ref{L-is-m-dissipative}, the closure $(L,D(L))$ generates a strongly continuous contraction semigroup $(T_t)_{t\geq0}$ on $\Hi$. Moreover, by Propositions \ref{prop:H1}, \ref{Assumption-H2}, \ref{Macros_coerci}, and \ref{auxiliary_lemma+assumption(H4)_SE(3)}, Assumptions \textnormal{\textbf{(H1)}}--\textnormal{\textbf{(H4)}} of the abstract hypocoercivity theorem are satisfied with $\Lambda_m=\sigma^2, \Lambda_M=\frac{\Lambda}{3}, c_1=\frac{\sigma^2}{2}$ and $c_2=c_{\Phi}$. Hence, the statement follows from Theorem \ref{Hypocoercive_them_for_SE(3)}.
\end{proof}

\begin{remark}\label{rem:final}
\begin{itemize}
    \item[(i)] In Theorem \ref{theo:hypocoer_SE(3)} for each $\kappa_1 > 1$ we can choose  

\begin{equation*}\kappa_2 = \frac{\kappa_1 -1}{\kappa_1} \frac{\sigma^2}{n_1+n_2\sigma^2+n_3\sigma^4},\end{equation*}for suitable constants $n_i>0$, $i=1, \cdots 3$, which depend only on $\Lambda$ and $c_\Phi$. This follows from the quantitative entropy-dissipation argument as in \cite[Section~3.4]{DKMS12}, in the form used in the proof of \cite[Theorem 3.1]{GKMS12}.

\item[(ii)] The assumption $\Phi\in C^\infty(\mathbb{R}^3)$ is only used in the proof of Theorem~\ref{L-is-m-dissipative}, where the essential m-dissipativity of $(\L, \D)$ is established. The proofs of the remaining results require only $\Phi\in C^2(\mathbb{R}^3)$, see \cite[Section 3]{GROT-STIL-2014}. It is therefore natural to expect that the present hypocoercivity result also remains valid under the weaker regularity assumption $\Phi\in C^2(\mathbb{R}^3)$, provided that essential m-dissipativity of $(\L, \D)$ can be established in this setting.
\end{itemize}
\end{remark}
%%%%%%%%%%%%%%%%%%%%%%%%%%%%%%%%%%%%%%%%%%%%%%%%%%%%%%%%%%%%%%%%%5
\medskip
\printbibliography
\addcontentsline{toc}{section}{References}
\AuthorInfo{Martin Grothaus}{Department of Mathematics\\RPTU University Kaiserslautern-Landau, Germany}{grothaus@rptu.de}
\AuthorInfo{Andrea Vanessa Hurtado Quiceno}{Department of Mathematics\\RPTU University Kaiserslautern-Landau, Germany}{avanessa.hquiceno@edu.rptu.de}

\end{document}